\newtheorem{theorem}{Theorem}
\newtheorem{Cor}{Corollary}
\newtheorem{remark}{Remark}
\newtheorem{old thm}{Theorem we need}
\newtheorem{lemma}[theorem]{Lemma}
\newtheorem{step}{Step}
\newcommand{\x}{\textbf{X}}
\newcommand{\h}{Hilb_m}
\begin{document}
\begin{center}
\textbf{The\;Full\;Flag\; Hilbert\; Scheme\; of\; Nodal\; Curves \;and the Punctual Hilbert Scheme of Points of the Cusp Curve. \; }\\[1.5cm]
 \textsc{HWAYOUNG LEE}\\[1cm]

\end{center}

\emph{\small ABSTRACT.} \textsc{\small We study the relative full-flag Hilbert scheme of a family of curves, parameterizing chains of subschemes,  containing a node. We will prove that the relative full flag Hilbert scheme is normal with locally complete intersection singularities.  We also study the Hilbert scheme of points of the cusp curve and show the punctual Hilbert scheme is isomorphic to $\mathbb{P}^1.$  We will see the Hilbert scheme has only one singularty along the punctual one.}  
\section{Introduction}

The Hilbert Scheme parameterizes ideal sheaves or subschemes of projective space or more generally, or a fixed scheme \x.
\begin{theorem}[Grothendieck]
The Hilbert functor is representable for an open subset of projective space.
\end{theorem}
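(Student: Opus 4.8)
The plan is to follow Grothendieck's construction of the Hilbert scheme, reducing first to the universal case. Embedding the ambient variety into some $\mathbb{P}^n$, it suffices to represent, for each numerical polynomial $P$, the functor $\mathrm{Hilb}^P_{\mathbb{P}^n}$ that assigns to a scheme $T$ the set of closed subschemes $Z \subset \mathbb{P}^n_T$ which are flat over $T$ with fibrewise Hilbert polynomial $P$; the full Hilbert functor is then the disjoint union of these over all $P$, and passing to the open subset only restricts to a quasi-projective piece. The key input is a boundedness statement: there is an integer $m_0 = m_0(n,P)$, depending only on $n$ and $P$, such that for every such $Z$ and every $m \geq m_0$ the ideal sheaf $\mathcal{I}_Z$ is $m$-regular in the sense of Castelnuovo--Mumford. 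I would prove this by the usual induction on $n$ via generic hyperplane sections, tracking carefully that the bound on the regularity depends only on the coefficients of $P$.

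Granting $m_0$, fix $m \geq m_0$ and set $N = \binom{n+m}{n} - P(m)$. For every $Z/T$ as above, $\pi_\ast \mathcal{I}_Z(m)$ is then a subbundle of rank $N$ of the trivial bundle with fibre $H^0(\mathbb{P}^n,\mathcal{O}(m))$, its formation commutes with base change, and $\mathcal{I}_Z(m)$ is globally generated, so $Z$ is recovered from this subbundle. This defines an injective natural transformation from $\mathrm{Hilb}^P_{\mathbb{P}^n}$ to the Grassmannian functor $\underline{\mathrm{Gr}}\bigl(N, H^0(\mathbb{P}^n,\mathcal{O}(m))\bigr)$, which is representable by a smooth projective scheme. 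It then remains to identify the image as a representable subfunctor: over the universal subbundle $\mathcal{S}$ on the Grassmannian one forms, for each $\ell \geq 0$, the multiplication map $\mathcal{S} \otimes H^0(\mathcal{O}(\ell)) \to H^0(\mathcal{O}(m+\ell)) \otimes \mathcal{O}$, and imposes that its image have rank exactly $\binom{n+m+\ell}{n} - P(m+\ell)$. Each such condition is locally closed, and by the regularity bound it suffices to impose finitely many of them; a flatness argument (the flattening stratification applied to the graded module generated by $\mathcal{S}$) shows that the resulting locus is in fact closed. Hence $\mathrm{Hilb}^P_{\mathbb{P}^n}$ is represented by a closed subscheme of the Grassmannian, which through its Pl\"ucker embedding is a subscheme of a projective space; over an open subset of $\mathbb{P}^n$ one correspondingly obtains a locally closed, hence open-in-its-closure, piece, as asserted.

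I expect the main obstacle to be the uniform regularity bound: proving that $m_0$ can be chosen independently of the particular subscheme $Z$, since this is exactly what permits a map into a single fixed Grassmannian and is the technical heart of the argument. A secondary point requiring care is upgrading the cutting-out conditions from locally closed to closed, i.e. verifying that a point of the Grassmannian whose chosen subspace multiplies with the expected ranks in the relevant degrees genuinely arises from a flat family with Hilbert polynomial $P$; this is where the flatness criterion for finitely generated graded modules enters, and where one must be sure that checking finitely many degrees forces saturation of the associated ideal.
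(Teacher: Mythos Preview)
Your sketch is the standard Grothendieck argument via uniform Castelnuovo--Mumford regularity and the embedding into a Grassmannian, and it is correct in outline. However, there is nothing to compare it against: the paper does not prove this theorem. It is stated in the introduction as a foundational result attributed to Grothendieck, with no proof given; the paper immediately proceeds to explain the content of the statement (the universal family, the functor of points) and then specializes to the case of Hilbert polynomial a constant $m$. So your proposal is not so much a different route as the only route on offer---the paper simply quotes the result.
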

\noindent The Hilbert  functor $$Hilb^{X}_{P(t)}:(Schemes)\rightarrow (Sets) $$ is defined by $Hilb^{\x}_{P(t)}(S)=\{$flat families  $\;\mathcal{X}\subset \x \times S$ of closed subschemes of \x \;parameterized by S with fibres having Hilbert polynomial $P(t)\}$. The theorem says that there exists a scheme Y representing the functor $Hilb^{\x}_{P(t)}$.  We call this scheme Y a $\text{Hilb}ert\; scheme \; of\; \x\\$ $\;relative\; to\;P(t)$ which is denoted by $\text{Hilb}_{P(t)}(\x)$. In other words, there is a universal family $\mathcal{W} \subseteq \text{Hilb}_{P(t)}(\x)\times \x$: 
$$\left.\begin{array}{ccc}\mathcal{W}& \subseteq & \text{Hilb}_{P(t)}(\x)\times \x \\ \pi\biggl{\downarrow}  &  &\\   \text{Hilb}_{P(t)}(\x)&  & \end{array}\right.$$
\noindent such that for each flat family T of closed subschemes  of $X\times S$ over a scheme S having Hilbert polynomial P(t): $$ \left.\begin{array}{ccc}T & \subseteq & \x\times S \\ \biggl{\downarrow} &  &  \\ S &  & \end{array}\right.$$is the pullback of $\mathcal{W}$ by a unique morphism $\phi$  ($i.e. \;T=\phi^{*}\mathcal{W}$) in the commutative diagram $$\left.\begin{array}{ccc}T & \longrightarrow & \mathcal{W}\\ \biggl{\downarrow} &  &\biggl{\downarrow} \\ S & \longrightarrow_{\phi} & \text{Hilb}_{P(t)}(\x).\end{array}\right.$$   For the \text{Hilb}ert polynomial constant m,  the \text{Hilb}ert scheme denoted by $\text{Hilb}_m(X) $\;parameterizes m points of \x\; or the set of ideals of colength m of $O_{\x}.$  The \text{Hilb}-Chow morphism  is defined by $$ Ch:\text{Hilb}_m(\x)\rightarrow \text{Sym}^m(\x) : Z\rightarrow \sum_{p\in\x}length_p(Z)[p].$$  Since the fibre at the point of a morphism is a scheme,  $Ch^{-1}(mp)=\text{Hilb}_m(O_{\x,p})$  is a scheme parameterizing m points supported only at the point $p\in \x.$   We want to see a local scheme structure at each point $\mathcal{I}\in \text{Hilb}_m(O_{\x,p})$.  Let us consider some open set U of \x containing p with $O_\x(U)=R$ and $O_{\x,p}=R_p$.  Let $\mathcal{A}$ be the category of artinian rings with residue field k. Consider functors of artinian rings
 $$\textrm{Def}_{I,R}, \textrm{Def}_{J,R_p} :\mathcal{A} \rightarrow (Sets)$$ defined by  $\;\textrm{Def}_{I,R}(S)=\{ideal\; I_s<R \otimes_k S|  R\otimes S/I_s\simeq\oplus^m S$\;and\;$I_s\otimes_Sk=I \}$ and $\textrm{Def}_{J,R_p}(S)=\{ideal J_s<\; R_p\otimes_k S| R_p\otimes_k S/J_s=\oplus ^mS$\; and $J_s\otimes_Sk=J \}$ for any $S\in \mathcal{A}.$  For any artinian local ring S with residue field k, there is one to one correspondence between $\textrm{Def}_{\phi^{-1}N,R}(S)$ and $\textrm{Def}_{N,R_p}(S)$ as a set\cite{S} where  $\phi: R\rightarrow R_p$  and an ideal $N$ in $R_p$. \;Since the functor $\textrm{Def}_{I,R}$ is prorepresentable (\cite{S}, p125), so is  $\textrm{Def}_{J,R_p}$. Every functor of artinian local rings with residue field k can be extended to a functor of  the category of complete local rings with residue field k denoted by  $\overline{\mathcal{A}}$. Thus if one want to see a local scheme structure at the point $I\in  \text{Hilb}_m(O_{\x},p)$,  it is enough to deform it in an artinian local rings with a residue field k. In a similar way, the relative \text{Hilb}ert functor of \x/B with \text{Hilb}ert polynomial m  is defined by  $\text{Hilb}^{\x/B}_m(S)=\{closed\; subschemes \;V\subset \x\times_BS\; which\; are\; proper\; and\; flat\; over \;S \;with\; the \;\text{Hilb}ert \;polynomial\; m\}$\; for any scheme S.  This is representable, the scheme representing it is denoted by $\text{Hilb}_m(\x/B).$ 
  We can show that $\text{Hilb}_m(X/B)\subset \text{Hilb}_m(X)$ is a closed scheme.[2]\; Let us consider the fibre of the composition of morphisms as below
$$ \text{Hilb}_m(\x/B)\subset \text{Hilb}_m(\x)\rightarrow Sym^m(\x),$$
then  we can get  $p^{-1}(mp)\cap \text{Hilb}_m(\x/B)$ is a scheme.\\



\section{The full flag Hilbert scheme of points of nodal curves.}

For the family of  0 dimensional subschemes of \x, \;we can consider the Hilb-Chow morphism
\begin{displaymath}
Ch:Hilb_m(\x)\rightarrow Sym^m(\x) : Z\rightarrow \sum_{p\in\x}length_p(Z)[p].
\end{displaymath}

 When \x\; is a smooth curve, $Hilb_m(\x)$ is coincide with  $Sym^m(\x)$. The next simplest case is the nodal curve \x.   To understand $Hilb_m(\x)$,\; it is important to understand the fibre of  $Ch$. When \x\; is a nodal curve, Z. Ran  studied the punctual Hilbert scheme $Hilb_m(O_\x,p)=Ch^{-1}(mp)$ \;for a nodal point p.\cite{R1}  (If you want to see more results about  the Hilb-Chow morphism for the nordal curve , you can see \cite{R2}).  The $punctual\; Hilbert \;scheme$\; $Hilb_m^0(X)$, parameterizing length-m subschemes supported at a node, is a chain  of m-1 $\mathbb{P}^1.$\; The relative Hilbert scheme of \x \;relative to B  denoted by  $Hilb_m(\tilde{\x}/B)$, parameterizing length m-subschemes in the fibre of  a family of $\tilde{X}/ B$ that is the map $\mathbb{A}^2\rightarrow \mathbb{A}^1$ 
given by $xy = t. $\; 
Let $m_{\bullet}=(m,m-1,\ldots,m-k)$ be the sequence of natural numbers of decreasing by one. Then the flag Hilbert scheme $Hilb_{m_{\bullet}}(\x)$ parameterizes the chains of $m_{\bullet}$-subschemes of \x.    The relative flag Hilbert scheme  $Hilb_{m_{\bullet}}(\tilde{\x}/B)$, parameterizing chains of $m_{\bullet}$-subschemes in the fibre of a family of $\tilde{\x}/B$.
$Hilb_{m,m-1}(\tilde{\x}/B)$ is smooth and  $Hilb_{m,m-1,m-2}(\tilde{\x}/B)$ is normal and locally complete intersection, but generally singular.\cite{R1} 
  We will show that  the relative full flag Hilbert scheme denoted by $fHilb_{m_{\bullet}}(\tilde{X}/B)$  for \;$m_{\bullet}=(m,m-1,\ldots,m-k)$ is normal and locally complete intersection as well.
In section 2, we  will review the results for the Hilbert scheme of points of the nodal curve in \cite{R1}, but more focus on computations we will use.   In section 3, we will start from the case $m_{\bullet}=(m, m-1, m-2, m-3)$ and generalized it for the case $m_{\bullet}=(m>m-1>\ldots>m-k)$. Then, we will see that  the relative full flag Hilbert scheme  is normal,  locally complete intersection and generally singular.


\subsection{Results on the Hilbert scheme of points in the nodal curve(in [R])}

This section contains known results that are relevant to our work.
Let $R=\mathbb{C}[[x,y]]/(xy)$ and note that R is isomorphic to the completion of $\mathbb{C}[x,y]/(xy)_{(x,y)}$ with maximal ideal $(x,y).$ 
\begin{theorem}(Ran, \cite{R1})
(i) Every ideal $I<R$ of colength m is of one of the following, said to be of type
 $(c_i^m),( q_i^m)$,respectively: $$I_i^m(a)=(y^i+ax^{m-i}),0\neq a\in \mathbb{C}, i=1,\ldots, m-1$$
 $$Q_i^m=(x^{m-i+1},y^i),i=1,\ldots,m$$
(ii)The punctual Hilbert scheme $Hilb^0_m(R)$, as algebraic set, is a rational chain 
 $$C^m_1\cup_{Q_2^m} C^m_2\cup\ldots\cup_{Q_{m-1}^m}C^m_{m-1} $$
\end{theorem}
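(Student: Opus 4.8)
The plan is to prove (i) by a direct classification of the colength-$m$ ideals and then read off (ii) by computing the closures of the one-parameter families $I_i^m(a)$. First note that $R$ is local and complete, so every ideal of finite colength is primary to the maximal ideal $(x,y)$ and $Hilb_m(R)=Hilb^0_m(R)$. Because $xy=0$, the monomials $\{1,x,x^2,\dots\}\cup\{y,y^2,\dots\}$ form a $\mathbb{C}$-basis of $R$, and $x^kR=x^k\mathbb{C}[[x]]$, $y^kR=y^k\mathbb{C}[[y]]$ for $k\ge1$; I will use repeatedly the \emph{unit lemma}: an element of $R$ with nonzero constant term is a unit, while $x$ annihilates any power series in $y$ with zero constant term. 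Given $I\subset R$ of colength $m\ge1$, set $\alpha=\min\{k:x^k\in I\}$ and $\beta=\min\{k:y^k\in I\}$ (these exist since $R/I$ is Artinian, and $\alpha,\beta\ge1$ because $1\notin I$). A nontrivial $\mathbb{C}$-dependence among the classes of $1,x,\dots,x^{\alpha-1}$ in $R/I$ would, on factoring out the smallest power of $x$, give $x^k\in I$ with $k<\alpha$; so these classes are independent, likewise those of $y,\dots,y^{\beta-1}$, and together these $\alpha+\beta-1$ monomials span $R/I$. If they form a basis then $m=\alpha+\beta-1$, and since $I\supseteq(x^{\alpha},y^{\beta})$ and the latter already has colength $\alpha+\beta-1=m$, we get $I=(x^{\alpha},y^{\beta})=Q_{\beta}^{m}$ with $1\le\beta\le m$.

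Otherwise those monomials are $\mathbb{C}$-dependent, so in $R/I$ the span $W$ of the classes of $1,\dots,x^{\alpha-1}$ meets the span $U$ of the classes of $y,\dots,y^{\beta-1}$ in a nonzero subspace. Choosing $0\ne v\in W\cap U$ and lifting its two expressions gives $f=p(x)-r(y)\in I$ with $\deg p\le\alpha-1$, $\deg r\le\beta-1$, $r(0)=0$; then $p(0)=0$ as well, else $f$ would be a unit. Now $xf=xp(x)\in I$ equals $x^{1+\operatorname{ord}p}$ times a unit, forcing $1+\operatorname{ord}p\ge\alpha$; combined with $\deg p\le\alpha-1$ this makes $p$ a single monomial $e\,x^{\alpha-1}$, $e\ne0$, and symmetrically $yf\in I$ makes $r=d\,y^{\beta-1}$, $d\ne0$. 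Also $\alpha,\beta\ge2$ here, since $\alpha=1$ would force $x\in I$, hence $I=(x,y^m)=Q_m^m$, which is the previous case. Thus $I$ contains $g:=x^{\alpha-1}-c\,y^{\beta-1}$ with $c\ne0$, whose colength is $\alpha+\beta-2$, so $\alpha+\beta\ge m+2$. Equality holds: a second vector in $W\cap U$ independent of $v$ would produce $g'=x^{\alpha-1}-c'y^{\beta-1}\in I$, whence $g-g'=(c'-c)y^{\beta-1}\in I$ with $c'\ne c$ would give $y^{\beta-1}\in I$, contradicting minimality of $\beta$. Hence $\dim(W\cap U)=1$, $g$ has colength $m$, and $I=(g)$. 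Writing $g$, up to the unit $-c^{-1}$, as $y^{i}+a\,x^{m-i}$ with $i=\beta-1$ and $m-i=\alpha-1$ (legitimate since $\alpha+\beta=m+2$), $a\ne0$, we obtain $I=I_i^{m}(a)$ with $1\le i\le m-1$. Finally, the explicit basis of $I_i^m(a)$ displayed below shows $y^i\notin I_i^m(a)$ but $y^{i+1}\in I_i^m(a)$, so $i$, and then $a$, are determined by $I$; together with the fact that each $Q_j^m$ needs two generators while each $I_i^m(a)$ is principal, this makes the list in (i) complete and irredundant.

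For (ii), fix $i\in\{1,\dots,m-1\}$ and put $C_i^m=\{I_i^m(a):a\in\mathbb{C}^{*}\}\subset Hilb_m(R)$. From the computation above, $I_i^m(a)$ has $\mathbb{C}$-basis $\{\,y^i+a\,x^{m-i},\ y^{i+1},y^{i+2},\dots,\ x^{m-i+1},x^{m-i+2},\dots\,\}$; realizing colength-$m$ ideals as points of a Grassmannian of $R/(x,y)^N$ for $N\gg0$, the flat limit of $I_i^m(a)$ as $a\to0$ is $(y^i,x^{m-i+1})=Q_i^m$, and (rewriting the generator as $a^{-1}y^i+x^{m-i}$) its flat limit as $a\to\infty$ is $(x^{m-i},y^{i+1})=Q_{i+1}^m$. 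Hence $a\mapsto I_i^m(a)$ extends to an injective morphism $\mathbb{P}^1\to Hilb_m(R)$ sending $0$ to $Q_i^m$ and $\infty$ to $Q_{i+1}^m$, with image $\overline{C_i^m}=C_i^m\cup\{Q_i^m,Q_{i+1}^m\}$ a rational curve (indeed a $\mathbb{P}^1$). By (i) the $\overline{C_i^m}$ cover $Hilb^0_m(R)$, and the only incidences among them are $\overline{C_i^m}\cap\overline{C_{i+1}^m}=\{Q_{i+1}^m\}$; therefore, as an algebraic set, $Hilb^0_m(R)=C^m_1\cup_{Q_2^m}C^m_2\cup\dots\cup_{Q_{m-1}^m}C^m_{m-1}$, a chain of $m-1$ lines.

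The step I expect to be the main obstacle is the second case of (i): exhibiting the binomial generator and, above all, proving $\alpha+\beta=m+2$ so that the principal ideal $(x^{\alpha-1}-c\,y^{\beta-1})$ already exhausts $I$ — equivalently, ruling out extra elements of $I$ via the minimality of $\alpha$ and $\beta$. Once that is done, the flat-limit computations in (ii) are routine bookkeeping.
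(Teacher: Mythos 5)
Your argument is correct and essentially complete. Note, however, that the paper itself offers no proof of this statement: Theorem 2 is quoted from Ran \cite{R1} and used as input for the deformation-theoretic computations that follow (the paper's first actual proof is of Theorem 3, the scheme structure of $Hilb_m(R)$ at the points $Q_i^m$). So the comparison is between your self-contained classification and a citation. What you supply is an elementary and transparent route: the invariants $\alpha=\min\{k:x^k\in I\}$, $\beta=\min\{k:y^k\in I\}$ together with the observation that $1,x,\dots,x^{\alpha-1},y,\dots,y^{\beta-1}$ span $R/I$, the dichotomy according to whether these classes are independent, and the key dimension count $\dim(W\cap U)=1$ (forced because every nonzero element of $W\cap U$ must be a scalar multiple of $\overline{x^{\alpha-1}}$, which is the clean way to see the step you flag as the main obstacle) pinning down $\alpha+\beta=m+2$ and hence $I=(x^{\alpha-1}-cy^{\beta-1})$. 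Your explicit flat limits $I_i^m(a)\to Q_i^m$ as $a\to 0$ and $I_i^m(a)\to Q_{i+1}^m$ as $a\to\infty$ are exactly the incidences the paper later relies on (e.g.\ in Remark 1 and in the description of how $D_i^m$ meets its neighbors), so your proof also serves as a consistency check on the rest of Section 2. The only points you lean on without proof are standard: that a finite-colength ideal of the complete local ring $R$ is $(x,y)$-primary (so $Hilb_m(R)=Hilb^0_m(R)$), and that the flat limit of a one-parameter family of constant-colength ideals is the limit in the Grassmannian of $R/(x,y)^N$; both are legitimate to invoke.
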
 
\begin{theorem}(Ran,\cite{R1})
The Hilbert scheme $Hilb_m(R)$
$$D_0^m\cup D_1^m\ldots\cup D_{m-1}^m\cup D_m^m$$
where each $D_i^m$ is a smooth and m-dimensional germ supported on
$C_i^m$ for $i=1,\ldots, m-1$ or $Q_i^m$ for $i=0,m$; for $i=1,\ldots, m-1, D_i^m$ meets its neighbors $D_{i\pm 1} ^m$ transversely in dimension $m-1$ and meets no other $D_i^m$. The generic point of $D_i^m$ corresponds to subscheme of $Spec(R)$ comprised of $m-i$ points on the x-axis and i points on the y-axis.
\end{theorem}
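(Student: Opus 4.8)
The plan is to exhibit the $m+1$ irreducible components of $Hilb_m(R)$ by hand, as closures of explicit smooth families, and then to read off dimension, smoothness, the incidence pattern, and transversality from the Zariski tangent spaces $T_I=\operatorname{Hom}_R(I,R/I)$. It is most transparent to work first over the global ring $R'=\mathbb{C}[x,y]/(xy)$, where finite subschemes genuinely move on the two axes, and then to pass to the formal germ along the node-supported locus; by the deformation-theoretic remarks of the introduction this costs nothing. The inputs from the preceding theorem of Ran are the complete list of colength-$m$ (punctual) ideals $I_i^m(a)$, $Q_j^m$, together with the fact that the punctual locus is the chain $C_1^m\cup_{Q_2^m}\cdots\cup_{Q_{m-1}^m}C_{m-1}^m$, each $C_i^m\cong\mathbb{P}^1$ swept out by $a\in\mathbb{C}$ with $Q_i^m$ at $a=0$ and $Q_{i+1}^m$ at $a=\infty$.

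First, for $1\le i\le m-1$, I would introduce the family over the open subset $U_i\subset\mathbb{A}^{m-i}\times\mathbb{A}^i$ of pairs $(f,g)$ with $f$ monic of degree $m-i$, $g$ monic of degree $i$, $f(0)g(0)\neq 0$, given by the ideal
\[
I_{f,g}=\bigl(\,g(0)f(x)+f(0)g(y)-f(0)g(0)\,\bigr)\subset R'.
\]
This single generator is a nonzerodivisor, restricts to $f(x)$ on the $x$-branch and to $g(y)$ on the $y$-branch up to units, and hence cuts out a flat colength-$m$ family whose generic member is $m-i$ distinct points on the $x$-axis and $i$ on the $y$-axis. Letting $f\to x^{m-i}$ and $g\to y^i$ with $g(0)/f(0)\to a$, the flat limit is $I_i^m(a)=(y^i+ax^{m-i})$ — which automatically also contains $x^{m-i+1}$ and $y^{i+1}$ — and then $a\to 0$, $a\to\infty$ recover $Q_i^m$, $Q_{i+1}^m$; so the closure $D_i^m$ of the image of $U_i$ in $Hilb_m$ contains all of $C_i^m$. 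For $i=0$ (resp. $i=m$) I would instead use the family $(f(x),y)$ over monic $f$ of degree $m$ (resp. $(x,g(y))$), which gives $D_0^m\cong\mathbb{A}^m$ with distinguished member the monomial ideal $(x^m,y)$ (resp. $(x,y^m)$); these are the ideals denoted $Q_i^m$, $i=0,m$, in the statement. By the classification every colength-$m$ ideal lies on one of these $m+1$ germs, so $Hilb_m(R)=D_0^m\cup\cdots\cup D_m^m$ set-theoretically, with the stated generic points.

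Next I would compute $T_I=\operatorname{Hom}_R(I,R/I)$ along the list. For every member of some $U_i$ and for $I=I_i^m(a)$ with $a\neq 0$ the ideal is principal with zero annihilator, so $T_I\cong R/I$ is $m$-dimensional; hence $Hilb_m(R)$ is smooth of dimension $m$ at all these points, which forces each $D_i^m$ to be reduced, $m$-dimensional, and smooth away from the joins, and identifies the $D_i^m$ as precisely the irreducible components. At every monomial ideal $Q_j^m$ a minimal presentation whose syzygy module is $(x)\oplus(y)$ gives $\dim T_{Q_j^m}=m+1$. For the incidences: $D_i^m$ and $D_{i+1}^m$ both contain the locus $E_i$ of subschemes consisting of $m-i-1$ free points on the $x$-axis, $i$ free points on the $y$-axis, and one reduced point at the node (reached from either side by sliding that node point onto one branch); $E_i$ is smooth of dimension $m-1$, and near a general $p\in E_i$ the Hilbert scheme is visibly the product of a smooth $(m-1)$-fold with $Hilb_1(R)=\operatorname{Spec}R$, i.e. with the node $\{z_1z_2=0\}$, so $D_i^m$ and $D_{i+1}^m$ are its two sheets meeting transversely along $E_i$. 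That $D_i^m\cap D_j^m=\varnothing$ for $|i-j|\ge 2$ I would get by noting that on $D_i^m$ the lengths $\dim_{\mathbb{C}}R/(I+(y))$ and $\dim_{\mathbb{C}}R/(I+(x))$ of the $x$-part and $y$-part of the subscheme stay within $1$ of their generic values $m-i$ and $i$, which is incompatible with membership in $D_j^m$.

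The genuinely hard step is pinning down the local structure of $Hilb_m(R)$ at the interior joins $Q_j^m$, $2\le j\le m-1$: there $Q_j^m$ is a fat punctual scheme with no visible product decomposition, and the computation $\dim T_{Q_j^m}=m+1$ only shows that two transverse smooth $m$-folds would fit. To finish, I would pass to the miniversal deformation of $Q_j^m$, carry out the obstruction (second-order) analysis, and present its local ring as $\mathbb{C}[[z_1,\dots,z_{m+1}]]$ modulo a single relation of the form $z_1z_2+(\text{higher order})$, then absorb the higher-order terms by a coordinate change to obtain exactly $z_1z_2$. This simultaneously yields transversality at $Q_j^m$, smoothness of each $D_i^m$ there, reducedness of $Hilb_m(R)$ along the whole chain, and the locally complete intersection property — these are precisely the local computations of \cite{R1}, and they serve as the template for the full-flag statements of section 3.
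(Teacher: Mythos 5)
Your global strategy (exhibiting each $D_i^m$ as the closure of an explicit smooth family of subschemes moving on the two branches, computing $Hom_R(I,R/I)$ along the list, and using semicontinuity of the branchwise lengths to rule out incidences $|i-j|\geq 2$) is a legitimate and genuinely different route from the paper, which works purely locally at the points of $Hilb_m^0(R)$. But you have correctly identified, and then not closed, the gap on which the whole theorem rests: the local structure of $Hilb_m(R)$ at the interior monomial ideals $Q_j^m$. Knowing $\dim T_{Q_j^m}=m+1$ only tells you the germ is a hypersurface in $\mathbb{A}^{m+1}$; and a hypersurface singularity whose equation has the form $z_1z_2+(\text{higher order})$ is \emph{not} in general equivalent to $z_1z_2$ (e.g.\ $z_1z_2+z_3^3$ is irreducible), so your final step ``absorb the higher-order terms by a coordinate change to obtain exactly $z_1z_2$'' is precisely the assertion to be proved, not a formality. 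Deferring it to ``the local computations of \cite{R1}'' means the proposal does not contain a proof of the statement.

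The paper closes this gap by direct computation rather than obstruction calculus: for an Artin local $S$, any flat $S$-deformation of $Q_i^m=(x^{m+1-i},y^i)$ is written as $I_s=(f,g)$ with $f=x^{m+1-i}+\sum a_jx^j+\sum b_jy^j$ and $g=y^i+\sum c_jx^j+\sum d_jy^j$; the identities $yf-b_{i-1}g=0=xg-c_{m-i}f$ in $R_s$ (forced by expanding these elements in the $S$-basis $1,x,\dots,x^{m-i},y,\dots,y^{i-1}$ of $R_s/I_s$) yield a list of coefficient relations shown to be both necessary and sufficient for flatness, and after eliminating the dependent coefficients the deformation space is exactly $\{b_{i-1}c_{m-i}=0\}\subset\mathbb{A}^{m+1}$ on the nose --- there are no higher-order corrections to absorb. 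Everything you want (smoothness and $m$-dimensionality of the two sheets, transversality along an $(m-1)$-dimensional locus, and the identification of the generic points via $b_{i-1}=0$ versus $c_{m-i}=0$, i.e.\ the ideal being generated by $g$ versus $f$) is then read off from that single equation. If you wish to keep your global framing, you must still carry out this explicit flatness computation (or an equivalent one) at each $Q_j^m$; without it the theorem is not proved.
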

\begin{proof}
Clearly $\h(R)$ is a germ supported on $\h^0(R)$, so this is a matter of determining the scheme structure of $\h(R)$ at each point of $\h^0(R)$, which may be done formally by testing on Artin local algebras.
Given S artinian local $\mathbb{C}$- algebra, a flat S-deformation of $I=Q_i^m=(x^{m+1-i},y^i)$ is given by an ideal
$$I_s=(f,g),$$
\item $$f=x^{m+1-i}+f_1(x)+f_2(y)$$
$$g=y^i+g_1(x)+g_2(y),$$
where $f_i, g_i$ have coefficients in $m_s$ and $R_s/I_s$ is S-free of rank m.  By Nakayama's Lemma and a definition of a flat S-deformation
$$1,x,\ldots,x^{m-i},y,..y^{i-1}$$
is S-free basis for $R_s/I_s$ \\
We may assume that deg $f_1,g_1\leqslant m-i$ and $f_2,g_2< i$.
\item$$f_1(x)=\sum^{m-i}_{j=0}a_jx^j, f_2(y)=\sum ^{i-1}_{j=1}b_jy^j$$
$$g_1(x)=\sum_{j=0}^{m-i}c_jx^j, g_2(y)=\sum_{j=1}^{i-1}d_jy^j.$$
Now obviously
$$yf-b_{i-1}g\equiv0 \equiv xg-c_{m-i}f  \mod I_s$$
Since $yf-b_{i-1}g$ and $xg-c$ have terms only $1,x,\ldots, x^{m-i},y,\ldots,y^{i-1}$.
All coefficients of those are zeros, thus $yf-b_{i-1}g=0=xg-c$ in $R_s$.
Thus we get 
$$ b_j=b_{i-1}d_{j+1},j=1,\ldots,i-2,$$
$$b_{i-1}d_1=a_0,$$
\begin{align}\label{eq1}b_{i-1}c_j=0, j=0\ldots, m-i,\end{align}
$$c_j=c_{m-i}a_{j+1}, j=0,\ldots, m-i-1,$$
$$c_{m-i}a_0=0,$$
$$c_{m-i}b_j=0, j=1,\ldots,i-1.  $$

Conversely, suppose the relations  \eqref{eq1}  are satisfied, or equivalently 
$$yf-b_{i-1}g=0=xg-c_{m-i}f.$$
By Nakayama's Lemma, $1,x,\ldots, x^{m-i},y,\ldots,y^{i-1}$ generate $R_s/I_s$, hence to show $I_s$ defines a flat family it suffices to show these elements admit no nontrivial S-relations mod $I_s$. Suppose
  $$u_{m-i}(x)+v_{i-1}(y)=A(x,y)f+B(x,y)g\equiv 0$$
  where u,v, A, B are all polynimials with coefficients in S.
  Using S is artinian (D.C.C condition) and the relations  \eqref{eq1} , we get
 $$ A'=B'=u_{m-i}=v_{i-1}=0.$$  
Hence there are no nontrivial S-relations, as claimed.\\
Thus the Hilbert scheme is embedded in the space of the variables
 $$a_1,..a_{m-i},d_1,\ldots, d_{i-1},b_{i-1}c_{m-i}, $$
 i.e. $\mathbb{A}^{m+1}_{\mathbb{C}}$, and defined by the relation
 $$b_{i-1}c_{m-i}=0.$$
Thus it is a union of 2 smooth m-dimensional components meeting transversely in a smooth (m-1)-dimensional subvariety.
The generic point on the component where $b_{i-1}=0(resp. c_{m-i}=0)$ is clearly an ideal generated by g(resp. f), which has the properties as claimed.\\
\end{proof}
\begin{remark}
$a_1=\ldots a_{m-i}=d_1=\ldots d_{i-1}=0$ for $Hilb^0_m(R)\subseteq Hilb_m(R)$.
\end{remark}
\noindent
Next we consider the relative local situation, i.e. that of a germ of a family of curves with smooth total space specializing to a node. Thus set
  $$\tilde{R}=\mathbb{C}[x,y]_{(x,y)}, B=\mathbb{C}[t]_{(t)}$$
  and view $\tilde{R}$ as a B-module via $xy=t$. This is the versal deformation of the node singularity $xy=0$, so any family of nodal curves is locally a pullback of this.
By universal property of Hilbert scheme, for any
C-algebra S, we have a bijection between diagrams
\begin{displaymath}
 \left.\begin{array}{ccc}Spec S & \longrightarrow & Hilb_m(\tilde{R}/B) \\ &  & \biggl{\downarrow} \\ &  & Spec B\end{array}\right.
\end{displaymath}
and
\begin{displaymath}
\left.\begin{array}{ccc}I_s<\tilde{R}\otimes_B S & \longleftarrow& \tilde{R} \\\biggl{\uparrow} &  & \biggl{\uparrow} \\ S & \longleftarrow & B\end{array}\right.\end{displaymath}
 with $B\rightarrow S$ is a local homomorphism defined by $t\rightarrow s\in m_s$ for some s.
\begin{theorem}[Ran, \cite{R1}]
The relative Hilbert scheme $Hilb_m(\tilde{R}/B)$ is formally smooth, formally (m+1)-dimensional over $\mathbb{C}.$
\end{theorem}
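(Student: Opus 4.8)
The plan is to verify formal smoothness pointwise: at each closed point of $Hilb_m(\tilde R/B)$ I would compute the complete local ring and exhibit it as a power series ring in $m+1$ variables over $\mathbb{C}$. The first observation is that $B=\mathbb{C}[t]_{(t)}$ has a single closed point $(t)$, so every closed point of $Hilb_m(\tilde R/B)$ lies over $t=0$; such a point is a colength-$m$ ideal of $\tilde R\otimes_B\mathbb{C}=\tilde R/(xy)\cong R$, hence by Ran's classification (the first theorem of this subsection) it is one of the ideals $Q_i^m$ or $C_i^m(a)$, $a\neq 0$. It therefore suffices to produce an explicit smooth $(m+1)$-dimensional chart around each of these.

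At a monomial point $I=Q_i^m=(x^{m+1-i},y^i)$ I would repeat the deformation computation from the proof of the preceding theorem, but now inside $\tilde R\otimes_B S$, where $B\to S$ sends $t$ to some $s\in\mathfrak m_S$, so that $xy=s$ holds. Writing a flat $S$-deformation as $I_s=(f,g)$ with $f=x^{m+1-i}+f_1(x)+f_2(y)$, $g=y^i+g_1(x)+g_2(y)$ and $1,x,\dots,x^{m-i},y,\dots,y^{i-1}$ an $S$-basis of the quotient, the syzygies $yf-b_{i-1}g\equiv 0\equiv xg-c_{m-i}f \pmod{I_s}$ now pick up the extra terms coming from $y\cdot x^{j}=s\,x^{j-1}$ and $x\cdot y^{j}=s\,y^{j-1}$. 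Comparing the coefficient of $x^{m-i}$ (resp.\ of $y^{i-1}$), the single relation $b_{i-1}c_{m-i}=0$ of the absolute case is replaced by $b_{i-1}c_{m-i}=s$, while the remaining relations again express $a_0,b_1,\dots,b_{i-2},c_0,\dots,c_{m-i-1}$ through the free parameters and reduce to consequences of this one; the flatness check (no nontrivial $S$-relations among the basis elements) carries over verbatim using that $S$ is Artinian. Hence, in the affine chart with coordinates $a_1,\dots,a_{m-i},d_1,\dots,d_{i-1},b_{i-1},c_{m-i}$ together with $s$, the relative Hilbert scheme is cut out by the single equation $s=b_{i-1}c_{m-i}$; this is the graph of a function, so it is smooth of dimension $m+1$, the structure morphism to $B$ being $t=b_{i-1}c_{m-i}$ — precisely the $s$ that smooths the node $\{b_{i-1}c_{m-i}=0\}$ of the absolute Hilbert scheme.

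At a point $I=C_i^m(a)=(y^i+ax^{m-i})$, $a\neq 0$, I would use that $\tilde R$ is $B$-flat ($\tilde R$ is a domain and $t=xy$ is a nonzerodivisor), so that for $h=y^i+ax^{m-i}+\sum_{j=0}^{m-i-1}\alpha_j x^{j}+\sum_{j=1}^{i-1}\beta_j y^{j}$ the principal ideal $(h)\subset\tilde R\otimes_B S$ already has $S$-free quotient of rank $m$ (since $h$ reduces to a nonzerodivisor of colength $m$, $\tilde R\otimes_B S/(h)$ is $S$-flat of the same rank). This gives a chart $\cong\mathbb{A}^{m+1}$ with coordinates the $m$ lower-order coefficients of $h$ (including $a$) together with $s$, smooth of dimension $m+1$, on which the map to $B$ is the smooth map $t=s$. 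Alternatively one may note that $Hilb_m(\tilde R/B)\to B$ is flat with special fibre $Hilb_m(R)$, that $C_i^m(a)$ lies in the smooth locus of $Hilb_m(R)$, and conclude regularity of the total space by the usual fact that a flat family over a discrete valuation ring with regular special fibre is regular, of one more dimension, at such a point. These charts cover all closed points and glue along the chains $C_i^m$ as in the absolute case, so $Hilb_m(\tilde R/B)$ is everywhere regular of dimension $m+1$ over $\mathbb{C}$, i.e.\ formally smooth and formally $(m+1)$-dimensional over $\mathbb{C}$.

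The step I expect to be the real work is the $Q_i^m$ computation: one must carry the $s$-contributions through the syzygy bookkeeping carefully enough to see that they collapse to the single clean relation $b_{i-1}c_{m-i}=s$ and introduce no obstruction, and that the remaining relations remain purely ``triangular'' in the deformation parameters; everything else is either formal (the reduction to the two types of points, via representability and testing on Artin algebras) or a routine flatness/freeness verification.
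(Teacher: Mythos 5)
Your proposal is correct and follows essentially the same route as the paper: the heart of the argument is the deformation computation at the monomial ideals $Q_i^m$, where the absolute relation $b_{i-1}c_{m-i}=0$ is replaced by $b_{i-1}c_{m-i}=s$ and the remaining syzygy relations become consequences of it, exhibiting $Hilb_m(\tilde R/B)$ locally as the smooth hypersurface $b_{i-1}c_{m-i}=t$ in an affine space of dimension $m+2$. Your additional explicit treatment of the curvilinear points $C_i^m(a)$ is a harmless supplement the paper leaves implicit (those are smooth points of the components already handled by the absolute computation).
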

\begin{proof}
The relative Hilbert scheme parametrizes length-m schemes contained in fibres of $Spec\tilde{R}\rightarrow Spec B$. This means ideals $I_s<\tilde{R}_s$ if colength m containing $xy-s$ for some $s\in m_s$, such that $\tilde{R}_s/I_s$ is S-free. The analysis of these is virtually identical to that contained in the proof of Theorem 2, except that the relation $b_{i-1}c_{m-i}=0$ gets replaced by
$$b_{i-1}c_{m-i}=s$$
and lines 3,5,6 of display \eqref{eq1} are replaced, respectively, by
\begin{align}\label{eq2}b_{i-1}c_j=sa_{j+1},j=0,\ldots,m-i-1\end{align}
$$c_{m-i}a_0=sd_1$$
$$c_{m-i}b_j=sd_{j+1},1\leqslant j\leqslant i-2$$
relations which already follow from the other relations (in lines 1,2,4, of display (7)) combined with the relation $b_{i-1}c_{m-i}=s$. Thus, the relative Hilbert scheme is the subscheme of the affine space of the variables $a_1,\ldots, a_{m-i},d_1,\ldots, d_{i-1},b_{i-1}c_{m-i},t$ defined by the relation
$$b_{i-1}c_{m-i}=t$$
hence is smooth as claimed.
\end{proof}
\noindent
Now we are discussing about the flag Hilbert schemes.
For any decreasing sequences of positive integers
$$m_{\bullet}=(m_1 \ldots m_k)$$
the flag Hilbert scheme $Hilb_{m_{\bullet}}(R)$ parametrizes nested chains of ideals
$$I_s^1\subset \ldots\subset I_s^k \subset R_s$$
such that $R_s/I_s^j$ is S-free of rank $m_j,j=1,\ldots, k$.
(For $Hilb_{m_{\bullet}}(\tilde{R}/B)$, the chain of ideals containing $xy-s$ for some $s\in m_s$ with $R_s/m_s$ is S-free of rank m.
\begin{theorem}(Ran,\cite{R1})
(i)The punctual flag Hilbert scheme $Hilb^0_{m,m-1}(R)$, as algebraic set, is a chain of nonsingular rational curves of the form
$$C^m_1\cup_{(Q_2^m,Q_1^m)}C^{m-1}_1\cup_{(Q_2^m,Q_2^{m-1})}C_2^m\cup\ldots\cup_{(Q^m_{m-1},Q^{m-1}_{m-1})}C^m_{m-1} $$
it has ordinary nodes at $Q^m_2,\ldots, Q^m_{m-1}$ and is smooth elsewhere. Each component $C^m_i$ projects isomorphically to its image in $Hilb^0_m(R)$ and to a point $Q^m_i$ in $Hilb_{m-1}(R)$, and vice-versa for $C^{m-1}_i.$\\
(ii)The flag Hilbert scheme $Hilb_{m,m-1}(R)$, as formal scheme along $Hilb_{m,m-1}$, has normal crossing singularities and at most triple points. Each of its components is formally smooth, m-dimensional and of the form
$$D^{m,m-1}_{i,i'},i=0,\ldots,m, i-1\leqslant i'\leqslant i$$
(iii) The relative flag Hilbert scheme $Hilb_{m,m-1}(\tilde{R}/B)$, as formal scheme along $Hilb^0_{m,m-1}(R)$, is formally smooth and (m+1)-dimensional over $\mathbb{C}$. The natural map $$Hilb_{m,m-1}(\tilde{R}/B)\rightarrow Hilb_{m-1}(\tilde{R}/B)  $$
is a flat, locally complete intersection morphism of relative dimension 1.
\end{theorem}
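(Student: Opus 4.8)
The plan is to imitate the strategy of the proofs of Theorems~3 and~4: the flag Hilbert scheme is a germ along the punctual flag locus $Hilb^0_{m,m-1}(R)$, so its formal structure may be read off by testing flat deformations on Artin local $\mathbb{C}$-algebras $S$ with residue field $\mathbb{C}$. A point of $Hilb^0_{m,m-1}(R)$ is a nested pair $I^1\subset I^2\subset R$ of punctual ideals of colengths $m$ and $m-1$. By the classification (Theorem~2) each of $I^1,I^2$ is of type $(c^\bullet)$ or $(q^\bullet)$; comparing leading monomials shows that $I^1\subset I^2$ can occur only when the type indices satisfy $i-1\le i'\le i$, and that $I^2$ is forced to be of monomial ($Q$-) type at each corner $Q_2^m,\dots,Q_{m-1}^m$ and of generic $C$-type on each open stratum. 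Listing the pairs that occur and the way consecutive ones are glued yields the chain description in (i), the at-most-triple-point normal crossing picture in (ii), and the list of components $D^{m,m-1}_{i,i'}$, $i-1\le i'\le i$; the projections to $Hilb^0_m(R)$ and to $Hilb_{m-1}(R)$ are visible from the normal forms below.

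For the local scheme structure fix such a pair, say $I^1=Q_i^m$, and write the universal $S$-deformation of $I^1$ exactly as in the proof of Theorem~3: $I^1_s=(f,g)$ with $f=x^{m+1-i}+f_1(x)+f_2(y)$, $g=y^i+g_1(x)+g_2(y)$, all coefficients in $\mathfrak{m}_S$, and $1,x,\dots,x^{m-i},y,\dots,y^{i-1}$ an $S$-basis of $R_s/I^1_s$ subject to the relations~\eqref{eq1} (respectively~\eqref{eq2}, with $b_{i-1}c_{m-i}=s$, in the relative case). Since $I^2_s/I^1_s$ must be $S$-free of rank $1$, we have $I^2_s=I^1_s+(h)$ with $h$ a deformation, supported on the smaller basis of $R_s/I^2_s$, of the unique monomial lying in $R/I^1$ but not in $R/I^2$ --- namely $x^{m-i}$ when $i'=i$ and $y^{i-1}$ when $i'=i-1$. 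Imposing $R$-closedness of $I^2_s$ (the reductions of $xh$ and $yh$ modulo $I^1_s$ must be $S$-multiples of $h$) and the freeness of $R_s/I^2_s$, and then running the same Nakayama plus descending-chain-condition argument as in Theorem~3, one finds that the new generator $h$ introduces exactly one more free parameter and one more relation, the latter being again a product of two ``turning'' parameters set equal to $0$ in the absolute case and equal to $t$ in the relative case. After eliminating the redundant parameters this exhibits $Hilb_{m,m-1}(\tilde{R}/B)$ locally as a smooth germ of dimension $m+1$ (its specialization at $t=0$ being the normal-crossing configuration of (ii)), proving the first assertion of (iii). The $C$-type points and the transitions between strata are handled by the same computation with $I_i^m(a)$ in place of $Q_i^m$, a coordinate change that must be checked to patch consistently.

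It remains to analyze the forgetful morphism $\pi\colon Hilb_{m,m-1}(\tilde{R}/B)\to Hilb_{m-1}(\tilde{R}/B)$, $(I^1\subset I^2)\mapsto I^2$. Comparing presentations: on the target, Theorem~4 applied to colength $m-1$ gives smooth coordinates; on the source, by the previous paragraph, a local chart is obtained from one on the target by adjoining the single coordinate carried by $h$ together with the single relation it imposes. Hence $\pi$ is, locally, the composite of a smooth morphism with the inclusion of the hypersurface cut out by that one relation; in particular it is a relative complete intersection of relative dimension $1$, the fibre over a punctual point being a smooth rational curve (locally an $\mathbb{A}^1$, globally the $\mathbb{P}^1$ of ways to enlarge the length-$(m-1)$ subscheme by one point at the node). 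Flatness then follows from miracle flatness: the source is a complete intersection, hence Cohen--Macaulay, the target is regular, and the fibres are equidimensional of dimension $1$.

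The main obstacle is the bookkeeping in the middle paragraph: one must check that the chosen normal form for the pair $(I^1_s,I^2_s)$ is genuinely versal --- that the affine chart covers a full formal neighborhood of the given point --- and, more delicately, track how the new syzygies coming from $xh$ and $yh$ interact with the old relations~\eqref{eq1}/\eqref{eq2}, so as to be certain that precisely one new parameter survives and precisely one new equation is produced, with no hidden extra components or non-reduced structure. Everything else is then a matter of assembling these local pictures along the chain $Hilb^0_{m,m-1}(R)$.
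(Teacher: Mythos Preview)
Your overall plan is sound, but the execution diverges from the paper in a way that creates a real gap in part~(iii).

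\textbf{The different parametrization.} In your middle paragraph you deform $I^1=Q_i^m$ first and then set $I^2_s=I^1_s+(h)$ with $h$ a perturbation of the ``missing'' monomial. The paper instead deforms \emph{both} ideals in their canonical two-generator form: $I_s=(f,g)$ deforming $Q_i^m$ and $I'_s=(f',g')$ deforming $Q_i^{m-1}$, and then expresses the inclusion $I_s\subset I'_s$ by expanding $f,g$ in the $S$-basis $1,x,\dots,x^{m-i-1},y,\dots,y^{i-1}$ of $R_s/I'_s$. The resulting coefficient relations collapse to the elegant identities
\[
f=(x+a_{m-i}-a'_{m-i-1})\,f',\qquad g=g'+c_{m-i}\,f',
\]
which immediately give $c'_{m-i-1}=c_{m-i}(a_{m-i}-a'_{m-i-1})$ and hence the single equation $(a_{m-i}-a'_{m-i-1})\,b'_{i-1}\,c_{m-i}=t$. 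So the flag scheme has the $m+1$ regular parameters $a'_1,\dots,a'_{m-i-1},a_{m-i},d'_1,\dots,d'_{i-1},b'_{i-1},c_{m-i}$, and the fibre $t=0$ is visibly a normal-crossing triple point. Note that the equation has \emph{three} factors, not a product of two ``turning'' parameters as you wrote.

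\textbf{The gap.} Your chart on the flag is built from coordinates on $Hilb_m(\tilde R/B)$ (the $I^1$ side) together with $h$. But in the last paragraph you assert that this same chart is obtained from a chart on the \emph{target} $Hilb_{m-1}(\tilde R/B)$ by adjoining $h$'s coordinate and one relation. That does not follow: the parameters of $I^2$ are not transparent in your presentation---they are buried inside the pair $(I^1_s,h)$---so you have not identified the map $\pi$ in coordinates at all. The paper's parametrization is tailored to $\pi$: the primed variables \emph{are} the coordinates on $Hilb_{m-1}$, and one sees directly that $Hilb_{m,m-1}(\tilde R/B)$ sits as the hypersurface $c'_{m-i-1}=c_{m-i}(a_{m-i}-a'_{m-i-1})$ inside an $\mathbb A^2_{a_{m-i},c_{m-i}}$-bundle over $Hilb_{m-1}(\tilde R/B)$, whence $\pi$ is a flat LCI morphism of relative dimension~$1$ with no appeal to miracle flatness. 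If you want to keep your $I^1$-first approach you must separately exhibit the coordinates of $I'_s$ as functions of your chart and check they generate the target's local ring; it is simpler to switch to the paper's symmetric set-up from the start.
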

\begin{proof}
Too see a scheme structure of $Hilb_{m,m-1}(R)$ at  $(Q^m_i,Q^{m-1}_i),1<i<m$.
 $\{(I_s<I_s')|I_s,I_s'<R\otimes_\mathbb{C} S, dim_s(R_s/I_s)=m, dim_s(R_s/I_s)=m-1, I_s, I_s'$ are flat deformation of $I,I' resp.\}$\\ with $I=(x^{m+1-i},y^i), I'=(x^{m-i},y^i).$
To see a scheme structure of $Hilb_{m,m-1}(R/B)$,
 for some $s\in m_s$, $I_s$ and $I_s'$ generated by $xy-s$ and $f, g (resp. f',g')$ with
$$f=x^{m+1-i}+\sum^{m-i}_{j=0}a_jx^j+\sum ^{i-1}_{j=1}b_jy^j,\;g=y^i+\sum_{j=0}^{m-i}c_jx^j+\sum_{j=1}^{i-1}d_jy^j.$$
$$f'=x^{m-i}+\sum^{m-i-1}_{j=0}a_j'x^j+\sum ^{i-1}_{j=1}b_j'y^j,\;g'=y^i+\sum_{j=0}^{m-i-1}c_j'x^j+\sum_{j=1}^{i-1}d_j'y^j.$$\\
As we say above, the relations \eqref{eq2} are necessary and sufficient so that $I_s,\;I_s'$ are S-flat deformations of $I,\;I'$  respectively.
In particular, these include the equations:
$$b_{i-1}c_{m-i}=s=b_{i-1}'c_{m-i-1}'.$$
The other relations remained can be from $I_s<I_s'$. 
To this end it suffices to note that
$$ 1,x,x^2,\ldots, x^{m-i-1},y,\ldots,y^{i-1}$$ form an S-free basis of $R_s/I_s'$, then express $f, g$ in terms of this basis and equate the coefficients to 0. 
Thus
$$a_0=-(a_{m-i}-a_{m-i-1}')a_0'+sb_1'$$
$$a_j=a_{j-1}'-(a_{m-i}-a_{m-i-1}')a_j',j=1,\ldots,m-i-1$$
\begin{align}\label{eq3}b_j=(a_{m-i}-a_{m-i-1}')b_j'+sb_{j+1}',j=1,\ldots,i-2\end{align}
$$\mathbf{b_{i-1}=(a_{m-i}-a_{m-i-1}')b_{i-1}'}$$
$$c_j=c_j'+c_{m-i}a_j', j=0,\ldots,m-i-1$$
$$d_j=d_j'+c_{m-i}b_j', j=1,\ldots,i-1$$\\
These coefficient relations are equivalent to 
$$f=(x+a_{m-i}-a_{m-i-1}')f', g=g'+c_{m-i}f'.$$
By formal manipulations, these relations imply that
$$\mathbf{c_{m-i-1}'=c_{m-i}(a_{m-i}-a_{m-i-1}')}.$$
Thus, We can get
$$(a_{m-i}-a_{m-i-1}')b_{i-1}'c_{m-i}=s.$$
Consequently,
 the relative flag Hilbert scheme is smooth here, with regular parameters
$$a_1',\ldots, a_{m-i-1}',a_{m-i},d_1',\ldots, d_{i-1}',b_{i-1}',c_{m-i}$$
and its fibre, i.e. $Hilb_{m,m-1}(R)$, it the normal crossing  triple point
$$(a_{m-i}-a_{m-i-1}')b_{i-1}'c_{m-i}=0.$$
The 3 components are: $D^{m,m-1}_{i-1,i-1}$ defined by $a_{m-i}-a_{m-i-1}'=0$;
$D^{m_{\bullet}m-1}_{i,i}$, defined by $b_{i-1}'=0$;$D^{m,m-1}_{i,i-1}$, defined by $c_{m-1}=0$. \\
Finally the relation [15] exhibits $Hilb_{m,m-1}(\tilde{R}/B)$ locally as a conic in an $\mathbb{A}^2$ with coordinates $a_{m-i},c_{m-i}$ over $Hilb_{m-1}(\tilde{R}/B)$, and therefore the projection is a flat locally complete intersection morphism.
\end{proof}
\begin{remark}(Ran,\cite{R1})\; Continuing Theorem above, for the further reference we note that in the analogue case of (relative) deformations of $(Q^m_i, Q^{m-1}_{i-1})\in Hilb_{m_{\bullet}}(\tilde{R}/B)$ \end{remark}
\begin{proof}
 A flat S-deformation of an ideal $I'=Q_{i-1}^{m-1}$ is given by a ideal $I_s'=(f',g')$ of $\tilde{R}_s$ relative to B,
\begin{center}
 $f'=x^{m-i+1}+\sum^{m-i}_{j=0}a_j'x^j+\sum^{i-2}_{j=1}b_j'y^j, \;g'=y^{i-1}+\sum^{m-i}_{j=0}c_j'x^j+\sum^{i-2}_{j=1}d_j'y^j.$ \end{center}
As we say above, the relations \eqref{eq2}  are necessary and sufficient so that $I_s,\;I_s'$ are S-flat deformations of $I,\;I'$ respectively.
In particular, these include the equations:
$$b_{i-1}c_{m-i}=s=b_{i-1}'c_{m-i-1}'.$$

The other relations remained can be from $I_s<I_s'$. 
To this end it suffices to note that
$$ 1,x,x^2,\ldots, x^{m-i},y,\ldots, y^{i-2}$$ form an S-free basis of $R_s/I_s'$, then express $f, g$ in terms of this basis and equate the coefficients to 0. 
Thus
$$a_k=a_k'+b_{i-1}c_k'\; for\;k=0,\ldots,m-i$$
$$b_k=b_k'+b_{i-1}d_k'\;\; for\;k=1,\ldots, i-2$$
\begin{align}\label{eq4}c_k=c_{k+1}'s+(d_{i-1}-d_{i-2}')c_k\; for\; k=0,\ldots,m-i-1\end{align}
$$c_{m-i}=(d_{i-1}-d_{i-2}')c_{m-i}'$$
$$d_1=c_0+(d_{i-1}-d_{i-1}')d_1$$
$$d_k=d_{k-1}'+(d_{i-1}-d_{i-2}')d_k'\; for\; k=2,\ldots,i-2$$
 or equivalently 
$$f=f'+b_{i-1}g',\; g=(y+d_{i-1}-d_{i-2}')g'$$
By formal manipulations, these relations imply that
$\mathbf{ b_{i-2}'=(d_{i-1}-d_{i-2}')b_{i-1}}$(using above equation and\; $b_{i-2}= b_{i-1}d_{i-1}$ in  \eqref{eq1} and \;$\mathbf{c_{m-i}=(d_{i-1}-d_{i-2}')c_{m-i}'}.$)
Hence we have the relation
$$(d_{i-1}-d_{i-2}')b_{i-1}c_{m-i}'=s.$$
Concequently, the relative flag Hilbert scheme$Hilb_{m,m-1}(R)$ at this pont has $m+1$ regular parameters
$$a_1',\ldots, a_{m-i}', d_1',\ldots,d_{i-2}',c_{m-i}',d_{i-1}, b_{i-1}$$
and its fibre, i.e. $Hilb_{m,m-1}(R)$, it the normal crossing  triple point
$$(d_{i-1}-d_{i-2}')b_{i-1}c_{m-i}'=0.$$
\end{proof}
\noindent
\begin{lemma} (Ran,\cite{R1})
Set  $m_{\bullet}=(m, m-1, m-2)$. Then (i) As algebraic set, $Hilb_{m_{\bullet}}(R)$ is of the form
 $$C^m_1\cup C^{m-1}_1\cup C^{m,m-2}_{2,1}\cup C^{m-1}_2\cup \ldots\cup C^{m,m-2}_{m-2,m-3}\cup C^{m-1}_{m-2}\cup C^{m}_{m-1}.$$
 Each component $C^{m,m-2}_{i,i-1}$ projects isomorphically to $C^m_i\times C^{m-2}_{i-1}\subset Hilb_{m,m-2}(R)$ and to $\{ Q^{m-1}_{i} \} \subset Hilb_{m-1}(R).$\\
(ii) $Hilb_{m_{\bullet}}(\tilde{R}/B)$ is irreducible and is smooth except at points $(Q_i^m,Q_i^{m-1}, Q^{m-2}_{i-1})$, where is has a rank 4 quadratic hypersurface singularity with local equation
$$(a_{m-i}-a_{m-i-1}')c_{m-i}=(d_{i-1}'-d_{i-2}'')c_{m-i-1}''.$$
\end{lemma}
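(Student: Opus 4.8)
The plan is to follow the method of Theorems 2--5 verbatim: $Hilb_{m_\bullet}(\tilde R/B)$ is a germ along the punctual flag locus described in (i), so every claim is a formal-local computation, tested on Artin local $\mathbb{C}$-algebras $S$. For (i), a point of that locus is a chain $I\subset I'\subset I''$ of punctual ideals of colengths $m,m-1,m-2$, and by Ran's classification (Theorem 2) each of $I,I',I''$ is either a monomial ideal $Q^\bullet_\bullet$ or a \emph{principal} curve-ideal $I^\ell_j(a)=(y^j+ax^{\ell-j})$, $a\neq 0$. Running through the cases and imposing the two inclusions -- using that a principal ideal admits a unique corank-one subideal and is contained in a unique ideal of one smaller colength -- one finds that on any stratum where $I$ or $I''$ is nonmonomial the middle ideal is forced to be the rigid monomial $Q^{m-1}_i$; the data then decouples into $I\subset Q^{m-1}_i\subset I''$, and a direct check gives $\{I:I\subset Q^{m-1}_i\}=C^m_i$ and $\{I'':Q^{m-1}_i\subset I''\}=C^{m-2}_{i-1}$, whence the component $C^{m,m-2}_{i,i-1}\cong C^m_i\times C^{m-2}_{i-1}$, which maps to the point $Q^{m-1}_i$ in $Hilb_{m-1}(R)$. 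The one-dimensional pieces $C^m_i$ and $C^{m-1}_i$ are read off exactly as in Theorem 5(i), and the pieces glue into the asserted chain; in particular the locus is connected.

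For (ii) I first check that $Hilb_{m_\bullet}(\tilde R/B)$ is formally smooth of dimension $m+1$ at every point of the punctual locus except the alternating corners of the surfaces $C^{m,m-2}_{i,i-1}$, namely $(Q^m_i,Q^{m-1}_i,Q^{m-2}_{i-1})$ (where the exponent of $x$, then of $y$, decreases along the chain) and the symmetric $(Q^m_i,Q^{m-1}_{i-1},Q^{m-2}_{i-1})$. Indeed, wherever one of $I,I',I''$ is nonmonomial one of the two nestings is rigid by the remark just made, so the germ reduces to an open piece of a $2$-step relative flag Hilbert scheme $Hilb_{m,m-1}(\tilde R/B)$ or $Hilb_{m-1,m-2}(\tilde R/B)$ (possibly crossed with a smooth curve $C^m_i$ or $C^{m-2}_{i-1}$), which is formally smooth by Theorem 5(iii); and at a non-alternating corner $(Q^m_i,Q^{m-1}_i,Q^{m-2}_i)$ or $(Q^m_{i+1},Q^{m-1}_i,Q^{m-2}_{i-1})$ (only the exponent of $x$, resp. only of $y$, decreasing) the two inclusions are each of the smooth type computed in the proof of Theorem 5, and composing the two local models yields a smooth $(m+1)$-dimensional germ with no surviving relation.

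The content of the lemma is the local computation at the alternating corner $P=(Q^m_i,Q^{m-1}_i,Q^{m-2}_{i-1})$; the symmetric corner reduces to it by the involution $x\leftrightarrow y$ of $R$. I write flat $S$-deformations $I_s=(xy-s,f,g)$, $I'_s=(xy-s,f',g')$, $I''_s=(xy-s,f'',g'')$ of the three monomial ideals with undetermined coefficients. Flatness imposes, for each ideal, the relations of Theorem 2 in relative form, in particular the three discriminant relations
$$b_{i-1}c_{m-i}=s=b'_{i-1}c'_{m-i-1}=b''_{i-2}c''_{m-i-1}.$$
The inclusion $I_s\subset I'_s$ is the $Q^m_i\to Q^{m-1}_i$ case of the proof of Theorem 5: expressing $f,g$ in the $S$-basis of $R_s/I'_s$ forces $f=(x+a_{m-i}-a'_{m-i-1})f'$ and $g=g'+c_{m-i}f'$, and the manipulations there produce $c'_{m-i-1}=c_{m-i}(a_{m-i}-a'_{m-i-1})$. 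The inclusion $I'_s\subset I''_s$ is the $Q^\ell_j\to Q^{\ell-1}_{j-1}$ case of the Remark following Theorem 5, with $\ell=m-1,\ j=i$: it forces $f'=f''+b'_{i-1}g''$ and $g'=(y+d'_{i-1}-d''_{i-2})g''$, and produces $c'_{m-i-1}=(d'_{i-1}-d''_{i-2})c''_{m-i-1}$. Equating the two expressions for $c'_{m-i-1}$ gives the single relation
$$(a_{m-i}-a'_{m-i-1})c_{m-i}=(d'_{i-1}-d''_{i-2})c''_{m-i-1},$$
and from it, together with $s=b'_{i-1}(a_{m-i}-a'_{m-i-1})c_{m-i}$, all the remaining relations of the two inclusions and of the flatness conditions follow -- they either express the lower coefficients of $f,g,f',g'$ in terms of the free parameters or are identities in those parameters, exactly as the relations that ``already follow'' do in Theorems 4 and 5. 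Hence the completed local ring of $Hilb_{m_\bullet}(\tilde R/B)$ at $P$ is a power series ring over $\mathbb{C}$ modulo the single displayed quadric, and since the four linear forms $a_{m-i}-a'_{m-i-1}$, $c_{m-i}$, $d'_{i-1}-d''_{i-2}$, $c''_{m-i-1}$ are linearly independent among the deformation coordinates, that quadric has rank $4$: the germ is $\mathbb{C}[[u_1,\dots,u_N]]/(u_1u_2-u_3u_4)$ with $N=m+2$.

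Finally the conclusions assemble. A rank-$4$ quadric hypersurface is a hypersurface, hence a local complete intersection, and it is normal because its singular locus has codimension $3\geq 2$ (Serre's $R_1+S_2$); with the formal smoothness of the second paragraph this gives normality and the local complete intersection property everywhere along the punctual locus, of pure dimension $m+1$. Being a germ along the connected set $Hilb^0_{m_\bullet}(R)$ the scheme is connected, and each of its local rings there is a domain (regular, or the local ring of a rank-$4$ quadric cone), so it is irreducible. The step I expect to be the main obstacle is the bookkeeping at the corner: one must keep straight the coefficients of the three ideals, verify that after imposing flatness and both inclusions \emph{exactly one} relation survives -- in particular that none of the relations which ``already follow'' secretly forces an extra constraint -- and check that the surviving quadric genuinely has rank $4$ rather than dropping rank, which would change the singularity type and could destroy normality.
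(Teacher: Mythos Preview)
Your approach is correct and is precisely the method the paper sets up: the paper states Lemma 6 as a result from \cite{R1} without reproducing a proof there, but your argument uses exactly the deformation-theoretic computation carried out in the paper's proofs of Theorems 4, 5 and Remark 2, and it is the same computation the paper performs in Lemma 8 for the 4-step flag (where your $c'_{m-i-1}$ identity appears as the displayed relation $c^1_{m-i-1}=(d_{i-1}^1-d_{i-2}^2)c_{m-i-1}^2=(a_{m-i}^0-a_{m-i-1}^1)c_{m-i}^0$). One small bookkeeping point worth making explicit: when you pass to the final parameter set, $a'_{m-i-1}$ is not itself free but equals $a''_{m-i-1}+b'_{i-1}c''_{m-i-1}$ by the Remark's relations \eqref{eq4}; this only adds a cubic correction to the displayed equation, so the tangent cone is still the rank-4 quadric $(a_{m-i}-a''_{m-i-1})c_{m-i}=(d'_{i-1}-d''_{i-2})c''_{m-i-1}$ and the singularity type is as claimed.
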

\noindent
\begin{theorem}(Ran,\cite{R1}) The (full) flag Hilbert scheme $fHilb_m(\tilde{R}/B)$ has locally complete intersection singularties and its natural map to B is local complete intersection morphism. In particular $fHilb_m(\tilde{R}/B)$ is reduced and is flat over B
\end{theorem}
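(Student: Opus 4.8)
I would argue by induction on $m$, reducing to the two-step case already recorded in Theorem~5(iii), and then supplement this with the explicit local computation at the monomial chains, which is needed anyway for the finer statements of Section~3. Throughout one works formally along the punctual full-flag Hilbert scheme $fHilb^0_m(R)$: since $fHilb_m(\tilde R/B)$ is a germ supported there and all the relevant deformation functors are prorepresentable, it is enough to describe, for each chain $I^m\subset I^{m-1}\subset\cdots\subset I^1\subset R_s$ with $R_s/I^j$ being $S$-free of rank $j$ and every $I^j$ containing $xy-s$ for some $s$ in the maximal ideal of $S$, the corresponding functor of flat $S$-deformations over Artin local $\mathbb{C}$-algebras $S$.

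For the induction I would use that a chain $I^m\subset\cdots\subset I^1$ is the same datum as a chain $I^{m-1}\subset\cdots\subset I^1$ together with a two-step flag $I^m\subset I^{m-1}$, the two required to agree on the colength-$(m-1)$ ideal. This gives a natural identification
$$fHilb_m(\tilde R/B)\;\cong\; fHilb_{m-1}(\tilde R/B)\times_{Hilb_{m-1}(\tilde R/B)}Hilb_{m,m-1}(\tilde R/B),$$
the first projection forgetting the colength-$m$ ideal and the second forgetting everything below the colength-$(m-1)$ one. By Theorem~5(iii) the map $Hilb_{m,m-1}(\tilde R/B)\to Hilb_{m-1}(\tilde R/B)$ is flat, a local complete intersection morphism, and of relative dimension one; all three properties are stable under base change, so the forgetful map $p\colon fHilb_m(\tilde R/B)\to fHilb_{m-1}(\tilde R/B)$ is again flat, local complete intersection, of relative dimension one. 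Composing $p$ with $fHilb_{m-1}(\tilde R/B)\to B$, which is flat and a local complete intersection morphism by the inductive hypothesis (the base case $m=1$, where $fHilb_1(\tilde R/B)$ is cut out in $\mathbb{A}^2$ over $B$ by $xy=t$, being immediate), shows that $fHilb_m(\tilde R/B)\to B$ is flat and a local complete intersection morphism; composing $p$ instead with $fHilb_{m-1}(\tilde R/B)\to\operatorname{Spec}\mathbb{C}$ shows that $fHilb_m(\tilde R/B)$ has local complete intersection singularities. The same base-change induction, using that the fibre of the nested Hilbert scheme of a smooth curve over a configuration of distinct points is again that curve, also shows $fHilb_m(\tilde R/B)$ is irreducible, extending Lemma~6.

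To exhibit the singularities explicitly I would repeat, at each monomial chain $I^j=Q^j_{i_j}=(x^{j+1-i_j},y^{i_j})$, the deformation analysis of Theorems~2, 4 and 5: write generators $f^{(j)},g^{(j)}$ at every level with all new coefficients in the maximal ideal of $S$, impose at each level the flatness relations (the analogue of \eqref{eq2}, including $b^{(j)}c^{(j)}=s$), and impose the nesting conditions $I^j_s\subset I^{j-1}_s$, each equivalent to a pair of identities of the shape $f^{(j)}=(\text{linear unit})\,f^{(j-1)}+(\cdots)g^{(j-1)}$, $g^{(j)}=g^{(j-1)}+(\cdots)f^{(j-1)}$ as in \eqref{eq3} and \eqref{eq4} (with $x\leftrightarrow y$ and $f\leftrightarrow g$ according to whether the corresponding lattice-path step is horizontal or vertical). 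These identities solve for the level-$j$ coefficients in terms of the level-$(j-1)$ ones plus one new ``ratio'' variable, so eliminating down the whole chain should leave as free formal parameters only the coefficients along the straight segments of the path, the ratio variables at its corners, and $t$; the only surviving equations should be one per corner, quadrics of the type $(a_{m-i}-a'_{m-i-1})c_{m-i}=(d'_{i-1}-d''_{i-2})c''_{m-i-1}$ (together with one relation $(\cdots)=t$ in the relative direction), obtained by the same formal manipulations that produced the displayed relations in Theorem~5 and Lemma~6. The hard part will be the combinatorial bookkeeping here: one must check that these corner quadrics involve essentially disjoint blocks of the surviving variables, overlapping only at shared corner coordinates, so that they form a regular sequence whose length equals the codimension. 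This is precisely where the structure of the lattice path enters, and for the present theorem it can in any case be bypassed in favour of the induction above, which peels one ideal off the top at a time.

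Finally the three named consequences follow formally. Being locally a complete intersection, $fHilb_m(\tilde R/B)$ is Cohen--Macaulay, hence satisfies $S_1$ and has no embedded components; at the generic point of its (unique) component the chain consists of $m$ distinct points of a smooth fibre $xy=s$ with $s\ne0$, where the scheme is smooth, so it is generically reduced, and $R_0$ together with $S_1$ yields reducedness. Flatness over $B$ is the composition of the flat maps above (concretely, locally $t$ equals one of the nonzero product expressions found in the computation), and the map to $B$ is a local complete intersection morphism by that same composition, as claimed.
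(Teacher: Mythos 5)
Your plan is correct, but it is a genuinely different route from the one this paper takes. The paper's own proof of Theorem 7 is the explicit, point-by-point analysis of Sections 2.2 and 2.3 (Lemmas 8--13 and Theorem 14): each point of the punctual full-flag Hilbert scheme is encoded as a word in the blocks $A$ (constant lower index) and $B$ (lower index dropping by one), and at each such point the nesting relations \eqref{eq3} and \eqref{eq4} are used to eliminate coefficients until exactly $2h-2$ equations remain among $2h-2+m+1$ surviving parameters, $h$ being the number of blocks --- precisely the codimension count that makes the germ a complete intersection. That is exactly the ``combinatorial bookkeeping'' you identify as the hard part and then set aside. Your primary argument --- writing $fHilb_m(\tilde{R}/B)$ as the fibre product of $fHilb_{m-1}(\tilde{R}/B)$ with $Hilb_{m,m-1}(\tilde{R}/B)$ over $Hilb_{m-1}(\tilde{R}/B)$ and invoking stability of flat lci (syntomic) morphisms under base change and composition, with Theorem 5(iii) as the engine --- is sound, is essentially Ran's original argument in \cite{R1}, and proves the theorem as stated with far less work; what it does not produce is the explicit regular parameters and local equations at each singular point, which are the actual content of Sections 2.2--2.3 and the reason the author redoes the proof by hand. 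Two small corrections to your write-up: reducedness should not be made to rest on the parenthetical ``(unique)'' component --- what you need, and have, is that flatness over the discrete valuation ring $B$ forces every component to dominate $B$, and the generic fibre is the full-flag Hilbert scheme of a smooth curve, hence smooth, so $R_0$ holds on every component; and irreducibility does not follow from the base-change induction alone, since the fibres of $Hilb_{m,m-1}(\tilde{R}/B)\rightarrow Hilb_{m-1}(\tilde{R}/B)$ are conics that degenerate to line pairs, so that side claim would need a separate argument in the spirit of Lemma 6 --- though it is not needed for the theorem.
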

\noindent
\begin{Cor}(Ran,\cite{R1})The full flag punctual Hilbert scheme $fHilb_m^0(R)=Hilb_{m,..,1}^0(R)$ is reduced and is the transverse union of components of the form

$$C^{m,m-2,\ldots, m-2j}_{i,i-1,\ldots,i-j}, \forall i, 1\leqslant i\leqslant m-1, j=min(\left[\frac{m}{2}\right],i-1,m-i-1)\geqslant 0,$$ which projects isomorphically to $C^m_i\times\ldots\times C^{m-2j}_{i-j}$, and to a point in the other factors. The punctual Hilbert scheme $Hilb_m^0(R)$, with the scheme structure as above is a reduced nodal curve.
\end{Cor}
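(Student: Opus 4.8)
\noindent\emph{Proof strategy.}\quad The plan is to deduce the Corollary from the preceding Theorem (that $fHilb_m(\tilde{R}/B)$ is l.c.i.\ and flat over $B$) together with the explicit coordinate computations of this subsection (the proof of the theorem on $Hilb_{m,m-1}(R)$, which handles $(Q^m_i,Q^{m-1}_i)$, the Remark after it, which handles $(Q^m_i,Q^{m-1}_{i-1})$, and the Lemma for $m_{\bullet}=(m,m-1,m-2)$). The starting point is that $fHilb^0_m(R)$ is a closed subscheme of $fHilb_m(\tilde{R}/B)$: a flag supported at the node forces the deformation parameter $s$ to vanish (for $s\neq 0$ the element $xy-s$ is a unit modulo any node-supported ideal), and, in the local coordinates attached to a monomial flag, imposing that a member of the flag is supported at the node amounts to the vanishing of its ``moving'' parameters, i.e.\ of the analogues of the $a_j$ and $d_j$ (cf.\ the Remark recording that $Hilb^0_m(R)\subset Hilb_m(R)$ is cut out by $a_1=\dots=a_{m-i}=d_1=\dots=d_{i-1}=0$). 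I would first make this precise slot by slot. Note already here that, because the flag charts are obtained by ``adjusting'' the natural deformation coordinates of one ideal against the next (e.g.\ $c'_{m-i-1}=c_{m-i}(a_{m-i}-a'_{m-i-1})$ in the $Hilb_{m,m-1}$ computation), some of these vanishing conditions are not coordinate hyperplanes but genuine equations such as $d_{i-1}=b'_{i-1}c_{m-i}=0$, and these are exactly what is needed below.

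Next I would pin down the underlying set. If $J<R$ has colength $\ell$ and is supported at the node, the colength-$(\ell-1)$ ideals $J'\supsetneq J$ are the preimages of the lines in $\operatorname{soc}(R/J)$, so they form $\mathbb{P}(\operatorname{soc}(R/J))$. Using the classification of ideals (types $(c^\ell_j),(q^\ell_j)$), one computes that this socle is one-dimensional when $J$ is of type $(c^\ell_j)$ --- forcing $J'=Q^{\ell-1}_j$ --- and is two-dimensional exactly when $J=Q^\ell_j$ with $2\leqslant j\leqslant\ell-1$, in which case $\{J'\}$ is a $\mathbb{P}^1$ equal to the curve $C^{\ell-1}_{j-1}$ (from $Q^{\ell-1}_{j-1}$ to $Q^{\ell-1}_j$). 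Propagating this down a full flag, one sees that along each component a ``moving'' slot (a point of some $C$) is always followed by a forced monomial slot; reading off the indices produces exactly the staircase components, those beginning at colength $m$ being the displayed $C^{m,m-2,\dots,m-2j}_{i,i-1,\dots,i-j}$ with $1\leqslant i\leqslant m-1$ and $j=\min([m/2],i-1,m-i-1)$, together with the analogous staircases beginning at a lower colength and sitting over a fixed monomial in the upper slots. Each such staircase then visibly projects isomorphically onto the product $C^m_i\times\dots\times C^{m-2j}_{i-j}$ and to a point in each of the remaining factors.

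It remains to establish reducedness and the transversality of the crossings; this is the step I expect to be the main obstacle. At the generic point of a staircase all the moving ideals are smooth points of the respective $C$'s, and the $Hilb_{m,m-1}$ and $m_\bullet=(m,m-1,m-2)$ computations localize there to a smooth model of the expected dimension, so $fHilb^0_m(R)$ is generically reduced; since it is cut out locally in the l.c.i.\ scheme $fHilb_m(\tilde{R}/B)$ by a regular sequence of the expected length (this I would check from the local normal forms, using Cohen--Macaulayness of the ambient), it is itself l.c.i.\ and hence reduced. For the crossing structure at a deep point --- a flag through monomials $Q^\ell_j$ with $2\leqslant j\leqslant\ell-1$ in several slots, where several staircases meet --- I would induct on the length of the flag, using the two relative deformation computations for $(Q^m_i,Q^{m-1}_i)$ and for $(Q^m_i,Q^{m-1}_{i-1})$ as the two local building blocks (these are the two ways a staircase turns at a monomial slot): splicing their normal forms and imposing the vanishing conditions of the first paragraph cuts the (possibly rank-$4$ quadric-cone) singularities of $fHilb_m(\tilde{R}/B)$ down to a transverse union of smooth coordinate subspaces, exhibiting $fHilb^0_m(R)$ as a normal-crossings variety. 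The $m_\bullet=(m,m-1,m-2)$ Lemma is the base of the induction; making this bookkeeping uniform in $m$, in particular keeping track of which vanishing conditions have become the genuine equations noted above, is where the work lies.

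Finally, for $Hilb^0_m(R)$ itself: the projection $fHilb^0_m(R)\to Hilb^0_m(R)$ onto the colength-$m$ member restricts on each staircase to the product projection onto its first factor $C^m_i$ (and sends the lower staircases to points), so its image is the chain $C^m_1\cup\dots\cup C^m_{m-1}$, and being the image of a reduced scheme under a proper map it is reduced. By the Remark this scheme is, locally at $Q^m_i$, the conic $b_{i-1}c_{m-i}=0$ in the plane with coordinates $b_{i-1},c_{m-i}$ --- two lines meeting transversally --- and it is smooth elsewhere. Hence $Hilb^0_m(R)$, with this scheme structure, is a reduced nodal curve, completing the proof.
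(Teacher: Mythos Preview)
The paper does not supply its own proof of this Corollary: it is stated as a result of Ran [R1], with no proof environment, immediately after Theorem~7 (also cited from [R1]). The paper's own contribution in \S2.2--2.3 and Theorem~14 is to reprove the l.c.i.\ statement of Theorem~7 via explicit chart-by-chart computations; the Corollary is simply quoted from [R1] as a known consequence. There is therefore no proof in the paper against which to compare your proposal.

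That said, your outline is a reasonable route to the Corollary from the ingredients the paper does assemble (the local charts for $Hilb_{m,m-1}$ in Theorem~5 and Remark~2, the three-step Lemma~6, and the l.c.i.\ Theorem~7), and it is in the spirit of how [R1] proceeds. One point to tighten: in your final paragraph the opening sentence argues that $Hilb^0_m(R)$ is reduced because it is ``the image of a reduced scheme under a proper map'', but $Hilb^0_m(R)$ is not being given the scheme-theoretic image structure here---it already carries its own structure as the Hilb--Chow fiber inside $Hilb_m(R)$, and that is what must be shown reduced. You immediately supply the correct argument anyway (the local equation $b_{i-1}c_{m-i}=0$ in the plane of $b_{i-1},c_{m-i}$, from Remark~1 after Theorem~3), so the image sentence is superfluous and should be dropped. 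The transversality bookkeeping at deep monomial flags that you flag as ``where the work lies'' is precisely the content of the paper's Lemmas~8--13 and Theorem~14, so you could cite those directly rather than redo the induction.
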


We will prove the statement in Theorem 7 above "the full flag Hilbert scheme has locally complete intersection singularties" using a downward induction in the  Lemma 6.(Ran, \cite{R1})

\subsection{The relative flag \text{Hilb}ert scheme of points for  $m_{\bullet}=(m,m-1,m-2,m-3)$}

\begin{lemma}
Set $m_{\bullet}=(m,m-1,m-2,m-3)$. Then\\
(1)$\text{Hilb}_{m_{\bullet}}(\tilde{R}/B)$ is smooth at points $(Q_i^m,Q_i^{m-1},Q_i^{m-2}, Q_{i}^{m-3})$\\
(2) $\text{Hilb}_{m_{\bullet}}(\tilde{R}/B)$ at points $(Q_i^m,Q_i^{m-1},Q_i^{m-2}, Q_{i-1}^{m-3})$ has a hypersurface singularity with local equation  $$c_{m-i}^0(a_{m-i}^0-a_{m-i-1}^1)(a_{m-i-1}^1-\left(a_{m-i-2}^3+b_{i-1}^2c_{m-i-2}^2\right))=(d_{i-1}^2-d_{i-2}^3)c_{m-i-2}^3.$$
\\
(3) $\text{Hilb}_{m_{\bullet}}(\tilde{R}/B)$ at points $(Q_i^m,Q_i^{m-1},Q_{i-1}^{m-2}, Q_{i-1}^{m-3})$ is a local complete intersection with local equations 
\begin{align}
&\left(d_{i-1}^1-\left(d_{i-2}^3+c_{m-i-1}^2b_{i-2}^3\right)\right)b_{i-1}^1=\left(a_{m-i-1}^2-a_{m-i-2}^3\right)b^3_{i-2},\notag\\
c^1_{m-i-1}= 
&(d_{i-1}^1-\left(d_{i-2}^3+c_{m-i-1}^2b_{i-2}^3\right))c_{m-i-1}^2=(a_{m-i}^0-\left(a_{m-i-1}^2+b_{i-1}^1c_{m-i-1}^2\right))c_{m-i}^0.\notag
\end{align}
(4)$\text{Hilb}_{m_{\bullet}}(\tilde{R}/B)$ at points $(Q_i^m,Q_{i-1}^{m-1},Q_{i-1}^{m-2}, Q_{i-1}^{m-3})$ has a hypersurface singularity with local equation  $$(a^1_{m-i}-a^2_{m-i-1})(a^2_{m-i-1}-a^3_{m-i-2})b^3_{i-2}=\left\{d^0_{i-1}-d_{i-2}^3-\left(a^1_{m-i}-a^3_{m-i-2}\right)c_{m-i}^1b_{i-2}^3\right\}b_{i-1}^0.$$
\end{lemma}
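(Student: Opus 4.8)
\emph{Proof (sketch).}
The plan is to run, for a four-term chain, the same coefficient bookkeeping that establishes Theorems 4 and 5 and the Remark following Theorem 5. By the universal property, a point of $\text{Hilb}_{m_\bullet}(\tilde R/B)$ over an Artin local $\mathbb{C}$-algebra $S$ is a nested chain $I^0_s\subset I^1_s\subset I^2_s\subset I^3_s$ in $\tilde R_s$ with $\tilde R_s/I^j_s$ free of rank $m-j$ and each $I^j_s$ containing $xy-s$ for one fixed $s\in m_s$; near each of the four listed points every $I^j_s$ is a flat deformation of a monomial ideal $Q^{m-j}_\ast$, so I would write normalized generators $f^j,g^j$ exactly as in the proof of Theorem 4, with coefficients $a^j_\bullet,b^j_\bullet,c^j_\bullet,d^j_\bullet\in m_s$ whose superscript records the level. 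The first step is to impose flatness: by Theorem 4 applied at colength $m-j$, the $S$-freeness of $\tilde R_s/I^j_s$ is equivalent to the analogue of the relations \eqref{eq2} at that level, in particular the conic relation $b^j_{\mathrm{top}}c^j_{\mathrm{top}}=s$ together with the expression of every other coefficient of $f^j,g^j$ as a linear function of the two leading ones.

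Next I would impose the inclusions $I^j_s\subset I^{j+1}_s$ one step at a time. As in the proof of Theorem 5, since $1,x,\dots,x^{\ast},y,\dots,y^{\ast}$ is an $S$-free basis of $\tilde R_s/I^{j+1}_s$, rewriting $f^j$ and $g^j$ in that basis and setting the coefficients to zero produces the relations \eqref{eq3} when the $y$-index is unchanged from level $j$ to level $j+1$, and the relations \eqref{eq4} of the Remark when it drops by one; equivalently $f^j=(x+\lambda^j)f^{j+1}$, $g^j=g^{j+1}+\mu^j f^{j+1}$ in the first case, and $f^j=f^{j+1}+\nu^j g^{j+1}$, $g^j=(y+\rho^j)g^{j+1}$ in the second, where $\lambda^j,\mu^j,\nu^j,\rho^j\in m_s$ are, exactly as in Theorem 5 and the Remark, certain leading coefficients or differences of leading coefficients of the two ideals involved. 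Substituting these relations downward through the chain expresses the coefficients of $f^0,g^0$ in terms of those of the innermost ideal $I^3_s$ and the multipliers, while the ``formal manipulation'' identities of the type $c_{m-i-1}'=c_{m-i}(a_{m-i}-a_{m-i-1}')$ and $b_{i-2}'=(d_{i-1}-d_{i-2}')b_{i-1}$ propagate along the flag; combining these with the conic relations $b^j_{\mathrm{top}}c^j_{\mathrm{top}}=s$ (all sharing the same $s$, which one then eliminates) yields precisely the displayed local equation(s), the regular parameters being the surviving coefficients of $I^3_s$ together with the as-yet-unconstrained multipliers. In case (1) no index ever drops and the single surviving relation is linear in $s$, so the scheme is smooth; in cases (2) and (4) a single equation survives whose linear part vanishes at the distinguished point, so that point is a hypersurface singularity of the stated shape; in case (3) two essential equations survive, once one eliminates the defining relation for $c^1_{m-i-1}$.

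The only genuinely nonroutine point is in case (3), where one must verify that the two surviving equations form a regular sequence, so that the scheme is a complete intersection of codimension two and not of smaller codimension. I would argue this on the dense open locus where the leading coefficients $c^0_{m-i}$ and $c^2_{m-i-1}$ are units, on which the two equations can be solved for two of the multipliers so that the locus is visibly smooth of the expected codimension, and then treat the complementary closed locus directly; alternatively, following the argument for Theorem 5(iii), one recognizes the second equation as cutting out a flat, relative-dimension-one conic over the smooth scheme defined by the first, which already forces the complete-intersection property. The remaining hazard is purely organizational: which identity ``fires'' at each step depends on whether the $y$-index is constant or decreases there, so the three non-smooth cases require three slightly different chains of substitutions, and one must keep the multipliers $\lambda^j,\mu^j,\nu^j,\rho^j$ expressed consistently --- and check that each lies in $m_s$ --- before the final conic relations are combined.
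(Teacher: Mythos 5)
Your proposal follows essentially the same route as the paper's proof: normalized generators $f^j,g^j$ for each level, the flatness relations \eqref{eq2} at each colength, the nesting conditions imposed stepwise via \eqref{eq3} (constant $y$-index) or \eqref{eq4} (dropping $y$-index), elimination of redundant parameters, and a final count of surviving equations, with the regular-sequence check in case (3) handled much as the paper asserts it. No substantive differences or gaps.
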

\begin{proof}
 For a point  $(Q_i^m, Q_i^{m-1}, Q_i^{m-2}, Q_{i}^{m-3}) \in \text{Hilb}^0_{m_\bullet}(R)$,  consider  a pair $(I_s^0<I_s^1<I_s^2<I_s^3)$  flatly deforming $(Q_i^m, Q_i^{m-1}, Q_i^{m-2}, Q_i^{m-3})$ relative to B. Then we may assume that for some $s \in m_s$,  \;$I_s^k$ is generated by $xy-s$ and $f^k, g^k\;$ for $k=0,1,2,3$ with $a_j^{k}, \;b_j^{k},\; c_j^{k},\;d_j^{k}\in m_s$ where
\begin{align}
\notag&f^0=x^{m-i+1}+\sum^{m-i}_{j=0}a_j^0x^j+\sum^{i-1}_{j=1}b_j^0y^j,\;g^0=y^{i}+\sum^{m-i}_{j=0}c_j^0x^j+\sum^{i-1}_{j=1}d_j^0y^j,\\ \notag
&f^1=x^{m-i}+\sum^{m-i-1}_{j=0}a_j^1x^j+\sum^{i-1}_{j=1}b_j^1y^j,\;g^1=y^{i}+\sum^{m-i-1}_{j=0}c_j^1x^j+\sum^{i-1}_{j=1}d_j^1y^j,\\ \notag
&f^2=x^{m-i-1}+\sum^{m-i-2}_{j=0}a_j^2x^j+\sum^{i-1}_{j=1}b_j^2y^j,\;g^2=y^{i-1}+\sum^{m-i-2}_{j=0}c_j^2x^j+\sum^{i-1}_{j=1}d_j^2y^j,\\ \notag
&f^3=x^{m-i-2}+\sum^{m-i-3}_{j=0}a_j^3x^j+\sum^{i-1}_{j=1}b_j^3y^j,\;g^3=y^{i}+\sum^{m-i-3}_{j=0}c_j^3x^j+\sum^{i-1}_{j=1}d_j^3y^j.\\ \notag
\end{align}
Since each $I_s^k$ is a flat ideal of $R_s$,  the equations  \eqref{eq2} all the local possible parameters for the $\text{Hilb}_{m_{\bullet}}(\tilde{R}/B)$ at  the point $(Q_i^m, Q_i^{m-1}, Q_i^{m-2}, Q_{i}^{m-3})$ are \begin{align}
\notag&a_1^0,\ldots,a_{m-i}^0, d_1^0,\ldots,d_{i-1}^0, b_{i-1}^0,c_{m-i}^0,
a_1^1,\ldots, a_{m-i-1}^1, d_1^1,\ldots,d_{i-1}^1, b_{i-1}^1,c_{m-i-1}^1,\\
\notag&a_1^2,\ldots, a_{m-i-2}^2, d_1^2,\ldots,d_{i-1}^2, b_{i-1}^2,c_{m-i-2}^2,a_1^3,\ldots, a_{m-i-3}^3, d_1^3,\ldots,d_{i-1}^3, b_{i-1}^3,c_{m-i-3}^3, s.
\end{align}
Then let us consider the nested condition between ideals.  Since we have a condition $I_s^0<I_s^1$,  the equations \eqref{eq3} and in particular
\begin{center}
$c^1_{m-i-1}=c_{m-i}^0(a_{m-i}-a^1_{m-i-1}),\; (a_{m-i}^0-a^1_{m-i-1})b^1_{i-1}c_{m-i}^0=s$
\end{center}
implies that the parameters above can be reduced to
\begin{center}
$a_{m-i}^0, c_{m-i}^0, a_1^1,\ldots, a_{m-i-1}^1, d_1^1,\ldots,d_{i-1}^1, b_{i-1}^1,$\\
$a_1^2,\ldots, a_{m-i-2}^2, d_1^2,\ldots,d_{i-1}^2, b_{i-1}^2,c_{m-i-2}^2,a_1^3,\ldots, a_{m-i-3}^3, d_1^3,\ldots,d_{i-1}^3, b_{i-1}^3,c_{m-i-3}^3, s.$
\end{center}
with $$(a_{m-i}^0-a^1_{m-i-1})b^1_{i-1}c^0_{m-i}=s.$$
Since we have $I_s^1<I_s^2$, the equations  \eqref{eq3}  and in particular
\begin{center}
$c^2_{m-i-2}=c^1_{m-i-1}(a^1_{m-i-1}-a^2_{m-i-2}),\; (a^1_{m-i-1}-a^2_{m-i-2})b^2_{i-1}c^1_{m-i-1}=s.$
\end{center}
implies that the parameters can be more reduced to
\begin{center}
$ a_{m-i}^0, c_{m-i}^0, a_{m-i-1}^1,$\\$a_1^2,\ldots,a_{m-i-2}^2, d_1^2,\ldots,d_{i-1}^2, b_{i-1}^2, a_1^3,\ldots, a_{m-i-3}^3, d_1^3,\ldots,d_{i-1}^3, b_{i-1}^3,c_{m-i-3}^3, s$
\end{center}
with  a equation \begin{center}
        $b^2_{i-1}c_{m-i}^0(a_{m-i}^0-a^1_{m-i-1})(a^1_{m-i-1}-a^2_{m-i-2})=s.$
        \end{center}
  Similarly,  $\;I_s^2<I_s^3$ implies the equations  \eqref{eq3} and in particular \begin{center}
$c^3_{m-i-3}=c^2_{m-i-2}(a^2_{m-i-2}-a^3_{m-i-3}),\; (a^2_{m-i-2}-a^3_{m-i-3})b^3_{i-1}c^2_{m-i-2}=s.$ \end{center}
Thus the parameters can be reduced to \begin{center}
$ a_{m-i}^0, c_{m-i}^0,a_{m-i-1}^1,a_{m-i-2}^2, a_1^3,\ldots, a_{m-i-3}^3, d_1^3,\ldots, d_{i-1}^3, b_{i-1}^3, s.$ \end{center}
with \begin{center}   $(a^2_{m-i-2}-a^3_{m-i-3})b^3_{i-1}c_{m-i}^0(a_{m-i}^0-a^1_{m-i-1})(a^1_{m-i-1}-a^2_{m-i-2})=s.$ \end{center}
Hence $\text{Hilb}_{m_{\bullet}}(\tilde{R}/B)$ at $(Q_i^m, Q_i^{m-1}, Q_i^{m-2}, Q_{i}^{m-3})$ is smooth with $m+1$ regular parameters.
The flag \text{Hilb}ert scheme $\text{Hilb}^0_{m_{\bullet}}(R)$ at $(Q_i^m, Q_i^{m-1}, Q_i^{m-2}, Q_{i}^{m-3})$ is embedded in $\mathbb{A}^{m+1}$ with parameters
\begin{center}
$ a_{m-i}^0, c_{m-i}^0, a_{m-i-1}^1,a_{m-i-2}^2, a_1^3,\ldots, a_{m-i-3}^3, d_1^3,\ldots, d_{i-1}^3, b_{i-1}^3.$\\
\end{center}
with   one equation  $$(a_{m-i}^0-a^1_{m-i-1})(a^1_{m-i-1}-a^2_{m-i-2})(a^2_{m-i-2}-a^3_{m-i-3})b^3_{i-1}c_{m-i}^0=0.$$
At the point $(Q_i^m, Q_i^{m-1}, Q_i^{m-2}, Q_{i-1}^{m-3})\in \text{Hilb}_{m_{\bullet}}^0(R)$, 
let us consider $(I_s^0<I_s^1<I_s^2<I_s^3)$ flatly deforming  of ideals$(Q_i^m,Q_i^{m-1},Q_i^{m-2},Q_{i-1}^{m-3})$ relative to B. Then we may assume for some  $s \in m_s$,  $I_s^k$ is generated by $xy-s$ and $f^k, g^k$ for $k=1,2,3,4.$ with $a_j^k, b_j^k, c_j^k, d_j^k\in m_s$ where 
\begin{align}
\notag&f^0=x^{m-i+1}+\sum^{m-i}_{j=0}a_j^0x^j+\sum^{i-1}_{j=1}b_j^0y^j,\;g^0=y^{i}+\sum^{m-i}_{j=0}c_j^0x^j+\sum^{i-1}_{j=1}d_j^0y^j\\\notag
&f^1=x^{m-i}+\sum^{m-i-1}_{j=0}a_j^1x^j+\sum^{i-1}_{j=1}b_j^1y^j,\;g^1=y^{i}+\sum^{m-i-1}_{j=0}c_j^1x^j+\sum^{i-1}_{j=1}d_j^1y^j\\\notag
&f^2=x^{m-i-1}+\sum^{m-i-2}_{j=0}a_j^2x^j+\sum^{i-1}_{j=1}b_j^2y^j,\;g^2=y^{i}+\sum^{m-i-2}_{j=0}c_j^2x^j+\sum^{i-1}_{j=1}d_j^2y^j\\\notag
&f^3=x^{m-i-1}+\sum^{m-i-2}_{j=0}a_j^3x^j+\sum^{i-2}_{j=1}b_j^3y^j,\;g^3=y^{i-1}+\sum^{m-i-2}_{j=0}c_j^3x^j+\sum^{i-2}_{j=1}d_j^3y^j.\notag
\end{align}

Since each $I_s^k$ is a flat ideal of $R_s$,  from the equations  \eqref{eq2},  variables for the $\text{Hilb}_{m_{\bullet}}(R)$ at  $(Q_i^m, Q_i^{m-1}, Q_i^{m-2}, Q_{i-1}^{m-3})$ are
\begin{align}
\notag& a_1^0,\ldots,a_{m-i}^0, d_1^0,\ldots,d_{i-1}^0, b_{i-1}^0,c_{m-i}^0, a_1^1,\ldots, a_{m-i-1}^1, d_1^1,\ldots,d_{i-1}^1, b_{i-1}^1,c_{m-i-1}^1,\\ \notag& a_1^2,\ldots, a_{m-i-2}^2, d_1^2,\ldots,d_{i-1}^2, b_{i-1}^2,c_{m-i-2}^2, a_1^3,\ldots, a_{m-i-2}^3, d_1^3,\ldots,d_{i-2}^3, b_{i-2}^3,c_{m-i-2}^3,s.
\end{align}
Since $I_s^0<I_s^1$, that gives equations  \eqref{eq3} and in particular
\begin{center}
$c^1_{m-i-1}=c^0_{m-i}(a_{m-i}^0-a^1_{m-i-1}),\;  b_{i-1}^{0}=(a_{m-i}^{0}-a_{m-i-1}^{1})b_{i-1}^{1}, (a_{m-i}^0-a^1_{m-i-1})b^1_{i-1}c^0_{m-i}=s.$
\end{center} gives
\begin{center}
$a_{m-i}^0, c_{m-i}^0, a_1^1,\ldots, a_{m-i-1}^1, d_1^1,\ldots,d_{i-1}^1, b_{i-1}^1,c_{m-i-1}^1,$\\
$a_1^2,\ldots, a_{m-i-2}^2, d_1^2,\ldots,d_{i-1}^2, b_{i-1}^2,c_{m-i-2}^2, a_1^3,\ldots, a_{m-i-2}^3, d_1^3,\ldots,d_{i-2}^3, b_{i-2}^3,c_{m-i-2}^3, s.$
\end{center}
with $(a_{m-i}^0-a^1_{m-i-1})b^1_{i-1}c^0_{m-i}=s.$
Since we have relations $I_s^1<I_s^2$ and  $I_s^2<I_s^3$,   we have equations with different indices in \eqref{eq3} and in particular
$$c^2_{m-i-2}=c^1_{m-i-1}(a^1_{m-i-1}-a^2_{m-i-2}),$$
$$ b_{i-1}^{1}=(a_{m-i-1}^{1}-a_{m-i-2}^{2})b_{i-1}^{2},\; (a^1_{m-i-1}-a^2_{m-i-2})b_{i-1}c^1_{m-i-1}=s$$

\begin{center}
$c^2_{m-i-2}=(d^2_{i-1}-d^3_{i-2})c^3_{m-i-2},\; b_{i-2}^3=(d_{i-1}^{2}-d_{i-2}^{3})b_{i-1}^2.$
\end{center}
Consequently, the relative four flag \text{Hilb}ert scheme at  the point $(Q_i^m, Q_i^{m-1}, Q_i^{m-2}, Q_{i-1}^{m-3})$,
\begin{center}
$a_{m-i}^0,c_{m-i}^0,a^1_{m-i-1}, d^2_{i-1},b^2_{i-1}, a_1^3,\ldots a_{m-i-3}^3,a_{m-i-2}^3, d_1^3,\ldots,d_{i-2}^3,c_{m-i-2}^3$
\end{center}
  with the relation
   \begin{center}
 $(c^2_{m-i-2}=)c_{m-i}^0(a_{m-i}^0-a_{m-i-1}^1)(a_{m-i-1}^1-(a_{m-i-2}^3+b_{i-1}^2c_{m-i-2}^2))=(d_{i-1}^2-d_{i-2}^3)c_{m-i-2}^3.$\end{center}
The Flag \text{Hilb}ert scheme $\text{Hilb}_{m_{\bullet}}(R)$ at the point $(Q_i^m, Q_i^{m-1}, Q_i^{m-2}, Q_{i-1}^{m-3})$  we have \begin{center}
$(a_{m-i}^0-a_{m-i-1}^1)(a_{m-i-1}^1-(a_{m-i-2}^3+b_{i-1}^2c_{m-i-2}^2))\mathbf{ b_{i-1}^2} c_{m-i}^0=(d_{i-1}^2-d_{i-2}^3)\mathbf{ b_{i-1}^2}c_{m-i-2}^3=0.$\end{center}

(3) At the point $(Q_i^m, Q_{i}^{m-1}, Q_{i-1}^{m-2}, Q_{i-1}^{m-3})\in \text{Hilb}_{m_\bullet}^0(R)$. 
Let $(I_s^0<I_s^1<I_s^2<I_s^3)$ be flatly deforming $Q_i^m, Q_{i}^{m-1}, Q_{i-1}^{m-2},Q_{i-1}^{m-3}$. Then we may assume that for some $s \in m_s$,  $I_s^k$ is generated by $xy-s$ and $f^k, g^k$ for $k=1,2,3,4.$
\begin{align}
\notag&f^0=x^{m-i+1}+\sum^{m-i}_{j=0}a_j^0x^j+\sum^{i-1}_{j=1}b_j^0y^j,\;g^0=y^{i}+\sum^{m-i}_{j=0}c_j^0x^j+\sum^{i-1}_{j=1}d_j^0y^j\\\notag
&f^1=x^{m-i}+\sum^{m-i-1}_{j=0}a_j^1x^j+\sum^{i-1}_{j=1}b_j^1y^j, \;g^1=y^{i}+\sum^{m-i-1}_{j=0}c_j^1x^j+\sum^{i-1}_{j=1}d_j^1y^j\\\notag
&f^2=x^{m-i}+\sum^{m-i-1}_{j=0}a_j^2x^j+\sum^{i-2}_{j=1}b_j^2y^j,\;g^2=y^{i-1}+\sum^{m-i-1}_{j=0}c_j^2x^j+\sum^{i-2}_{j=1}d_j^2y^j\\\notag
&f^3=x^{m-i-1}+\sum^{m-i-2}_{j=0}a_j^3x^j+\sum^{i-2}_{j=1}b_j^3y^j,\;g^3=y^{i-1}+\sum^{m-i-2}_{j=0}c_j^3x^j+\sum^{i-2}_{j=1}d_j^3y^j\notag
\end{align}
The relative four flag \text{Hilb}ert scheme at  the point $(Q_i^m, Q_i^{m-1}, Q_{i-1}^{m-2}, Q_{i-1}^{m-3})$,
\begin{align}
\notag&a_1^0,\ldots, a_{m-i}^0, d_1^0,\ldots, d_{i-1}^0, b_{i-1}^0,c_{m-i}^0,
a_1^1,\ldots, a_{m-i-1}^1, d_1^1,\ldots, d_{i-1}^1, b_{i-1}^1,c_{m-i-1}^1,\\
\notag&a_1^2,\ldots, a_{m-i-1}^2, d_1^2,\ldots, d_{i-2}^2, b_{i-2}^2,c_{m-i-1}^2,
a_1^3,\ldots, a_{m-i-2}^3, d_1^3,\ldots, d_{i-2}^3, b_{i-2}^3,c_{m-i-2}^3, s
\end{align}
Since $I_s<I_s^1$, by equations \eqref{eq3} and in particular
$$c^1_{m-i-1}=c_{m-i}^0(a_{m-i}^0-a^1_{m-i-1}),\; (a_{m-i}-a^1_{m-i-1})b^1_{i-1}c^0_{m-i}=s$$
gives \begin{center}
$a_{m-i}^0,c_{m-i}^0, a_1^1,\ldots, a_{m-i-1}^1, d_1^1,\ldots, d_{i-1}^1, b_{i-1}^1,$\\
$ a_1^2,\ldots, a_{m-i-1}^2, d_1^2,\ldots, d_{i-2}^2, b_{i-2}^2,c_{m-i-1}^2,  a_1^3,\ldots, a_{m-i-2}^3, d_1^3,\ldots, d_{i-2}^3, b_{i-2}^3,c_{m-i-2}^3, s.$
\end{center}
 with the relation $(a_{m-i}^0-a^1_{m-i-1})b_{i-1}^1c_{m-i}^0=s$.
 Since $I_s^1<I_s^2$, by equations \eqref{eq3} and in particular \begin{center}
$c^1_{m-i-1}=(d_{i-1}^1-d_{i-2}^2)c_{m-i-1}^2,\;d_{i-2}^1=(d_{i-1}^1-d_{i-2}^2)d_{i-2}^2$
\end{center}

\begin{center}
$a_{m-i}^0,c_{m-i}^0, d_{i-1}^1, b_{i-1}^1,$\\
$a_1^2,\ldots, a_{m-i-1}^2, d_1^2,\ldots, d_{i-2}^2, b_{i-2}^2,c_{m-i-1}^2, a_1^3,\ldots, a_{m-i-2}^3, d_1^3,\ldots, d_{i-2}^3, b_{i-2}^3,c_{m-i-2}^3, s.$
\end{center}
 with the relation \begin{center}
  $(a_{m-i}^0-a_{m-i-1}^2-b_{i-1}^1c_{m-i-1}^2)b^1_{i-1}c_{m-i}^0=s$ and $c_{m-i}^0(a_{m-i}^0-a_{m-i-1}^2-b_{i-1}^1c_{m-i-1}^2)=(d_{i-1}^1-d_{i-2}^2)c_{m-i-1}^2$.
 \end{center}
Since $I_s^2<I_s^3$, by equations \eqref{eq3}  and in particular \begin{center}
$c^3_{m-i-2}=c^2_{m-i-1}(a^2_{m-i-1}-a^3_{m-i-2}),\; (a^2_{m-i-1}-a^3_{m-i-2})b^3_{i-2}c^2_{m-i-1}=s$\end{center}
gives

\begin{center}
$a_{m-i}^0,c_{m-i}^0, d_{i-1}^1, b_{i-1}^1, a_{m-i-1}^2, c_{m-i-1}^2, a_1^3,\ldots, a_{m-i-2}^3, d_1^3,\ldots, d_{i-2}^3, b_{i-2}^3, s$
\end{center} with the relation 
\begin{center}
$(a_{m-i}^0-a_{m-i-1}^2-b_{i-1}^1c_{m-i-1}^2)b^1_{i-1}c_{m-i}^0=s,$\\
$( b^2_{i-2}=\;\;)(d_{i-1}^1-(d_{i-2}^3+c_{m-i-1}^2b_{i-2}^3))b_{i-1}^1=(a_{m-i-1}^2-a_{m-i-2}^3)b^3_{i-2}\; and\;$\\
 $(c_{m-i-1}^1= \;\;)(d_{i-1}^1-(d_{i-2}^3+c_{m-i-1}^2b_{i-2}^3))c_{m-i-1}^2=(a_{m-i}^0-(a_{m-i-1}^2+b_{i-1}^1c_{m-i-1}^2))c_{m-i}^0.$ \end{center}

Hence we get a final one
\begin{center}
$a_{m-i}^0,c_{m-i}^0, d_{i-1}^1, b_{i-1}^1, a_{m-i-1}^2, c_{m-i-1}^2, a_1^3,\ldots, a_{m-i-2}^3, d_1^3,\ldots, d_{i-2}^3, b_{i-2}^3$\\
\end{center}
with two relations \begin{center} $( b^2_{i-2}=\;\;)(d_{i-1}^1-(d_{i-2}^3+c_{m-i-1}^2b_{i-2}^3))b_{i-1}^1=(a_{m-i-1}^2-a_{m-i-2}^3)b^3_{i-2}$ \end{center}
and \begin{center}
$(c_{m-i-1}^1= \;\;)(d_{i-1}^1-(d_{i-2}^3+c_{m-i-1}^2b_{i-2}^3))c_{m-i-1}^2=(a_{m-i}^0-(a_{m-i-1}^2+b_{i-1}^1c_{m-i-1}^2))c_{m-i}^0.$\end{center}
It is complete intersection since \begin{center}
$(d_{i-1}^1-(d_{i-2}^3+c_{m-i-1}^2b_{i-2}^3))b_{i-1}^1-(a_{m-i-1}^2-a_{m-i-2}^3)b^3_{i-2},\; (d_{i-1}^1-(d_{i-2}^3+c_{m-i-1}^2b_{i-2}^3))c_{m-i-1}^2-(a_{m-i}^0-(a_{m-i-1}^2+b_{i-1}^1c_{m-i-1}^2))c_{m-i}^0$ \end{center} 
form a regular sequence in the polynomial ring $\mathbb{C}[a_{m-i}^0,c_{m-i}^0,..
  d_1^3,\ldots, d_{i-2}^3, b_{i-2}^3 ]$

The flag \text{Hilb}ert scheme $\text{Hilb}_{m_{\bullet}}(R)$ at this point has a local equation\\
 $(a_{m-i}^0-(a_{m-i-1}^2+b_{i-1}^1c_{m-i-1}^2))b^1_{i-1}c_{m-i}^0=(d_{i-1}^1-(d_{i-2}^3+c_{m-i-1}^2b_{i-2}^3))b^1_{i-1}c_{m-i-1}^2$ $=(a^2_{m-i-1}-a^3_{m-i-2})b^3_{i-2}c^2_{m-i-1}=0$
Let $Q_i^m=(x^{m-i+1},y^{i}),\; Q_i^{m-1}=(x^{m-i},y^{i}), \;Q_i^{m-2}=(x^{m-i-1},y^{i}),\; Q_{i}^{m-3}=(x^{m-i-2},y^{i})$. Then  $(Q_i^m, Q_i^{m-1}, Q_i^{m-2}, Q_{i}^{m-3}) \in \text{Hilb}^0_{m_\bullet}(R)$. Consider  a pair $(\;I_s^0<I_s^1<I_s^2<I_s^3)$ flatly deforming $(Q_i^m, Q_i^{m-1}, Q_i^{m-2}, Q_i^{m-3})$ relative to B. Then we may assume that for some $s \in m_s$,  $I_s^k$ is generated by $xy-s$ and $f^k, g^k$ for $k=1,2,3,4.$

(4) At the point $(Q_i^m, Q_{i-1}^{m-1}, Q_{i-1}^{m-2}, Q_{i-1}^{m-3})\in \text{Hilb}_{m_\bullet}^0(R)$. 
Let $(I_s^0<I_s^1<I_s^2<I_s^3)$ be flatly deforming $Q_i^m<
Q_{i-1}^{m-1}<Q_{i-1}^{m-2}<Q_{i-1}^{m-3}$. Then we may assume that for some $s \in m_s$,  $I_s^k$ is generated by $xy-s$ and $f^k, g^k$ for $k=1,2,3,4.$  where \begin{align}
\notag&f^0=x^{m-i+1}+\sum^{m-i}_{j=0}a_j^0x^j+\sum^{i-1}_{j=1}b_j^0y^j,\;g^0=y^{i}+\sum^{m-i}_{j=0}c_j^0x^j+\sum^{i-1}_{j=1}d_j^0y^j,\\
\notag&f^1=x^{m-i+1}+\sum^{m-i}_{j=0}a_j^1x^j+\sum^{i-2}_{j=1}b_j^1y^j,\;g^1=y^{i-1}+\sum^{m-i}_{j=0}c_j^1x^j+\sum^{i-2}_{j=1}d_j^1y^j,\\
\notag&f^2=x^{m-i}+\sum^{m-i-1}_{j=0}a_j^2x^j+\sum^{i-2}_{j=1}b_j^2y^j,\;g^2=y^{i-1}+\sum^{m-i-1}_{j=0}c_j^2x^j+\sum^{i-2}_{j=1}d_j^2y^j,\\
\notag&f^3=x^{m-i-1}+\sum^{m-i-2}_{j=0}a_j^3x^j+\sum^{i-2}_{j=1}b_j^3y^j,\;g^3=y^{i-1}+\sum^{m-i-2}_{j=0}c_j^3x^j+\sum^{i-2}_{j=1}d_j^3y^j.
\end{align}

 \eqref{eq2} tells that all possible parameters for the relative four flag \text{Hilb}ert scheme at  this point are
\begin{center}
$a_1^0,\ldots, a_{m-i}^0, d_1^0,\ldots,d_{i-1}^0, b_{i-1}^0,c_{m-i}^0,a_1^1,\ldots, a_{m-i}^1, d_1^1,\ldots, d_{i-2}^1, b_{i-2}^1,c_{m-i}^1,$\\
$ a_1^2,\ldots,a_{m-i-1}^2, d_1^2,\ldots, d_{i-2}^2, b_{i-2}^2,c_{m-i-1}^2, a_1^3,\ldots, a_{m-i-2}^3, d_1^3,\ldots, d_{i-2}^3, b_{i-2}^3,c_{m-i-2}^3, s.$
\end{center}
Since $I_s^0<I_s^1$, 
by equations \eqref{eq3} and in particular \begin{center}
$c_{m-i}^0=(d_{i-1}^0-d_{i-2}^1)c_{m-i}^1,\;b_{i-2}^1=(d_{i-1}^0-d_{i-2}^1)b_{i-1}^0$
\end{center}
allow us to eliminate parameters, above will be reduced to
\begin{center}
$d_{i-1}^0, b_{i-1}^0,  a_1^1,\ldots, a_{m-i}^1, d_1^1,\ldots, d_{i-2}^1, b_{i-2}^1,c_{m-i}^1,$\\
$ a_1^2,\ldots, a_{m-i-1}^2, d_1^2,\ldots,d_{i-2}^2, b_{i-2}^2,c_{m-i-1}^2, a_1^3,\ldots, a_{m-i-2}^3, d_1^3,\ldots, d_{i-2}^3, b_{i-2}'^3,c_{m-i-2}^3,s$
\end{center}
Since $I_s^1<I_s^2$, we have  $c^2_{m-i-1}=c^1_{m-i}(a^1_{m-i}-a^2_{m-i-1})$,  $a_j^1=a_{j-1}^2-(a_{m-i}^1-a_{m-i-1}^2)a_j^2$ \;and\; $d_j^1=d_j^2+c_{m-i}^1b_j^2$ for $j=1,..i-2$ and in particular
\begin{center}
$c^2_{m-i-1}=c^1_{m-i}(a^1_{m-i}-a^2_{m-i-1}),\; b_{i-2}^1=(a_{m-i}^1-a_{m-i-1}^2)b_{i-2}^2$
\end{center} allow us to eliminate to less parameters
\begin{center}
$d_{i-1}^0, b_{i-1}^0, a_{m-i}^1, c_{m-i}^1,$\\ $a_1^2,\ldots, a_{m-i-1}^2, d_1^2,\ldots, d_{i-2}^2, b_{i-2}^2,$
$ a_1^3,\ldots, a_{m-i-2}^3, d_1^3,\ldots, d_{i-2}^3, b_{i-2}^3,c_{m-i-2}^3 ,s. $
\end{center}
Since $I_s^2<I_s^3$, we have  $c^3_{m-i-2}=c^2_{m-i-1}(a^2_{m-i-1}-a^3_{m-i-2})$, $a_j^2=a_{j-1}^3-(a_{m-i-1}^2-a_{m-i-2}^3)a_j^3, j=1,..,m-i-2$ and $d_j^2=d_j^3+c_{m-i}^2b_j^3, j=1,..i-2$(OR by \eqref{eq2}) and in particular
$$c^3_{m-i-2}=c^2_{m-i-1}(a^2_{m-i-1}-a^3_{m-i-2}),\;b_{i-2}^2=(a_{m-i-1}^2-a_{m-i-2}^3)b_{i-2}^3.$$

$$d^0_{i-1}, b^0_{i-1}, a^1_{m-i}, c^1_{m-i},  a^2_{m-i-1}, a^3_1,\ldots, a^3_{m-i-2}, d^3_1,\ldots, d^3_{i-2}, b^3_{i-2}$$
with one relation$(b_{i-2}^1=)$ $$(a^1_{m-i}-a^2_{m-i-1})(a^2_{m-i-1}-a^3_{m-i-2})b^3_{i-2}=\{d^0_{i-1}-d_{i-2}^3-(a^1_{m-i}-a^3_{m-i-2})c_{m-i}^1b_{i-2}^3\}b_{i-1}^0.$$
Thus the relative flag \text{Hilb}ert scheme $\text{Hilb}_{m_{\bullet}}(\tilde{R}/B)$ at the points $(Q_i^m, Q_i^{m-1}, Q_i^{m-2}, Q_{i-1}^{m-3})$ has a hypersurface singularity 
$$(a^1_{m-i}-a^2_{m-i-1})(a^2_{m-i-1}-a^3_{m-i-2})b^3_{i-2}=\{d^0_{i-1}-d_{i-2}^3-(a^1_{m-i}-a^3_{m-i-2})c_{m-i}^1b_{i-2}^3\}b_{i-1}^0.$$
For the flag \text{Hilb}ert scheme $\text{Hilb}_{m_{bullet}}(R)$ at this points
$$(a^1_{m-i}-a^2_{m-i-1})(a^2_{m-i-1}-a^3_{m-i-2})b^3_{i-2}c_{m-i}^1$$
$$=\{d^0_{i-1}-d_{i-2}^3-(a^1_{m-i}-a^3_{m-i-2})c_{m-i}^1b_{i-2}^3\}b_{i-1}^0c_{m-i}^1=0.$$
$$d^0_{i-1}, b^0_{i-1}, a^1_{m-i}, c^1_{m-i}, a^2_{m-i-1}, a^3_1,\ldots, a^3_{m-i-2}, d^3_1,\ldots, d^3_{i-2}, b^3_{i-2}$$
with the relation
$$(a^1_{m-i}-a^2_{m-i-1})(a^2_{m-i-1}-a^3_{m-i-2})b^3_{i-2}=\{d^0_{i-1}-d_{i-2}^3-(a^1_{m-i}-a^3_{m-i-2})c_{m-i}^1b_{i-2}^3\}b_{i-1}^0$$
Thus the relative flag \text{Hilb}ert scheme $\text{Hilb}_{m_{\bullet}}(\tilde{R}/B)$ at the points $(Q_i^m, Q_i^{m-1}, Q_i^{m-2}, Q_{i-1}^{m-3})$ has a hypersurface singularity 
$$(a^1_{m-i}-a^2_{m-i-1})(a^2_{m-i-1}-a^3_{m-i-2})b^3_{i-2}=\{d^0_{i-1}-d_{i-2}^3-(a^1_{m-i}-a^3_{m-i-2})c_{m-i}^1b_{i-2}^3\}b_{i-1}^0.$$For the flag \text{Hilb}ert scheme at this point, it has a local euqation
$$(a^1_{m-i}-a^2_{m-i-1})(a^2_{m-i-1}-a^3_{m-i-2})b^3_{i-2}c_{m-i}^1$$ $$=\{d^0_{i-1}-d_{i-2}^3-(a^1_{m-i}-a^3_{m-i-2})c_{m-i}^1b_{i-2}^3\}b_{i-1}^0c_{m-i}^1=0.$$
\end{proof}


\subsection{In General}

Set $m_{\bullet}=(m,m-1,\ldots,m-n+1)$
Consider the sequence of ideals $\{Q^k_j\}_{j,k}$ where the upper index $\{k\}$ is decreasing by one and the lower index $\{j\}$ is decreasing by one or equal.
Let $A_h$ be the sequence of ideals of $\{Q^k_j\}$ with  the lower indices equal and $B_h$ with the lower indices decreasing by one. We will call these as the block A and the block B and h gives  the position of the block in the sequence. Let $|A_h|$ and $|B_h|$ be the length of sequence of ideals $\{Q^k_j\}$.
We will choose the block A as long as possible such that the length of  the block A is at least 2.

\begin{lemma} $\text{Hilb}_{m_{\bullet}}(\tilde{R}/B)$ at the point $A_1=(Q_i^m,Q_i^{m-1},\ldots,, Q_i^{m-n},Q_i^{m-n+1})$\;with\;$|A_1|=n$\; is m+1 dimensional smooth scheme.\\
\begin{proof}

$$a_{m-i}^0,c_{m-i}^0, a_{m-i-1}^{1}, a_{m-i-2}^{2},a_{m-i-3}^{3},\ldots, a_{m-i-n+2}^{n-2}, a_{1}^{n-1},\ldots, a_{m-i-n+1}^{n-1},$$ $$d_1^{n-1},d_2^{n-1},\ldots,d^{n-1}_{i-1},b^{n-1}_{i-1}$$
are regular parameters.
\end{proof}
\end{lemma}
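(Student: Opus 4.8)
The plan is to run, for an $n$-step chain all of whose members carry the same lower index $i$, the very elimination carried out above in the four-step case at $(Q^m_i,Q^{m-1}_i,Q^{m-2}_i,Q^{m-3}_i)$, iterating it along the chain. First I would fix a flat deformation $I^0_s<I^1_s<\cdots<I^{n-1}_s$ of $(Q^m_i,Q^{m-1}_i,\ldots,Q^{m-n+1}_i)$ relative to $B$: for some $s\in m_s$ each $I^k_s$ contains $xy-s$ and is generated by
\begin{align}
\notag f^k=x^{m-i-k+1}+\sum_{j=0}^{m-i-k}a^k_jx^j+\sum_{j=1}^{i-1}b^k_jy^j,\qquad g^k=y^i+\sum_{j=0}^{m-i-k}c^k_jx^j+\sum_{j=1}^{i-1}d^k_jy^j,
\end{align}
$k=0,\ldots,n-1$, with all coefficients in $m_s$. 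By the flatness relations \eqref{eq2}, applied to each $I^k_s$ with $m$ replaced by $m-k$, the a priori parameters are $a^k_1,\ldots,a^k_{m-i-k},d^k_1,\ldots,d^k_{i-1},b^k_{i-1},c^k_{m-i-k}$ for every $k$, together with $s$.

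Next I would impose the nesting conditions $I^k_s<I^{k+1}_s$ one at a time, for $k=0,1,\ldots,n-2$, each governed by \eqref{eq3} read with $(m,i)$ replaced by $(m-k,i)$. As in the four-step case, for each $k$ these relations express every level-$k$ coefficient, apart from $a^k_{m-i-k}$ (and, when $k=0$, $c^0_{m-i}$), in terms of the coefficients of level $k+1$ and of $c^0_{m-i}$; in particular they include
\begin{align}
\notag b^k_{i-1}=\bigl(a^k_{m-i-k}-a^{k+1}_{m-i-k-1}\bigr)b^{k+1}_{i-1},\qquad c^{k+1}_{m-i-k-1}=c^k_{m-i-k}\bigl(a^k_{m-i-k}-a^{k+1}_{m-i-k-1}\bigr),
\end{align}
so the leading coefficient $c^{k+1}_{m-i-k-1}$ of the next level is killed as well, and they yield the equation $s=\bigl(a^k_{m-i-k}-a^{k+1}_{m-i-k-1}\bigr)b^{k+1}_{i-1}c^k_{m-i-k}$ (the way the superfluous relations of \eqref{eq2} were shown to follow from the others). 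Propagating this last equation forward by means of the first displayed relation, after all $n-1$ nestings the only free parameters left are
\begin{align}
\notag a^0_{m-i},\,c^0_{m-i},\,a^1_{m-i-1},\,a^2_{m-i-2},\,\ldots,\,a^{n-2}_{m-i-n+2},\,a^{n-1}_1,\ldots,a^{n-1}_{m-i-n+1},\,d^{n-1}_1,\ldots,d^{n-1}_{i-1},\,b^{n-1}_{i-1},
\end{align}
and the only surviving relation is
$$\Bigl(\prod_{k=0}^{n-2}\bigl(a^k_{m-i-k}-a^{k+1}_{m-i-k-1}\bigr)\Bigr)b^{n-1}_{i-1}c^0_{m-i}=s.$$

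Counting this list gives $2+(n-2)+\bigl((m-i-n+1)+(i-1)+1\bigr)=m+1$ parameters. Since $s=t$ is the coordinate on $B$ and appears by itself on the right of the surviving relation, that relation only expresses $t$ as a polynomial in the other $m+1$ parameters; eliminating it shows that, formally along $A_1$, $\text{Hilb}_{m_{\bullet}}(\tilde{R}/B)$ is the spectrum of a power series ring in the displayed $m+1$ variables, hence smooth and $(m+1)$-dimensional over $\mathbb{C}$, with those as regular parameters.

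The work is purely bookkeeping, and that is where I expect the only difficulty. One has to check that at each of the $n-1$ nesting steps the running parameter set shrinks by exactly the claimed amount — each level $k$ collapsing to its leading $a$-coefficient, the bottom level $n-1$ additionally keeping its non-leading $a$'s, its $d$'s and its top $b$, and level $0$ keeping $c^0_{m-i}$ — and that every identity thrown up by \eqref{eq3} is either used to eliminate a coefficient or already entailed by \eqref{eq2} together with the identities recorded so far. Since a single nesting step here is exactly the one performed explicitly for $n=4$ in part (1) above, the induction reduces to verifying that this cancellation pattern repeats verbatim at every step.
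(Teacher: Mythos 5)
Your proposal is correct and follows exactly the route the paper intends: iterating the elimination of Lemma 8(1) along the chain, using the relations $b^k_{i-1}=(a^k_{m-i-k}-a^{k+1}_{m-i-k-1})b^{k+1}_{i-1}$ and $c^{k+1}_{m-i-k-1}=c^k_{m-i-k}(a^k_{m-i-k}-a^{k+1}_{m-i-k-1})$ from \eqref{eq3} to collapse each intermediate level to its leading $a$-coefficient, arriving at the same $m+1$ regular parameters and the single relation $\prod_{k=0}^{n-2}(a^k_{m-i-k}-a^{k+1}_{m-i-k-1})\,b^{n-1}_{i-1}c^0_{m-i}=s$ that merely determines $t$. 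In fact your write-up supplies the bookkeeping that the paper's proof omits entirely (it only states the parameter list), so there is nothing to correct.
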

Let  $A_1A_2$ be  $(Q_i^m,Q_i^{m-1},\ldots Q_i^{m-k+1},Q_{i-1}^{m-k},\ldots,Q_{i-1}^{m-n+1}),\; k\geqslant 2$\; with\; $|A_1|=k$\;and\; $|A_1|+|A_2|=n$
\begin{lemma}
$\text{Hilb}_{m_{\bullet}}(\tilde{R}/B)$ at the point $A_1A_2$  is locally complete intersection embedded in $\mathbb{A}^{m+4}$ with two equations [AAC] (Between block A and block A, we have one equation coming from c)
\begin{align}
&(d_{i-1}^{k-1}-d^k_{i-2})b^{k-1}_{i-1}\notag\\&=\prod^{n-2}_{j=k}(a_{m-i-j+1}^j-a_{m-i-j}^{j+1})b^{n-1}_{i-2}
\notag
\end{align}
 and [AAB] (Between block A and block A coming from b) 
\begin{align}
&(d_{i-1}^{k-1}-d_{i-2}^{k})c_{m-i-k+1}^k\notag\\&=\prod^{k-3}_{j=0}(a_{m-i-j}^j-a_{m-i-j-1}^{j+1})\left(a_{m-i-k+2}^{k-2}-a^{k-1}_{m-i-k+1}\right)c_{m-i}^0.\notag 
\end{align}
where $d_{i-2}^k$ is as below and $a^{k-1}_{m-i-k+1}=a_{m-i-k+1}^{k}+b_{i-1}^{k-1}c_{m-i-k+1}^k$.
 \end{lemma}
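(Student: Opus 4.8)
The plan is to compute the local ring of $\text{Hilb}_{m_{\bullet}}(\tilde R/B)$ at the point $A_1A_2$ by the parameter-elimination method already used in the proof of Theorem 6, in Remark 5 and in Lemma 8(3), now carried out for an $n$-term chain having exactly one descent of the lower index (from $i$ to $i-1$) occurring at position $k$. Fix an Artin local $\mathbb C$-algebra $S$ and a flat relative $S$-deformation $I_s^0<I_s^1<\cdots<I_s^{n-1}$ of $(Q_i^m,\dots,Q_i^{m-k+1},Q_{i-1}^{m-k},\dots,Q_{i-1}^{m-n+1})$, writing each $I_s^j$ as $(xy-s,f^j,g^j)$ in normal form with all $a^j_\bullet,b^j_\bullet,c^j_\bullet,d^j_\bullet\in m_s$. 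By \eqref{eq2} the initial list of flatness parameters is, at each level $j$, the expected $a^j$'s and $d^j$'s together with the top pair $b^j_{\mathrm{top}},c^j_{\mathrm{top}}$, subject only to $b^j_{\mathrm{top}}c^j_{\mathrm{top}}=s$. One then imposes the $n-1$ inclusions one at a time and eliminates.

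For an inclusion $I_s^j\subset I_s^{j+1}$ with $0\le j\le k-2$ the two ideals share the $y$-degree $i$, so this is exactly the situation of \eqref{eq3}: it forces $f^j=(x+a^j_{m-i-j}-a^{j+1}_{m-i-j-1})f^{j+1}$ and $g^j=g^{j+1}+c^j_{m-i-j}f^{j+1}$, kills every level-$j$ parameter except $a^j_{m-i-j}$ and $c^j_{m-i-j}$, and leaves the identity $c^{j+1}_{m-i-j-1}=c^j_{m-i-j}(a^j_{m-i-j}-a^{j+1}_{m-i-j-1})$ together with the relative relation in which the accumulated product times $b^{j+1}_{i-1}c^0_{m-i}$ equals $s$. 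Telescoping over $j=0,\dots,k-2$ turns $c^{k-1}_{m-i-k+1}$ into $\prod_{j=0}^{k-3}(a^j_{m-i-j}-a^{j+1}_{m-i-j-1})\cdot(a^{k-2}_{m-i-k+2}-a^{k-1}_{m-i-k+1})\cdot c^0_{m-i}$, and leaves, as coordinates for the head of the chain, $c^0_{m-i}$, the difference variables $a^j_{m-i-j}$ for $0\le j\le k-2$, and the level-$(k-1)$ data of $I_s^{k-1}$.

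Next, the single inclusion $I_s^{k-1}\subset I_s^k$ drops the $y$-degree from $i$ to $i-1$, so it is governed by Remark 5, i.e.\ equations \eqref{eq4}: $f^{k-1}=f^k+b^{k-1}_{i-1}g^k$ and $g^{k-1}=(y+d^{k-1}_{i-1}-d^k_{i-2})g^k$, which eliminates level-$(k-1)$ parameters down to $b^{k-1}_{i-1}$ and $d^{k-1}_{i-1}$, produces the leftover identities $b^k_{i-2}=(d^{k-1}_{i-1}-d^k_{i-2})b^{k-1}_{i-1}$ and $c^{k-1}_{m-i-k+1}=(d^{k-1}_{i-1}-d^k_{i-2})c^k_{m-i-k+1}$, and from the $x$-coefficient comparison also the correction $a^{k-1}_{m-i-k+1}=a^k_{m-i-k+1}+b^{k-1}_{i-1}c^k_{m-i-k+1}$, which must be fed back into the head-of-chain output. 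For $k\le j\le n-2$ the inclusions again preserve the $y$-degree (now $i-1$), so \eqref{eq3} applies once more and telescoping propagates $b^k_{i-2}$ to $\prod_{j=k}^{n-2}(a^j_{m-i-j+1}-a^{j+1}_{m-i-j})\cdot b^{n-1}_{i-2}$, while $d^k_{i-2}$ picks up its own composite expression in the level-$(n-1)$ data. Substituting the two telescoped expressions into the two leftover identities from the descent produces precisely [AAC] and [AAB], and the relative relation survives as one further equation linear in $s$, so that, exactly as in Lemma 8(3), near $A_1A_2$ the scheme $\text{Hilb}_{m_{\bullet}}(\tilde R/B)$ is the subscheme of $\mathbb A^{m+4}$ on the surviving parameters cut out by [AAC], [AAB] and that $s$-relation.

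Finally, one checks that [AAC] and [AAB] (the $s$-relation being linear in $s$ and harmless) form a regular sequence in the polynomial ring on the surviving coordinates, which then gives the locally complete intersection statement with the dimension of Lemma 10. I would verify this with a leading-monomial argument: [AAC] contains the monomial $d^{k-1}_{i-1}b^{k-1}_{i-1}$ in variables absent from its product term $\prod(a^\bullet-a^\bullet)\,b^{n-1}_{i-2}$, hence is irreducible and a nonzerodivisor, and modulo [AAC] the equation [AAB], whose "pure" part $d^{k-1}_{i-1}c^k_{m-i-k+1}$ lives in yet another set of variables, remains a nonzerodivisor. The main obstacle is the honest bookkeeping across the descent — faithfully carrying the correction $a^{k-1}_{m-i-k+1}=a^k_{m-i-k+1}+b^{k-1}_{i-1}c^k_{m-i-k+1}$ and the composite $d^k_{i-2}$ through both telescopes so that the final relations come out in the stated form — and then confirming that those two relations, although they share the factor $d^{k-1}_{i-1}-d^k_{i-2}$ on one side, acquire no common factor overall, which is what makes the regular-sequence check genuine rather than automatic; as a cross-check one can drop the top ideal $Q_i^m$ and use that $\text{Hilb}_{m_{\bullet}}(\tilde R/B)\to\text{Hilb}_{m_{\bullet}'}(\tilde R/B)$ is a flat local complete intersection morphism of relative dimension one, reducing to a shorter $A_1A_2$-configuration.
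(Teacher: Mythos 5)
Your proposal follows essentially the same route as the paper's proof: eliminate parameters inclusion by inclusion, using \eqref{eq3} for the steps inside each block and \eqref{eq4} (Remark 5) at the single descent from lower index $i$ to $i-1$, telescope the resulting $b$- and $c$-identities through the two blocks while carrying the corrections $a^{k-1}_{m-i-k+1}=a^k_{m-i-k+1}+b^{k-1}_{i-1}c^k_{m-i-k+1}$ and the composite $d^k_{i-2}$, and read off [AAC] and [AAB] together with the surviving coordinates. Your explicit leading-monomial regular-sequence check is a correct supplement to the paper's bare assertion of the complete-intersection property, not a different method.
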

\begin{proof} There are regular parameters(see the table below)

 $$a^0_{m-i},c^0_{m-i}, a^1_{m-i-1},\ldots, a^{k-2}_{m-i-k+2}, d^{k-1}_{i-1},b^{k-1}_{i-1},  a^k_{m-i-k+1},c^k_{m-i-k+1}, $$ $$a^{k+1}_{m-i-k},a^{k+2}_{m-i-k-1},\ldots,a^{n-2}_{m-n-i+3}, a_1^{n-1},\ldots, a_{m-n-i+2}^{n-1},d_1^{n-1},\ldots,d_{i-2}^{n-1},b_{i-2}^{n-1}$$
with the equations
$$(b_{i-2}^k=)( d_{i-1}^{k-1}-d_{i-2}^k)b^{k-1}_{i-1}=\prod^{n-2}_{j=k}(a_{m-i-j+1}^j-a_{m-i-j}^{j+1})b^{n-1}_{i-2}$$ %
and
$$(c^{k-1}_{m-k-i+1}=) ( d_{i-1}^{k-1}-d_{i-2}^k)b_{i-2}^{n-1}c_{m-i-k+1}^k$$
$$=\prod^{k-3}_{j=0}(a_{m-i-j}^j-a_{m-i-j-1}^{j+1})(a_{m-i-k+2}^{k-2}-(a_{m-i-k+1}^{k}+b_{i-1}^{k-1}c_{m-i-k+1}^k))c_{m-i}^0.$$
and in fact it is local complete intersection where $d^k_{i-2}$  is as below.
Using \eqref{eq3} repeatedly, we can get
 $$\mathbf{d^{k}_{i-2}=}$$  $$=d^{k+d}_{i-2}+\sum_{l=k}^{d+k-1} \prod_{j\geqslant k, j\neq l}^{d+k-1}(a^{j}_{m-i-j+1}-a^{j+1}_{m-i-j}) c^k_{{m-i-k+1}}b^{k+d}_{i-2}.$$
 Thus if $d=n-k -1$, then  $$d^k_{i-2}=d^{n-1}_{i-2}+\sum_{l=k}^{n-2} \prod_{j\geqslant k, j\neq l}^{n-2}(a^{j}_{m-i-j+1}-a^{j+1}_{m-i-j}) c^k_{{m-i-k+1}}b^{n-1}_{i-2}$$
\begin{center}
 $\left.\begin{array}{ccccc|ccccc}
 &&A_1&&&&A_2&&&\\\hline
Q_i^m&Q_i^{m-1}&\ldots&Q_i^{m-k+2}&Q_i^{m-k+1}&Q_{i-1}^{m-k}&Q_{i-1}^{m-k-1}&\ldots&Q_{i-1}^{m-n+2}&Q_{i-1}^{m-n+1}\\\hline
  a^0_{m-i}&a^1_{m-i-1} &\ldots&a^{k-2}_{m-i-k+2} & b^{k-1}_{i-1}& a^k_{m-i-k+1} & a^{k+1}_{m-i-k}&\ldots &a^{n-2}_{m-n-i+3} &a_1^{n-1} \\c^0_{m-i} & & && d^{k-1}_{i-1}&&&&&\ldots \\&&&&&& &  & &a_{m-n-i+2}^{n-1}\\&&&&&&& & &  d_1^{n-1}\\&  &  &  &  &  &  & && \ldots \\&  &  &   &  &  &  & & &d_{i-2}^{n-1}\\
  &&&&(c^{k-1})&(b^{k})&&&&b_{i-2}^{n-1}\\\hline\end{array}\right.$
  \end{center}
\end{proof} 
Let $A_1B_2$ be $(Q_i^m, Q_i^{m-1},\ldots,Q_i^{m-k+1},Q_{i-1}^{m-k},\ldots,Q_{i-n+k}^{m-n+1})$ with $|A_1|=k and |A_1|+|B_2|=n$ 
\begin{lemma}
$\text{Hilb}_{m_{\bullet}}(\tilde{R}/B)$ at the points $A_1B_2$ has a hypersurface singularity in $\mathbb{A}^{m+1}$ with a equation
\begin{align}
&\prod^{n-k-1}_{j=0}(d^{k+j-1}_{i-j-1}-d^{k+j}_{i-j-2})c^{n-1}_{m-i-k+1}\notag\\&=\prod^{k-2}_{j=0}(a^{j}_{m-i-j}-a^{j+1}_{m-i-j-1})c^0_{m-i}.\notag 
\end{align}
and it is a local complete intersection where $a^{k-1}_{m-i-k+1}$ is as below.
\end{lemma}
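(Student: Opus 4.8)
The plan is to repeat the explicit relative‑deformation computation used above for the pure A‑block and for $A_1A_2$, now for one A‑block of length $k\ge 2$ followed by one B‑block. Fix the point $A_1B_2=(Q_i^m,\ldots,Q_i^{m-k+1},Q_{i-1}^{m-k},\ldots,Q_{i-n+k}^{m-n+1})$ and let $I_s^0\subset I_s^1\subset\cdots\subset I_s^{n-1}$ be a general flat deformation of this chain relative to $B$ over an Artin local $\mathbb{C}$‑algebra $S$, with $I_s^j=(xy-s,f^j,g^j)$. For $0\le j\le k-1$ the ideal $I_s^j$ deforms $Q_i^{m-j}$, so $f^j$ has leading monomial $x^{m-i-j+1}$ and $g^j$ has leading monomial $y^i$; for $k\le j\le n-1$ it deforms $Q^{m-j}_{i-(j-k+1)}$, so the $x$‑degree of $f^j$ is the constant $m-i-k+2$ while the $y$‑degree of $g^j$ drops by one at each step. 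Applying the flatness relations \eqref{eq2} to each $I_s^j$ separately produces the crude parameter list of each layer together with the relations $b^j_\bullet c^j_\bullet=s$.

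Next I impose the nested conditions $I_s^{j-1}\subset I_s^j$ one inclusion at a time, from the top down. Each inclusion inside the A‑block, $1\le j\le k-1$, is of the ``$x$‑degree drops, $y$‑degree fixed'' type, so the coefficient identities \eqref{eq3} apply and let me solve away all but one parameter of $I_s^{j-1}$, multiplying the surviving leading $c$‑coefficient by a factor $(a^{j-1}_{m-i-j+1}-a^{j}_{m-i-j})$ and relating the top $b$'s by the same factor. The inclusion $I_s^{k-1}\subset I_s^k$ at the junction, together with every inclusion inside the B‑block, is of the ``$x$‑degree fixed, $y$‑degree drops'' type, so the identities \eqref{eq4} apply; each of these introduces a factor $(d^{k+j-1}_{i-j-1}-d^{k+j}_{i-j-2})$ into the relevant leading $c$‑coefficient, uses the relation $f^{\ell}=f^{\ell+1}+b^{\ell}_{i-1}g^{\ell+1}$ to express the otherwise surviving coefficient $a^{k-1}_{m-i-k+1}$ through the B‑block parameters (this is the ``as below'' substitution), and uses a $b$‑identity to eliminate one further parameter. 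Equating the two expressions thereby obtained for the leading $x$‑coefficient $c^{k-1}_{m-i-k+1}$ of $g^{k-1}$ — one by telescoping the A‑block identities upward from $c^0_{m-i}$, the other by telescoping the B‑block identities downward to $c^{n-1}_{m-i-k+1}$ — yields the single relation
\[
\prod_{j=0}^{n-k-1}\bigl(d^{k+j-1}_{i-j-1}-d^{k+j}_{i-j-2}\bigr)\,c^{n-1}_{m-i-k+1}=\prod_{j=0}^{k-2}\bigl(a^{j}_{m-i-j}-a^{j+1}_{m-i-j-1}\bigr)\,c^{0}_{m-i},
\]
while the ``$f$‑part'' and ``$b$‑part'' of the same identities are exactly what reduces the parameter list to the $m+1$ coordinates named in the statement, with $s$ recovered from one of the relations $b^\bullet c^\bullet=s$. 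So, formally at $A_1B_2$, $\text{Hilb}_{m_\bullet}(\tilde{R}/B)$ is the subscheme of $\mathbb{A}^{m+1}$ cut out by this one equation.

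Finally, since the displayed polynomial is plainly nonzero and the polynomial ring is a domain, it is a nonzerodivisor; hence the subscheme is a hypersurface and in particular a local complete intersection. I expect the only genuine difficulty to be organizational rather than conceptual: one must keep the running index families $(a^j_\bullet,b^j_\bullet,c^j_\bullet,d^j_\bullet)$ aligned through $n-1$ successive eliminations, check that the hypotheses of \eqref{eq3} or \eqref{eq4} are actually met at each step, and in particular produce the closed form of $a^{k-1}_{m-i-k+1}$ — the analogue of the telescoped $d^{k}_{i-2}$ appearing in the $A_1A_2$ case — so that the final relation truly involves only the listed parameters. I would obtain that closed form by an auxiliary induction on $|B_2|$, iterating $f^{\ell}=f^{\ell+1}+b^{\ell}_{i-1}g^{\ell+1}$ together with \eqref{eq4} down the B‑block.
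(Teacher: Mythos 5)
Your proposal is correct and follows essentially the same route as the paper: eliminate parameters inclusion by inclusion, using the \eqref{eq3}-type identities inside the A-block and the \eqref{eq4}-type identities at the junction and through the B-block, then equate the two telescoped expressions for $c^{k-1}_{m-i-k+1}$ (one propagated up from $c^0_{m-i}$, one down from $c^{n-1}_{m-i-k+1}$) to obtain the single hypersurface equation. The closed form of $a^{k-1}_{m-i-k+1}$ you propose to derive by iterating \eqref{eq4} down the B-block is exactly the formula the paper records.
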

\begin{proof}There are regular parameters 
\begin{center}
$a^{0}_{m-i},c^{0}_{m-i}, a^{1}_{m-i-1},a^2_{m-i-2}\ldots, a^{k-2}_{m-i-k+2}, d^{k-1}_{i-1},b^{k-1}_{i-1},$\\
$d^k_{i-2},d^{k+1}_{i-3},\ldots,d^{n-2}_{i-n+k}, a^{n-1}_1, a^{n-1}_2,\ldots, a^{n-1}_{m-i-k+1}, d^{n-1}_1,d^{n-1}_2,\ldots,d^{n-1}_{i-n+k-1},c^{n-1}_{m-i-k+1}$\\
\end{center}
with the equation [ABC]
 $$(c^{k-1}_{m-i-k+1}=)\prod^{n-1}_{j=k}(d^{j-1}_{i-j+k-1}-d^{j}_{i-j+k-2})c^{n-1}_{m-i-k+1}=\prod^{k-2}_{j=0}(a^{j}_{m-i-j}-a^{j+1}_{m-i-j-1})c^0_{m-i}$$
 where $a^{k-1}_{m-i-k+1}$ is as below. 
Use \eqref{eq4} repeatedly,
 $$\mathbf{a^{k-1}_{m-i-k+1}=}$$ 
$$=a^{k+d}_{m-i-k+1}+ \sum_{l=k}^{k+d} \prod_{j\geqslant k, j\neq l}^{k+d}(d^{j-1}_{i-j+k-1}-d^{j}_{i-j+k-2})b^{k-1}_{i-1}c^{k+d}_{m-i-k+1}.$$
If $d=n-k-1$, then
$$a^{k-1}_{m-i-k+1}=a^{n-1}_{m-i-k+1}+ \sum_{l=k}^{n-1} \prod_{j\geqslant k, j\neq l}^{n-1}(d^{j-1}_{i-j+k-1}-d^{j}_{i-j+k-2})b^{k-1}_{i-1}c^{n-1}_{m-i-k+1}$$
\begin{center}
 
$\left.\begin{array}{ccccc|ccccc}
 &&A_1&&&&B_2&&&\\\hline
Q_i^m&Q_i^{m-1}&\ldots&Q_i^{m-k+2}&Q_i^{m-k+1}&Q_{i-1}^{m-k}&Q_{i-2}^{m-k-1}&\ldots&Q_{i-1}^{m-n+2}&Q_{i-n+k}^{m-n+1}\\\hline
  a^0_{m-i}&a^1_{m-i-1} &\ldots&a^{k-2}_{m-i-k+2} & b^{k-1}_{i-1}& d^k_{i-2} & d^{k+1}_{i-3}&\ldots &d^{n-2}_{i-n+k} &a_1^{n-1} \\c^0_{m-i} & & && d^{k-1}_{i-1}&&&&&\ldots \\&&&&&& &  & &a_{m-i-k+1}^{n-1}\\&&&&&&& & &  d_1^{n-1}\\&  &  &  &  &  &  & && \ldots \\&  &  &   &  &  &  & & &d_{i-n+k-1}^{n-1}\\&&&&(c^{k-1})&&&&&b_{i-n+k-1}^{n-1}\\\hline\end{array}\right.$
 \end{center}
   \end{proof}
Let  $B_1A_2$ be $(Q^m_i,Q^{m-1}_{i-1},Q^{m-2}_{i-2},\ldots,Q^{m-k}_{i-k},Q^{m-k-1}_{i-k},\ldots,Q_{i-k}^{m-n-1} )$ with $|B_1|=k$ and $|B_1|+|A_2|=n.$
\begin{lemma}
$\text{Hilb}_{m_{\bullet}}(\tilde{R}/B)$ at the points $B_1A_2$ has a hypersurface singularity.
\end{lemma}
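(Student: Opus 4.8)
The plan is to carry out at the point $B_1A_2$ exactly the kind of coefficient bookkeeping done above for the points $A_1$, $A_1A_2$ and $A_1B_2$. Write the chain as
$$B_1A_2=\bigl(Q^m_i,\;Q^{m-1}_{i-1},\;\ldots,\;Q^{m-k+1}_{i-k+1},\;Q^{m-k}_{i-k},\;Q^{m-k-1}_{i-k},\;\ldots,\;Q^{m-n+1}_{i-k}\bigr),$$
so the first $k$ ideals $I^0,\ldots,I^{k-1}$ form the decreasing block $B_1$ and the last $n-k$ ideals $I^{k},\ldots,I^{n-1}$ form the constant block $A_2$. In a nested chain $(I^0_s<I^1_s<\cdots<I^{n-1}_s)$ flatly deforming $B_1A_2$ relative to $B$, with each $I^l_s=(xy-s,f^l,g^l)$ in standard normal form and all coefficients in $m_s$, the $k$ steps $I^l_s<I^{l+1}_s$ for $0\le l\le k-1$ are of the ``lower index drops'' type governed by \eqref{eq4}, while the $n-1-k$ steps for $k\le l\le n-2$ are of the ``lower index constant'' type governed by \eqref{eq3}; the junction ideal $I^k$ is the target of the last \eqref{eq4}-step and the source of the first \eqref{eq3}-step.

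First I would use the flatness relations \eqref{eq2} for each $I^l_s$ to list all the parameters, and then impose the nesting conditions one step at a time. Writing $b^l_{(\mathrm{top})}$, $c^l_{(\mathrm{top})}$ for the top $b$- and $c$-coefficients of $f^l$ and $g^l$: each \eqref{eq4}-step $I^l_s<I^{l+1}_s$ yields (i) a relation $b^{l+1}_{(\mathrm{top})}=\bigl(d^l_{(\mathrm{top})}-d^{l+1}_{(\mathrm{top})}\bigr)b^l_{(\mathrm{top})}$ eliminating $b^{l+1}_{(\mathrm{top})}$; (ii) a relation $c^l_{(\mathrm{top})}=\bigl(d^l_{(\mathrm{top})}-d^{l+1}_{(\mathrm{top})}\bigr)c^{l+1}_{(\mathrm{top})}$ eliminating $c^l_{(\mathrm{top})}$; and (iii) an $s$-relation $\bigl(d^l_{(\mathrm{top})}-d^{l+1}_{(\mathrm{top})}\bigr)b^l_{(\mathrm{top})}c^{l+1}_{(\mathrm{top})}=s$ which, as in the earlier proofs, becomes a consequence of the others. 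Dually, each \eqref{eq3}-step inside $A_2$ eliminates the top $b$-coefficient of its source ideal and the top $c$-coefficient of its target ideal, contributing a factor $\bigl(a^l_{(\mathrm{top})}-a^{l+1}_{(\mathrm{top})}\bigr)$. Iterating \eqref{eq3} through $A_2$ (resp. \eqref{eq4} through $B_1$) also produces the closed substitution formulas for the top $d$-coefficient of $I^k$ and the top $a$-coefficient of $I^{k-1}$ in terms of the coefficients of $I^{n-1}$ together with the surviving $b$'s and $c$'s, of precisely the type displayed in the $A_1A_2$ and $A_1B_2$ lemmas.

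The effect of these eliminations is that all but one of the relations are spent solving for a parameter, and the single surviving constraint is the compatibility at the junction coefficient $b^k_{(\mathrm{top})}$: the chain of \eqref{eq4}-relations over $B_1$ computes it as $\bigl(\prod_{l=0}^{k-1}(d^l_{(\mathrm{top})}-d^{l+1}_{(\mathrm{top})})\bigr)b^0_{(\mathrm{top})}$, while the chain of \eqref{eq3}-relations over $A_2$ computes it as $\bigl(\prod_{l=k}^{n-2}(a^l_{(\mathrm{top})}-a^{l+1}_{(\mathrm{top})})\bigr)b^{n-1}_{(\mathrm{top})}$; equating these gives a single equation of the shape
$$\Bigl(\prod_{l=0}^{k-1}\bigl(d^l_{(\mathrm{top})}-d^{l+1}_{(\mathrm{top})}\bigr)\Bigr)b^0_{(\mathrm{top})}=\Bigl(\prod_{l=k}^{n-2}\bigl(a^l_{(\mathrm{top})}-a^{l+1}_{(\mathrm{top})}\bigr)\Bigr)b^{n-1}_{(\mathrm{top})},$$
where the top $d$-coefficient of $I^k$ is read as its substituted value; for $n=4$, $k=1$ this recovers case $(4)$ of the lemma for $m_{\bullet}=(m,m-1,m-2,m-3)$. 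Only one equation survives here, in contrast with the $A_1A_2$ case which is a codimension two complete intersection, because the two blocks meet along an $A$-block lying entirely to the right of the $B$-block, so the top $c$-coefficient $c^k_{(\mathrm{top})}$ at the junction is eliminated by no step and stays a free parameter, leaving only the $b$-side compatibility; in $A_1A_2$ the single \eqref{eq4}-step sits between two $A$-blocks and both its $b$- and its $c$-relation close up. A dimension count identical to that in the previous lemmas shows the remaining parameters form a regular system of parameters, so the displayed relation exhibits $\text{Hilb}_{m_{\bullet}}(\tilde R/B)$ at $B_1A_2$ as a hypersurface in a smooth germ; in particular it has a hypersurface singularity there, as claimed. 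The one genuinely laborious step, which I expect to be the main obstacle, is the bookkeeping: tracking the substituted top $d$- and $a$-coefficients through the iterated use of \eqref{eq3} and \eqref{eq4}, and checking that among all the relations produced exactly one is independent while the rest follow; this is best organized with a table of the form used in the $A_1A_2$ and $A_1B_2$ lemmas.
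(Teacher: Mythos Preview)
Your strategy is correct and follows essentially the same approach as the paper: list the flatness parameters, eliminate step by step using the relations \eqref{eq3} along $A_2$ and \eqref{eq4} along $B_1$, and arrive at the single surviving ``$b$-compatibility'' equation at the junction, which is exactly the paper's relation [BAB]
\[
\prod_{j=0}^{k-1}\bigl(d^{j}_{i-j}-d^{j+1}_{i-j-1}\bigr)\,b^{0}_{i-1}
=\prod_{j=k}^{n-2}\bigl(a^{j}_{m-i-j+k}-a^{j+1}_{m-i-j+k-1}\bigr)\,b^{n-1}_{i-k-1},
\]
with the substituted value of $d^{k}_{i-k-1}$. Your explanation of \emph{why} only one equation survives (the top $c$-coefficient $c^{k}_{m-i}$ at the junction is eliminated by neither block and remains free) is a helpful gloss that the paper leaves implicit.
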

\begin{proof}
There are regular parameters
$$b^0_{i-1},d^0_{i-1}, d^1_{i-2},,,,d^{k-1}_{i-k},\\ a^{k}_{m-i},c^{k}_{m-i}, a^{k+1}_{m-i-1},\ldots, a^{n-2}_{m-i-n+k+2},$$ $$a^{n-1}_1,\ldots, a ^{n-1}_{m-i-n+k+1},d^{n-1}_{1},\ldots, d^{n-1}_{i-k-1},b^{n-1}_{i-k-1}$$
with only one equation [BAB]
$$(b^{k}_{i-k-1}=) \prod^{k-1}_{j=0} (d_{i-j}^{j}-d_{i-j-1}^{j+1})b_{i-1}^0$$
$$=\prod_{j=k_1}^{n-2} (a^{j}_{m-i-j+k}-a^{j+1}_{m-i-j+k-1})b^{n-1}_{i-k-1}$$
where $d^{k}_{i-k-1}$ is as below(same as one in Lemma 9.) 
$$d^{k}_{i-k-1}=d^{n-1}_{i-k-1}+\sum_{l=k}^{n-2} \prod_{j\geqslant k, j\neq l}^{n-2}(a^{j}_{m-i-j+1}-a^{j+1}_{m-i-j}) c^{k}_{{m-i}}b^{n-1}_{i-k-1}$$
\begin{center}
$\left.\begin{array}{ccccc|ccccc}
 &&B_1&&&&A_2&&&\\\hline
Q_i^m&Q_{i-1}^{m-1}&\ldots&Q_{i-k+2}^{m-k+2}&Q_{i-k+1}^{m-k+1}&Q_{i-k}^{m-k}&Q_{i-k}^{m-k-1}&\ldots&Q_{i-k}^{m-n+2}&Q_{i-k}^{m-n+1}\\\hline
  b^0_{i-1}&d^1_{i-2} &\ldots&d^{k-2}_{i-k+1} & d^{k-1}_{i-k}& a^k_{m-i} & a^{k+1}_{m-i-1}&\ldots &a^{n-2}_{m-i-n+k+2}&a_1^{n-1} \\d^0_{i-1} & & &&& c^{k-1}_{m-i}&&&&\ldots \\&&&&&& &  & &a_{m-n-i+k+1}^{n-1}\\&&&&&&& & &  d_1^{n-1}\\&  &  &  &  &  &  & && \ldots \\&  &  &   &  &  &  & & &d_{i-k-1}^{n-1}\\
  &&&&&(b^{k})&&&&b_{i-k-1}^{n-1}\\\hline\end{array}\right.$
 \end{center}
\end{proof}
\begin{lemma} 
The relative \text{Hilb}ert scheme at the points
$A_1A_2\ldots A_h$ where $k_j-k_{j-1}\geqslant 2.$  $|A_1|=k_1, |A_1|+|A_2|=k_2,\ldots,\sum^{h}_{s=1}|A_s|=k_h$ is locally complete intersection embedded in $\mathbb{A}^{2(h-1)+m+1}$ with $2(h-1)$ (independent) equations.
\end{lemma}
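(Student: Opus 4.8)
The plan is to run the same explicit deformation computation used for the one‑block point $A_1$ and the two‑block point $A_1A_2$, now for a chain with $h-1$ transitions, organising the induction by the number $h$ of blocks (with base cases $h=1$ and $h=2$, the two preceding lemmas). At $A_1\cdots A_h\in \text{Hilb}^0_{m_\bullet}(R)$ I would take a flat relative deformation $I^0_s<I^1_s<\cdots<I^{n-1}_s$ over an Artin local $\mathbb{C}$‑algebra $S$ of the corresponding chain of ideals $\{Q^k_j\}$, write each $I^k_s$ in the normal form $(xy-s,\,f^k,\,g^k)$, list the free parameters that the flatness relations \eqref{eq2} attach to each $I^k_s$, and sweep the nesting relations \eqref{eq3}, \eqref{eq4} up the chain from $I^0_s$ to $I^{n-1}_s$. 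Since $k_j-k_{j-1}\geqslant 2$ for every $j$, each block has length at least two and all $h-1$ transitions are between two $A$‑blocks, so the sweep only ever sees block interiors and $A$‑to‑$A$ transitions.

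Two points have to be verified during the sweep, both already present in the $A_1A_2$ case. First, \emph{inside} a block $A_t$, where the lower index is constant, each nesting step $I^k_s<I^{k+1}_s$ only expresses some $b$‑ and $c$‑parameters of $I^k_s$ through parameters of $I^{k+1}_s$, and the relation $(a^k_{\bullet}-a^{k+1}_{\bullet})\,b^{k+1}_{i-1}c^k_{\bullet}=s$ accompanying it is a formal consequence of the one produced at the previous step; thus a block interior contributes only eliminations, no new generator of the ideal of $\text{Hilb}_{m_\bullet}(\tilde R/B)$ — this is exactly the mechanism of the one‑block lemma, and I would check it is not spoiled by transitions lying on either side. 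Second, \emph{at} each of the $h-1$ transitions, where the lower index drops by one on passing from $A_t$ to $A_{t+1}$, the step behaves as in $A_1A_2$: it produces exactly one relation of type [AAB] (equating $b$‑coefficients) and one of type [AAC] (equating $c$‑coefficients), while the two parameters $a^{k_t}_{m-i-k_t+1}$ and $c^{k_t}_{m-i-k_t+1}$ that would be killed inside a block now survive. The right‑hand sides of these relations are products $\prod(a^j_{\bullet}-a^{j+1}_{\bullet})$ over the indices lying to the right of (for [AAB]) resp.\ to the left of (for [AAC]) the transition, times a surviving $b$‑ resp.\ $c$‑parameter, the ``propagated'' quantities $d^{k_t}_{\bullet}$ and $a^{k_t}_{\bullet}$ in them being computed by iterating \eqref{eq3}, \eqref{eq4} exactly as recorded for $A_1A_2$ and $A_1B_2$. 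Tallying the survivors then exhibits $\text{Hilb}_{m_\bullet}(\tilde R/B)$ at $A_1\cdots A_h$ as the subscheme of $\mathbb{A}^{2(h-1)+m+1}$ cut out by these $2(h-1)$ equations, $m+1$ of the coordinates being the ``core'' parameters of the one‑block case and the remaining $2(h-1)$ being the two transition parameters contributed by each of the $h-1$ transitions.

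It then remains to see that these $2(h-1)$ equations form a regular sequence, which I would deduce from a dimension count rather than directly. Let $P$ be the regular local ring of $\mathbb{A}^{2(h-1)+m+1}$ at the origin and $J$ the ideal of the $2(h-1)$ equations; since $P$ is Cohen--Macaulay it suffices to show $\dim P/J=m+1=\dim P-2(h-1)$. The inequality $\geqslant$ is Krull's height theorem. For $\leqslant$: as for every flag Hilbert scheme appearing here (and as recorded in the preceding lemmas), $\text{Hilb}_{m_\bullet}(\tilde R/B)$ has dimension $m+1$; concretely, $fHilb_m(\tilde R/B)$ is flat over $B$ with generic fibre of dimension $m$, hence of dimension $m+1$, and the forgetful morphism $fHilb_m(\tilde R/B)\to \text{Hilb}_{m_\bullet}(\tilde R/B)$ is surjective (a length‑$n$ chain extends to a full flag, at each insertion using the surjectivity of $\text{Hilb}^0_{\ell,\ell-1}(R)\to \text{Hilb}^0_{\ell}(R)$ recalled above), whence $\dim\text{Hilb}_{m_\bullet}(\tilde R/B)\leqslant m+1$. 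Therefore $\dim P/J=m+1$, the generators of $J$ form a regular sequence in $P$, and $\text{Hilb}_{m_\bullet}(\tilde R/B)$ is locally complete intersection of dimension $m+1$ at $A_1\cdots A_h$.

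The hard part will be the second paragraph: keeping all the upper and lower indices straight through $h$ consecutive blocks, verifying rigorously that the block interiors contribute only eliminations and no relations even when flanked by transitions, and pinning down the closed form of the two transition equations — in particular the products of differences $a^j_{\bullet}-a^{j+1}_{\bullet}$ and the nested $c\,b$‑corrections hidden inside the propagated $a$'s and $d$'s. Once the parametrisation is fixed the complete‑intersection property is automatic from the dimension equality above, so the independence of the $2(h-1)$ equations never has to be checked by hand; a fully self‑contained alternative would be to triangularise the $2(h-1)$ equations with respect to a suitable weighting of the transition variables $a^{k_t}_{\bullet},c^{k_t}_{\bullet}$, or to realise the forgetful map $\text{Hilb}_{m_\bullet}(\tilde R/B)\to \text{Hilb}_{m'_\bullet}(\tilde R/B)$ dropping the last block as a flat local complete intersection morphism and to add one block — hence one transition's two equations and two parameters — at a time.
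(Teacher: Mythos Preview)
Your parametrisation strategy --- sweep the nesting relations \eqref{eq3} through the chain, record that block interiors only eliminate variables while each $A$-to-$A$ transition leaves two surviving parameters and produces one [AAB] and one [AAC] equation, with the propagated $d^{k_j}_{\bullet}$ and $a^{k_j-1}_{\bullet}$ computed by iterating \eqref{eq3} --- is exactly what the paper does. Its proof is just the bookkeeping you outline: it writes out the surviving parameter list and the $2(h-1)$ equations for $h=2$, then $h=3$, then general $h$, and stops there.

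On the complete-intersection step you are more careful than the paper, which simply asserts the equations form a regular sequence without argument. But your dimension bound as written is circular: you invoke that $fHilb_m(\tilde R/B)$ is flat over $B$ to get $\dim fHilb_m(\tilde R/B)=m+1$, yet that flatness is the content of Theorem~7, which this lemma (and its generalisation Theorem~14) is meant to feed into. The repair is the alternative you already list at the end: realise $\text{Hilb}_{m_\bullet}(\tilde R/B)$ as the iterated fibre product of the maps $\text{Hilb}_{k,k-1}(\tilde R/B)\to\text{Hilb}_{k-1}(\tilde R/B)$, each flat and locally complete intersection of relative dimension~$1$ by Theorem~5(iii), over the smooth base $\text{Hilb}_{m-n+1}(\tilde R/B)$ of Theorem~4. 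This yields $\dim\text{Hilb}_{m_\bullet}(\tilde R/B)\le m+1$ non-circularly --- and in fact gives the LCI property directly, so the explicit $2(h-1)$ equations become a confirmation of the local structure rather than the proof of LCI.
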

\begin{proof}
By lemma 2, let us see up to $A_1A_2$.

$$a_{m-i}^0,c_{m-i}^0,$$
$$a_{m-i-1}^1,a_{m-i-2}^{2},\ldots,a_{m-i-k_1+2}^{k_1-2},$$
$$d_{i-1}^{k_1-1},b_{i-1}^{k_1-1}$$
$$a_{m-i-k_1+1}^{k_1},c_{m-i-k_1+1}^{k_1}$$
$$a_{m-i-k_1}^{k_1+1},\ldots, a_{m-i-k_2+3}^{k_2-2}$$
$$a_1^{k_2-1},\ldots, a_{m-k_2-i+2}^{k_2-1},d_1^{k_2-1},\ldots,d_{i-2}^{k_2-1},b_{i-2}^{k_2-1}$$

 with the equations
$(b_{i-2}^ {k_1}=)$ $$\{ d_{i-1}^{ {k_1}-1}-(d_{i-2}^{ {k_2}-1}+\sum_{l=k_1}^{k_2-2}\prod^{ {k_2}-2}_{j\geqslant {k_1}, j\neq l}(a_{m-i-j+1}^j-a_{m-i-j}^{j+1})b_{i-2}^{{k_2}-1}c_{m-i- {k_1}+1}^ {k_1})\}b^{ {k_1}-1}_{i-1}$$ $$=\prod^{ {k_2}-2}_{j= {k_1}}(a_{m-i-j+1}^j-a_{m-i-j}^{j+1})b^{ {k_2}-1}_{i-2}$$ 
and 
$(c^{ {k_1}-1}_{m- {k_1}-i+1}=)$ $$ \{d_{i-1}^{k_1-1}-(d_{i-2}^{ {k_2}-1}+\sum^{k_2-2}_{l=k_1}\prod^{ {k_2}-2}_{j\geqslant{k_1}, j\neq l}(a_{m-i-j+1}^j-a_{m-i-j}^{j+1})b_{i-2}^{ {k_2}-1}c_{m-i- {k_1}+1}^ {k_1})\}c_{m-i- {k_1}+1}^ {k_1}$$
$$=\prod^{ {k_1}-3}_{j=0}(a_{m-i-j}^j-a_{m-i-j-1}^{j+1})(a_{m-i- {k_1}+2}^{ {k_1}-2}-(a_{m-i- {k_1}+1}^{ {k_1}}+b_{i-1}^{ {k_1}-1}c_{m-i- {k_1}+1}^ {k_1}))c_{m-i}^{k_1}.$$

Up to $A_1A_2A_3$, by lemma 2 for $A_2A_3$, we can get
$$a_{m-i}^0,c_{m-i}^0,$$
$$a_{m-i-1}^1,a_{m-i-2}^{2},\ldots,a_{m-i-k_1+2}^{k_1-2},$$
$$d_{i-1}^{k_1-1},b_{i-1}^{k_1-1}$$
$$a_{m-i-k_1+1}^{k_1},c_{m-i-k_1+1}^{k_1}$$
$$a_{m-i-k_1}^{k_1+1},\ldots, a_{m-i-k_2+3}^{k_2-2}$$
$$d_{i-2}^{k_2-1},b_{i-2}^{k_2-1}$$
$$a_{m-i-k_2+2}^{k_2}, c_{m-i-k_2+2}^{k_2}$$
$$a_{m-i-k_2+1}^{k_2+1},\ldots, a_{m-i-k_3+4}^{k_3-2},$$
$$a_1^{k_3-1},\ldots, a_{m-k_3-i+3}^{k_3-1},d_1^{k_3-1},\ldots,d_{i-2}^{k_3-1},b_{i-2}^{k_3-1}$$

with two more equations
$$(b_{i-2}^ {k_2}=)(d_{i-1}^{ {k_2}-1}-d^{k_2}_{i-2})b^{ {k_2}-1}_{i-1}=\prod^{ {k_3}-2}_{j= {k_2}}(a_{m-i-j+1}^j-a_{m-i-j}^{j+1})b^{ {k_3}-1}_{i-2}$$
and 
$$(c^{ {k_2}-1}_{m- {k_2}-i+1}=)(d_{i-1}^{ {k_2}-1}-d^{k_2}_{i-2})c_{m-i- {k_2}+1}^ {k_2}$$
$$=\prod^{ {k_2}-3}_{j=k_1}(a_{m-i-j}^j-a_{m-i-j-1}^{j+1})(a_{m-i- {k_2}+2}^{ {k_2}-2}-a^{k_2-1}_{m-i- {k_2}+1})c_{m-i-k_1+1}^{k_1}.$$
 where $d^{k_2}_{i-2}$ and $a^{k_2-1}_{m-i- {k_2}+1}$ as below. 
 $$d^{k_2}_{i-2}=d_{i-2}^{ {k_3}-1}+\sum^{k_3-2}_{l=k_2}\prod^{ {k_3}-2}_{j\geqslant {k_2}, j\neq l}(a_{m-i-j+1}^j-a_{m-i-j}^{j+1})b_{i-2}^{{k_2}-1}c_{m-i-k_2+2}^{k_2}$$
$$a^{k_2-1}_{m-i- {k_2}+1}=a_{m-i- {k_2}+1}^{ {k_2}}+b_{i-1}^{ {k_2}-1}c_{m-i- {k_2}+1}^ {k_2}$$
Similarly, for $A_1A_2\ldots A_h $ we have parameters as below

$$a_{m-i}^0,c_{m-i}^0,$$
$$a_{m-i-1}^1,a_{m-i-2}^{2},\ldots,a_{m-i-k_1+2}^{k_1-2},$$
$$d_{i-1}^{k_1-1},b_{i-1}^{k_1-1}$$
$$a_{m-i-k_1+1}^{k_1},c_{m-i-k_1+1}^{k_1}$$
$$a_{m-i-k_1}^{k_1+1},\ldots, a_{m-i-k_2+3}^{k_2-2}$$
$$d_{i-2}^{k_2-1},b_{i-2}^{k_2-1}$$
$$a_{m-i-k_2+2}^{k_2}, c_{m-i-k_2+2}^{k_2}$$
$$a_{m-i-k_2+1}^{k_2+1},\ldots, a_{m-i-k_3+4}^{k_3-2},$$
$$d_{i-3}^{k_3-1}, b_{i-3}^{k_3-1}$$
$$a_{m-i-k_3+3}^{k_3},c_{m-i-k_3+3}^{k_3}$$
$$a_{m-i-k_3+2}^{k_3+1},\ldots, a_{m-i-k_4+5}^{k_4-2}$$
$$\ldots$$
$$\ldots$$
$$\ldots$$
$$\ldots$$
$$d_{i-h+1}^{k_{h-1}-1},b_{i-h+1}^{k_{h-1}-1}$$
$$a_{m-i-k_{h-1}+h-1}^{k_{h-1}},c_{m-i-k_{h-1}+h-1}^{k_{h-1}}$$
$$a_{m-i-k_{h-1}+h-2}^{k_{h-1}+1},\ldots, a_{m-i-k_h+h+1}^{k_{h}-2}$$
$$a_1^{k_{h}-1},\ldots, a_{m-i-k_h+h}^{k_{h}-1}, d_1^{k_{h}-1}\ldots,d_{i-h}^{k_{h}-1}, b_{i-h}^{k_{h}-1}$$
with the set of equations [[AAB]]  
$$(b_{i-j-1}^{k_j}=)$$
\begin{align}
&(d_{i-j}^{k_j-1}-d_{i-j-1}^{k_j})b_{i-j}^{k_j-1}\notag\\
&=\prod^{k_{j+1}-3}_{e=k_j}(a^{e}_{m-i+j-e}-a^{e+1}_{m-i+j-e-1})(a^{k_{j+1}-2}_{m-i-k_{j+1}+j+2}-a^{k_{j+1}-1}_{m-i-k_{j+1}+j+1})b_{i-j-1}^{k_{j+1}-1}\notag\\\;
\end{align}
$ for\; j=1,..,h-1.$

and the set of equations [[AAC]]
$$(c^{k_{j}-1}_{m-i-k_{j}+j}=)$$
 \begin{align}
&\prod_{e=k_{j-1}}^{k_{j}-3}(a^{e}_{m-i+j-e-1}-a^{e+1}_{m-i+j-e-2})(a^{k_{j}-2}_{m-i-k_{j}+j+1}-a^{k_{j}-1}_{m-i-k_{j}+j})c_{m-i-k_{j-1}+j-1}^{k_{j-1}}\notag\\&=(d_{i-j}^{k_j-1}-d_{i-j-1}^{k_j})c_{m-i-k_{j}+{j}}^{k_{j}}\notag\; 
\end{align}
$for\; j=1,\ldots,h-1.$ 
where \small \; \;$a^{k_{j}-1}_{m-i-k_j+j}$\; and $d^{k_j}_{i-j-1}$\; as\; below for\; j=1,\ldots,, h-1.
 $$a^{k_{j}-1}_{m-i-k_j+j}=a^{k_j}_{m-i-k_j+j}+b^{k_j-1}_{i-j}c^{k_j}_{m-i-k_j+j}$$
\; and 
\small$$ d^{k_j}_{i-j-1}= 
d^{k_{j+1}-1}_{i-j}+$$ $$\sum^{k_{j+1}-3}_{l=k_j}\prod^{k_{j+1}-3}_{e\geqslant k_j, e\neq l}(a^{e}_{m-i+j-e}-a^{e+1}_{m-i+j-e-1})(a^{k_{j+1}-2}_{m-i-k_{j+1}+j+2}-a^{k_{j+1}-1}_{m-i-k_{j+1}+j+1}))b^{k_{j+1}-1}_{i-j-1}c^{k_j}_{m-i-k_j+j}.$$ \end{proof}
  Let  $P$ be $A_1..A_{l_1-1}B_{l_1} A_{l_1+1}\ldots A_{l_2-1}B_{l_2}A_{l_2+1}\ldots A_{l_t-1}B_{l_t}A_{l_t+1}..A_{h}$ where
$|A_1|=k_1, |A_j|=k_j-k_{j-1}, \; for j=1,..,\widehat{l_1},\ldots,\widehat{l_t},k_0=0 \; and \;|B_j|=k_j-k_{j-1}$ for $l_1,\ldots,l_t.$
\begin{theorem}
The relative \text{Hilb}ert scheme at the point P is locally complete intersection embedded in $\mathbb{A}^{2h-2+m+1}$ with $2h-2$ equations.

\end{theorem}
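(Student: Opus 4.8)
\noindent\emph{Proof idea.} The plan is to run a downward induction on the number of blocks~$h$, extending the block‑by‑block elimination carried out in the lemma on the all‑$A$ pattern $A_1\cdots A_h$ to an arbitrary block pattern $P$, and then to verify that the surviving equations form a regular sequence. First I would set up the deformation exactly as in the earlier lemmas: at the point $P$ choose a chain $I_s^0\subset I_s^1\subset\cdots\subset I_s^{n-1}$ in $\tilde R\otimes_B S$ flatly deforming, relative to $B$, the chain of monomial ideals $Q^{k}_{j}$ prescribed by $P$, where each $I_s^k$ is generated by $xy-s$ together with $f^k,g^k$ in the standard normal form and with all coefficients in $\mathfrak m_S$. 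Flatness of each individual $I_s^k$ imposes the relations \eqref{eq2} (as in the proof that $\text{Hilb}_m(\tilde R/B)$ is smooth), cutting down to an explicit candidate parameter set together with a relation of the form $b\cdot c=s$; then the nesting $I_s^k\subset I_s^{k+1}$ imposes \eqref{eq3} when the lower index is unchanged across the $k$‑th link (an ``$A$‑type'' link) and \eqref{eq4} when the lower index drops by one (a ``$B$‑type'' link, as in the Remark and its proof treating $(Q^m_i,Q^{m-1}_{i-1})$).

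Next I would perform the elimination one block at a time. Inside an $A$‑block the $A$‑type links successively eliminate all the interior $a,b,c,d$ parameters of that block, leaving only the telescoped expressions that record the accumulated substitutions — these are precisely the displayed formulas for $d^{k_j}_{i-j-1}$ and for $a^{k_j-1}_{m-i-k_j+j}$ appearing in the lemma on $A_1\cdots A_h$. Inside a $B$‑block the $B$‑type links eliminate the analogous interior parameters, leaving telescoped expressions of the shape appearing in the $A_1B_2$ and $B_1A_2$ lemmas. At each of the $h-1$ junctions between consecutive blocks exactly one ``$c$''‑relation and one ``$b$''‑relation fail to be absorbed into the substitutions and survive as defining equations: the pair [AAB], [AAC] at a junction between two $A$‑blocks, and the analogues [ABC], [BAB] at junctions involving a $B$‑block. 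Tracking the indices, the remaining variables number $m+2h-1$, so the scheme is embedded in $\mathbb A^{2h-2+m+1}$ and cut out by $2h-2$ equations; this is exactly the content of the induction step, with the all‑$A$ lemma as the model and the two hypersurface lemmas supplying the behaviour at $B$‑blocks.

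Finally I would establish the local complete intersection property by showing these $2h-2$ equations form a regular sequence in the polynomial ring on the surviving variables. The mechanism is the triangular structure already exploited in part~(3) of the lemma for $m_\bullet=(m,m-1,m-2,m-3)$ and in the all‑$A$ lemma: order the junctions $1,\dots,h-1$, and in the two equations coming from junction $j$ single out a distinguished variable (for instance the $b^{k_{j+1}-1}_{i-j-1}$ or $c^{k_j}_{\,\cdot}$ occurring with a coefficient that is a product of differences of $a$'s) which does not appear in the equations of the junctions $<j$ after their substitutions are carried out; then modding out successively by these equations keeps the quotient a polynomial ring, so the sequence is regular. This gives the lci conclusion; since each equation drops the dimension by one, the local dimension is $m+1$, consistent with Ran's full‑flag theorem.

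The main obstacle is the bookkeeping. The substitutions dictated by \eqref{eq3} and by \eqref{eq4} are not symmetric, so one must check that the telescoped formulas for the eliminated parameters stay consistent when the pattern switches from an $A$‑block to a $B$‑block and back, and at the first and last blocks where one end is ``open''. The delicate point — rather than merely exhibiting $2h-2$ equations — is to certify that they genuinely form a regular sequence and not just a spanning set of the right cardinality; this requires producing, uniformly in the block pattern $P$, for each surviving equation a monomial whose leading variable is untouched by the other equations, which is the combinatorial heart of the argument.
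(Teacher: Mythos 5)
Your overall strategy is essentially the paper's own: the proof of this theorem is exactly the four-step bookkeeping you describe (first the pure $A$-stretch $A_1\cdots A_{l_1-1}$, then appending $B_{l_1}$, then the general pattern through $B_{l_t}$, then the trailing $A$-blocks), using \eqref{eq2} for flatness, \eqref{eq3} at links where the lower index is constant and \eqref{eq4} where it drops, with the telescoped substitutions for $d^{k_j}_{i-j-1}$ and $a^{k_j-1}_{m-i-k_j+j}$ and the surviving equations labelled [[AAB]], [[AAC]], [[ABC]], [[BAB]]. Your insistence on actually exhibiting a regular sequence is a point where your plan is more complete than the source, which carries that verification out only in part (3) of the lemma for $m_\bullet=(m,m-1,m-2,m-3)$ and otherwise just lists parameters and equations.

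There is, however, a concrete error in your accounting that would prevent the induction from closing. You assert that at every one of the $h-1$ junctions exactly one ``$b$''-relation and one ``$c$''-relation survive, yielding $2h-2$ equations in $m+2h-1$ variables. That is correct only for $A|A$ junctions. At a junction flanking a $B$-block only one relation survives: in the paper's lemma on $A_1B_2$ the ``$b$''-relation is absorbed as the substitution defining $a^{k-1}_{m-i-k+1}$ and only [ABC] remains (one equation, with the scheme of dimension $m+1$), and dually in the lemma on $B_1A_2$ the ``$c$''-relation is absorbed into $d^{k}_{i-k-1}$ and only [BAB] remains. Consequently a pattern $P$ containing $t\geqslant 1$ blocks of type $B$ is cut out by strictly fewer than $2h-2$ equations (each $B$-block removes one equation per adjacent junction, and correspondingly two ambient coordinates), so the codimension still matches the number of equations and the dimension is still $m+1$, but the figures $2h-2$ and $\mathbb{A}^{2h-2+m+1}$ are attained only in the pure-$A$ case $t=0$. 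This discrepancy is already present in the theorem's literal statement, but a proof that posits two surviving equations at every junction will contradict the $A_1B_2$ and $B_1A_2$ lemmas it must reduce to when $h=2$; the lci conclusion survives, the stated count does not.
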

\begin{proof}
\begin{step} Let us look at $A_1..A_{l_1-1}$
$$ a_{m-i}^0,c_{m-i}^0,$$
$$a_{m-i-1}^1,a_{m-i-2}^{2},\ldots,a_{m-i-k_1+2}^{k_1-2},$$
$$d_{i-1}^{k_1-1},b_{i-1}^{k_1-1}$$
$$a_{m-i-k_1+1}^{k_1},c_{m-i-k_1+1}^{k_1}$$
$$a_{m-i-k_1}^{k_1+1},\ldots, a_{m-i-k_2+3}^{k_2-2}$$
$$d_{i-2}^{k_2-1},b_{i-2}^{k_2-1}$$
$$a_{m-i-k_2+2}^{k_2}, c_{m-i-k_2+2}^{k_2}$$
$$a_{m-i-k_2+1}^{k_2+1},\ldots, a_{m-i-k_3+4}^{k_3-2},$$
$$d_{i-3}^{k_3-1}, b_{i-3}^{k_3-1}$$
$$\ldots$$
$$\ldots$$
$$d_{i-l_1+2}^{k_{ l_1-2}-1},b_{i-l_1+2}^{k_ { l_1-2}-1}$$
$$a_{m-i-k_{ l_1-2}+ l_1-2}^{k_{l_1-2}},c_{m-i-k_{ l_1-2}+ l_1-2}^{k_{ l_1-2}}$$
$$a_{m-i-k_{ l_1-2}+l_1-3}^{k_{ l_1-2}+1},\ldots, a_{m-i-k_{ l_1-1}+l_1}^{k_{ l_1-1}-2}$$
$$a_1^{k_{l_1-1}-1},\ldots, a_{m-i-k_{l_1-1}+l_1-1}^{k_{l_1-1}-1}, d_1^{k_{l_1-1}-1}\ldots,d_{i-l_1+1}^{k_{l_1-1}-1}, b_{i-l_1+1}^{k_{l_1-1}-1}$$
with the relations [[AAB]]
\begin{align}(b^{k_j}_{i-j-1}=)
&(d_{i-j}^{k_j-1}-d_{i-j-1}^{k_j})b_{i-j}^{k_j-1}\notag\\&=\prod^{k_{j+1}-3}_{e=k_j}(a^{e}_{m-i+j-e}-a^{e+1}_{m-i+j-e-1})(a^{k_{j+1}-2}_{m-i-k_{j+1}+j+2}-a^{k_{j+1}-1}_{m-i-k_{j+1}+j+1})b_{i-j-1}^{k_{j+1}-1}.\notag 
\end{align}


and the relations[[AAC]]
\begin{align}(c^{k_j-1}_{m-i-k_j+j}=)
&c_{m-i-k_{j-1}+j-1}^{k_{j-1}}\prod_{e=k_{j-1}}^{k_j-3}(a^{e}_{m-i+j-e-1}-a^{e+1}_{m-i+j-e-2})(a^{k_j-2}_{m-i-k_j+j+1}-a^{k_j-1}_{m-i-k_j+j})\notag\\&=(d_{i-j}^{k_j-1}-d_{i-j-1}^{k_j})c_{m-i-k_{j}+{j}}^{k_{j}}\notag 
\end{align}
$for\; j\;=1,2,\ldots, l_1-2\;$

\end{step}

\begin{step} Let us look at up to $A_1A_2\ldots A_{l_1-1}B_{l_1}$ 
Let us recall $A_1A_2\ldots A_{l_i-1}$ where $A_j=Q_{i-j+1},Q_{i-j+1}, \ldots, Q_{i-j+1}$ and $|A_j|=k_{j}-k_{j-1}\geqslant 2$.
$$ a_{m-i}^0,c_{m-i}^0,$$
$$a_{m-i-1}^1,a_{m-i-2}^{2},\ldots,a_{m-i-k_1+2}^{k_1-2},$$
$$d_{i-1}^{k_1-1},b_{i-1}^{k_1-1}$$
$$a_{m-i-k_1+1}^{k_1},c_{m-i-k_1+1}^{k_1}$$
$$a_{m-i-k_1}^{k_1+1},\ldots, a_{m-i-k_2+3}^{k_2-2}$$
$$d_{i-2}^{k_2-1},b_{i-2}^{k_2-1}$$
$$a_{m-i-k_2+2}^{k_2}, c_{m-i-k_2+2}^{k_2}$$
$$a_{m-i-k_2+1}^{k_2+1},\ldots, a_{m-i-k_3+4}^{k_3-2},$$
$$d_{i-3}^{k_3-1}, b_{i-3}^{k_3-1}$$
$$\ldots$$
$$\ldots$$
$$d_{i-l_1+2}^{k_{ l_1-2}-1},b_{i-l_1+2}^{k_ { l_1-2}-1}$$
$$a_{m-i-k_{ l_1-2}+ l_1-2}^{k_{ l_1-2}},c_{m-i-k_{ l_1-2}+ l_1-2}^{k_{ l_1-2}}$$
$$a_{m-i-k_{ l_1-2}+l_1-3}^{k_{ l_1-2}+1},\ldots, a_{m-i-k_{ l_1-1}+l_1}^{k_{l_1-1}-2}$$
$$d_{i-l_1+1}^{k_{l_1-1}-1}, b_{i-l_1+1}^{k_{l_1-1}-1}$$
$$d_{i-l_1}^{k_{l_1-1}}, d_{i-l_1-1}^{k_{l_1-1}},\ldots, d_{i-l_1+k_{l_1-1}-k_{l_1}+1}^{k_{l_1}-2}$$
$$a_1^{k_{l_1}-1},\ldots, a_{m-i-k_{l_1}+l_1}^{k_{l_1}-1}, d_1^{k_{l_1}-1},\ldots,d_{i-l_1+k_{l_1-1}-k_{l_1}}^{k_{l_1}-1}, b_{i-l_1+k_{l_1}-k_{l_1}}^{k_{l_1}-1}, c^{{l_1}-1}_{m-i-k_{l_1}+l_1}$$

with one equation [ABC] (Between the block $A_{l_1-1}$ and the block $B_{l_1}$, equation coming from c)
\begin{align}
\notag(c^{k_{l_1-1}-1}_{m-i-k_{l_1-1}+{l_1-1}}=)
\end{align}
\begin{align}
\notag&c^{k_{l_1}-1}_{m-i-k_{l_1-1}+l_1-1}\prod^{k_{l_1}-1}_{e=k_{l_1-1}}(d_{i-l_1+1-e+k_{l_1-1}}^{e-1}-d_{i-l_1-e-1+k_{l_1-1}}^{e})\notag\\=&\prod^{k_{l_1-1}-3}_{e=k_{l_1-2}}( a_{m-i+l_1-2-e}^{e}-a_{m-i+l_1-2-e-1}^{k_{l_1-2}+e+1})(a_{m-i-k_{ l_1-1}+l_1}^{k_{ l_1-1}-2}-a_{m-i-k_{l_1-1}+l_1-1}^{k_{l_1-1}-1})c_{m-i-k_{l_1-2}+l_1-2}^{k_{l_1-2}}\notag.
\end{align}

Thus totally 3 kinds from Step 1 and above,

with  relations [[AAB]]
\begin{align}
\notag(b^{k_j}_{i-j-1}=)
\end{align}

\begin{align}
\notag&(d_{i-j}^{k_j-1}-d_{i-j-1}^{k_j})b_{i-j}^{k_j-1}\notag\\&=\prod^{k_{j+1}-3}_{e=k_{j}}(a^{e}_{m-i+j-e}-a^{e+1}_{m-i+j-e-1})(a^{k_{j+1}-2}_{m-i-k_{j+1}+j+2}-a^{k_{j+1}-1}_{m-i-k_{j+1}+j+1})b_{i-j-1}^{k_{j+1}-1}.\notag 
\end{align}

and relations [[AAC]]
\begin{align}
(c^{k_j-1}_{m-i-k_j+j}=)
\end{align}
\begin{align}
\notag&c_{m-i-k_{j-1}+j-1}^{k_{j-1}}\prod_{e=k_{j-1}}^{k_j-3}(a^{e}_{m-i+j-e-1}-a^{e+1}_{m-i+j-e-2})(a^{k_j-2}_{m-i-k_j+j+1}-a^{k_j-1}_{m-i-k_j+j})\notag\\&=(d_{i-j}^{k_j-1}-d_{i-j-1}^{k_j})c_{m-i-k_{j}+{j}}^{k_{j}}\notag
\end{align}
$for\; j\;=1,2,\ldots, l_1-2\;$  where $a^{k_{j}-1}_{m-i-k_j+j}$ ,\; $d^{k_j}_{i-j-1}$ for $ j=1,\ldots, l_1-2$ and $a^{k_{l_1-1}-1}_{m-i+k_{l_1}+l_1}$ be in the last page of the proof.
\begin{step}
$$A_1,\ldots,B_{l_1},\ldots, B_{l_2},\ldots,B_{l_t}$$
Let $z_i=k_{l_{i}-1}-k_{l_i}$ for $i=1,\ldots,t$.
$$ a_{m-i}^{0},c_{m-i}^{0},$$
$$a_{m-i-1}^{k_0+1},a_{m-i-2}^{k_0+2},\ldots, a_{m-i-k_1+2}^{k_1-2},$$
$$d_{i-1}^{k_1-1},b_{i-1}^{k_1-1}$$
$$a_{m-i-k_1+1}^{k_1},c_{m-i-k_1+1}^{k_1}$$
$$a_{m-i-k_1}^{k_1+1},\ldots, a_{m-i-k_2+3}^{k_2-2}$$
$$d_{i-2}^{k_2-1},b_{i-2}^{k_2-1}$$
$$a_{m-i-k_2+2}^{k_2}, c_{m-i-k_2+2}^{k_2}$$
$$a_{m-i-k_2+1}^{k_2+1},\ldots, a_{m-i-k_3+4}^{k_3-2},$$
$$d_{i-3}^{k_3-1}, b_{i-3}^{k_3-1}$$
$$\ldots$$
$$\ldots$$
$$d_{i-l_1+2}^{k_{l_1-2}-1},b_{i-l_1+2}^{k_ { l_1-2}-1}$$
$$a_{m-i-k_{ l_1-2}+ l_1-2}^{k_{l_1-2}},c_{m-i-k_{l_1-2}+ l_1-2}^{k_{l_1-2}}$$
$$a_{m-i-k_{ l_1-2}+l_1-3}^{k_{ l_1-2}+1},\ldots, a_{m-i-k_{ l_1-1}+l_1}^{k_{ l_1-1}-2}$$
$$d_{i-l_1+1}^{k_{l_1-1}-1}, b_{i-l_1+1}^{k_{l_1-1}-1}$$
\begin{center}
\textbf{$A_{l_1-1}----------------------------------------------$}
\end{center}
$$d_{i-l_1}^{k_{l_1-1}}, d_{i-l_1-1}^{k_{l_1-1}+1},\ldots, d_{i-l_1+z_1+2}^{k_{l_1}-2}$$
$$d_{i-l_1+z_1+1}^{k_{l_1}-1}$$
\begin{center}
\textbf{$B_{l_1}------------------------------------------$}
\end{center}
$$a^{k_{l_1}}_{m-i-k_{l_1}+l_1-z_1-1},c^{k_{l_1}}_{m-i-k_{l_1}+l_1-z_1-1},$$
$$a_{m-i-k_{l_1}+l_1-z_1-2}^{k_{l_1}+1},a_{m-i-k_{l_1}+l_1-z_1-3}^{k_{l_1}+2},\ldots,a_{m-i+l_1-k_{l_1+1}-z_1+1}^{k_{l_1+1}-2},$$
$$d_{i-l_1+z_1}^{k_{l_1+1}-1},b_{i-l_1+z_1}^{k_{l_1+1}-1}$$
$$a_{m-i-k_{l_1+1}-z_1+l_1}^{k_{l_1+1}},c_{m-i-k_{l_1+1}-z_1+l_1}^{k_{l_1+1}}$$
$$a_{m-i-k_1}^{k_{l_1+1}+1},\ldots, a_{m-i-k_2+3}^{k_{l_1+2}-2}$$
$$d_{i-2}^{k_{l_1+2}-1},b_{i-2}^{k_{l_1+2}-1}$$
$$a_{m-i-k_2+2}^{k_{l_1+2}}, c_{m-i-k_2+2}^{k_{l_1+2}}$$
$$a_{m-i-k_{l_1+2}+1}^{k_{l_1+2}+1},\ldots, a_{m-i-k_{l_1+3}+4}^{k_{l_1+3}-2},$$
$$d_{i-3}^{k_{l_1+3}-1}, b_{i-3}^{k_{l_1+3}-1}$$
$$a_{m-i-k_{l_1+3}+3}^{k_{l_1+3}},c_{m-i-k_{l_1+3}+3}^{k_{l_1+3}}$$
$$a_{m-i-k_{l_1+3}+2}^{k_{l_1+3}+1},\ldots, a_{m-i-k_{l_1+4}+5}^{k_{l_1+4}-2}$$
$$\ldots$$
$$\ldots$$
$$d_{i-l_2+z_1+1}^{k_{{l_{2}}-2}-1},b_{i-l_2+z_1+1}^{k_{{l_{2}}-2}-1}$$
$$a_{m-i+l_2-z_1-k_{l_2-2}-3}^{k_{{l_2}-2}},c_{m-i+l_2-z_1-k_{l_2-2}-3}^{k_{{l_2-2}}}$$
$$a_{m-i+l_2-z_1-k_{l_2-2}-4}^{k_{{l_{2}}-2}+1},\ldots, a_{m-i+l_2-z_1-k_{l_2-1}-1}^{k_{l_{2}-1}-2}$$
$$d_{i-l_2+z_1+2}^{k_{{l_{2}}-1}-1}, b_{i-l_2+z_1+2}^{k_{{l_{2}}-1}-1}$$
\textbf{$A_{l_2-1}----------------------------------------$}
$$\ldots$$
$$\ldots$$
\begin{center}
\textbf{$B_{l_{t-1}}------------------------------------------$}
\end{center}
$$ a_{m-k_{l_{t-1}}-i+l_{t-1}-\sum^{t-1}_{i=1}z_i-t+1}^{k_{l_{t-1}}},c_{m-k_{l_{t-1}}-i+l_{t-1}-\sum^{t-1}_{i=1}z_i-t+1}^{k_{l_{t-1}}},$$
$$a_{m-k_{l_{t-1}}-i+l_{t-1}-\sum^{t-1}_{i=1}z_i-t}^{k_{l_{t-1}}+1},a_{m-k_{l_{t-1}}-i+l_{t-1}-\sum^{t-1}_{i=1}z_i-t-1}^{k_{l_{t-1}}+2},\ldots, a_{m-i+l_{t-1}-\sum^{t-1}_{i=1}z_i-t+3-k_{l_{t-1}+1}}^{k_{l_{t-1}+1}-2},$$
$$d_{i-{l_{t-1}}+\sum z_i^{t-1}+t-2}^{k_{l_{t-1}+1}-1},b_{i-{l_{t-1}}+\sum z_i^{t-1}+t-2}^{k_{l_{t-1}+1}-1}$$
$$a_{m-k_{l_{t-1}+1}-i+l_{t-1}-\sum^{t-1}_{i=1}z_i-t+2}^{k_{l_{t-1}+1}},c_{m-k_{l_{t-1}+1}-i+l_{t-1}-\sum^{t-1}_{i=1}z_i-t+2}^{k_{l_{t-1}+1}}$$
$$\ldots$$
$$\ldots$$
$$d_{i-l_{t}+\sum^{t-1}_{i=1} z_i+t-1}^{k_{ l_t-2}-1},b_{i-l_{t}+\sum^{t-1}_{i=1} z_i+t-1}^{k_ {l_t-2}-1}$$
$$a_{m-i-k_{l_t-2}-\sum^{t-1}_{i=1} z_i+l_t-t-1}^{k_{l_t-2}},c_{m-i-k_{l l_t-2}-\sum^{t-1}_{i=1} z_i+l_t-t-1}^{k_{l_t-2}}$$
$$a_{m-i-k_{l_t-2}-\sum^{t-1}_{i=1} z_i+l_t-t-2}^{k_{l_t-2}+1},\ldots, a_{m-i-k_{ l_t-1}-\sum^{t-1}_{i=1} z_i+l_t-t+1}^{k_{l_t-1}-2}$$
$$d_{i-l_{t}+\sum^{t-1}_{i=1} z_i+t}^{k_{l_t-1}-1}, b_{i-l_{t}+\sum^{t-1}_{i=1} z_i+t}^{k_{l_t-1}-1}$$
\begin{center}
\textbf{$A_{l_t-1}-------------------------------------------$}
\end{center}
$$d_{i-l_{t}+\sum^{t-1}_{i=1} z_i+t-1}^{k_{l_t-1}}, d_{i-l_{t}+\sum^{t-1}_{i=1} z_i+t-2}^{k_{l_t-1}+1}\ldots, d_{i-l_{t}+\sum^{t}_{i=1} z_i+t+1}^{k_{l_t}-2}$$
$$a_1^{k_{l_t}-1}, \ldots, a^{k_{l_t}-1}_{m-k_{l_t}-i+l_t-\sum_{i=1}^{t-1} z_i-t},d_1^{k_{l_t}-1},\ldots, d_{i-l_{t}+\sum^{t}_{i=1} z_i+t}^{k_{l_t}-1}, c^{k_{l_t}-1}_{m-k_{l_t}-i+l_t-\sum_{i=1}^{t-1} z_i-t}$$
\begin{center}
\textbf{$B_{l_t}----------------------------------------------$}
\end{center}

$\text{Hilb}_{m_{\bullet}}(\tilde{R}/B)$ at the point  $$A_1..B_{l_1}.. B_{l_2}\ldots B_{l_t}$$
is defined by above parameters 
with equations [[ABC]]
\begin{align}
\notag(c^{k_{l_j-1}-1}_{m-k_{l_j-1}-i-\sum_{i=1}^{j-1} z_i+l_j-t}=) \;for\; j=1,2,\ldots, t.
\end{align}\small
\begin{align}
\notag&c^{k_{l_j}}_{m-i-k_{l_j}+l_j-\sum^{j}_{i=1} z_i-j}\left\{ \prod^{k_{l_j}-1}_{e=k_{l_{j}-1}}\left(d_{i-l_j+\sum^{j-1}_{i=1}z_i+j-e-1+k_{l_{j}-1}}^{e-1}-d_{i-l_j+\sum^{j-1}_{i=1}z_i+j-e-2+k_{l_{j}-1}}^{e}\right)\right\}=\notag\\
&\left\{\prod^{k_{l_j-1}-2}_{e=k_{l_j-2}}\left(a_{m-i-\sum^{j-1}_{i=1} z_i+l_j-j-1-e}^{e}-a_{m-i-\sum^{j-1}_{i=1} z_i+l_j-j-2-e}^{e+1}\right)\right\}c_{m-i-\sum^{j-1}_{i=1} z_i+l_j-j-1-k_{l_j-2}}^{k_{l_j-2}}\notag
\end{align}
with
$$a^{k_{l_j-1}-1}_{m-i-k_{l_j-1}-\sum^{j-1}_{i=1}z_i+l_j-j+1}=a^{k_{l_j}-1}_{m-i-k_{l_j-1}-\sum^{j-1}_{i=1}z_i+l_j-j+1}$$ $$+ \sum_{p=k_{l_j-1}}^{k_{l_j}-1} \prod_{q\geqslant k_{l_j-1}, q\neq p}^{k_{l_j}-1}(d^{q-1}_{i-l_j+\sum^{j-1}_{i=1}z_i+j-q+k_{l_j-1}}-d^{q}_{i-l_j+\sum^{j-1}_{i=1}z_i+j-q+k_{l_j-1}-1})b^{k_{l_j-1}-1}_{i-1}c^{k_{l_j}-1}_{m-i-k+1}$$
where $j=1,2,\ldots, t.$
with equations [[BAB]]
\begin{align}
\notag(b^{k_{l_j}}_{i+\sum^{j}_{p=1} z_i-l_j+j}=) \;for \; j=1,2,\ldots, t-1
\end{align}
\begin{align}
\notag&b^{k_{l_j-1}-1}_{i+\sum^{j}_{p=1} z_i-l_j+j-1}\left\{ \prod^{k_{l_j}}_{e=k_{l_{j}-1}}\left(d_{i-l_j+\sum^{j-1}_{i=1}z_i+j-e-1+k_{l_{j}-1}}^{e-1}-d_{i-l_j+\sum^{j-1}_{i=1}z_i+j-e-2+k_{l_{j}-1}}^{e}\right)\right\}\notag\\
&=b^{k_{l_j+1}-1}_{i+\sum^{j}_{p=1} z_i-l_j+j}\prod^{k_{l_j+1}-2}_{e=k_{l_j}}(a_{m-i+l_j-\sum^j_{i=1}z_i-j-e}^{e}-a_{m-i+l_j-\sum^j_{i=1}z_i-j-e-1}^{e+1})\notag
\end{align}

\begin{align}\notag(b^{k_{(l_{j}+s+1)}}_{i-{l_{j}}+\sum^j z_i+j-2-s}=)
\end{align}
with the relations [[SB]]
\begin{align}
\notag&(d_{i-{l_{j}}+\sum_{i=1}^j z_i+j-1-s}^{k_{(l_{j}+1+s)}-1}-d_{i-{l_{j}}+\sum^j z_i+j-2-s}^{k_{(l_{j}+1+s)}})b_{i-{l_{j}}+\sum^j z_i+j-1-s}^{k_{(l_{j}+1+s)}-1}\notag\\&=\left\{\prod^{k_{(l_j+2+s)}-2}_{e=k_{(l_{j}+1+s)}}\left(a^{e}_{m-i+l_j-\sum^j_{i=1} z_i-j+1+s-e}-a^{e+1}_{m-i+l_j-\sum^j_{i=1} z_i-j+s-e}\right)\right\}b_{i-{l_{j}}+\sum^j z_i+j-2-s}^{k_{(l_{j}+2+s)}-1}.\notag 
\end{align}
with $l_0=0$ , $s=0,..,l_{j+1}-l_{j}-3$ and $j=0,\ldots,t-1$   


and the relations[[AAC]]
\small
\begin{align}
\notag(c^{k_{(l_{j}+s+1)}-1}_{m-i+l_j-\sum^j_{i=1} z_i-j+s-k_{(l_j+1+s)}+1}=)
\end{align}
\begin{align}
&\left\{\prod^{k_{(l_j+1+s)}-2}_{e=k_{(l_j+s)}}\left(a^{e}_{m-i+l_j-\sum^j_{i=1} z_i-j+s-e}-a^{e+1}_{m-i+l_j-\sum^j_{i=1} z_i-j+s-e-1}\right)\right\}c_{m-i-k_{(l_j+s)}+l_j-\sum^j_{i=1} z_i-j+s-e}^{k_{(l_{j}+s)}}\notag\\&=(d_{i-{l_{j}}+\sum_{i=1}^j z_i+j-1-s}^{k_{(l_{j}+1+s
)}-1}-d_{i-{l_{j}}+\sum^j z_i+j-2-s}^{k_{(l_{j}+1+s)}})c_{m-i+l_j-\sum^j_{i=1} z_i-j+s-k_{(l_j+1+s)}+1}^{k_{(l_{j}+1+s)}}\notag
\end{align}

with $l_0=0$ , $s=0,..,l_{j+1}-l_{j}-3$ and $j=0,\ldots,t-1$   
where\small
$$a^{k_{(l_j+1+s)}-1}_{m-i+l_j-\sum^j_{i=1} z_i-j+s-k_{(l_j+1+s)}}=$$ $$a^{k_{(l_j+1+s)}}_{m-i+l_j-\sum^j_{i=1} z_i-j+s-k_{(l_j+1+s)}}+b^{k_{(l_j+1+s)}-1}_{i-l_j+\sum^j_{i=1}z_i+j-s}c^{k_{(l_j+1+s)}}_{m-i+l_j-\sum^j_{i=1} z_i-j+s-k_{(l_j+1+s)}}\;$$ $ when\; s=0,..,l_{j+1}-l_{j}-3\; and \;j=0,\ldots,t-1$ 
 and 

$$d^{k_{l_j+s+1}}_{i-l_j+\sum^j_{i=1}z_i+j-s-1}=d^{k_{l_j+s+1}-1}_{i-l_j+\sum^j_{i=1}z_i+j-s}-$$ $$(k_{l_j+s+2}-k_{l_j+s+1}-1)\prod^{k_{l_j+s+2}-3}_{e=k_{l_j+s+1}}(a^{e}_{m-i+j-e}-a^{e+1}_{m-i+j-e-1})(a^{k_{l_j+s+2}-2}_{m-i-k_{j+1}+j+2}-a^{k_{l_j+s+2}-1}_{m-i-k_{j+1}+j+1})b^{k_{l_j+s+2}-1}_{{i-l_j+\sum^j_{i=1}z_i+j-s}-1}$$
$ when\; s=0,..,l_{j+1}-l_{j}-3\; and \;j=0,\ldots,t-1.$ 
\end{step}

\begin{step}
$$A_1..B_{l_1}.. B_{l_2}\ldots B_{l_t}A_{l_{t+1}}$$
Let $z_i=k_{l_{i}-1}-k_{l_i}$ for $i=1,\ldots, t$.
$$ a_{m-i}^{0},c_{m-i}^{0},$$
$$a_{m-i-1}^{k_0+1},a_{m-i-2}^{k_0+2},\ldots,a_{m-i-k_1+2}^{k_1-2},$$
$$d_{i-1}^{k_1-1},b_{i-1}^{k_1-1}$$
$$a_{m-i-k_1+1}^{k_1},c_{m-i-k_1+1}^{k_1}$$
$$a_{m-i-k_1}^{k_1+1},\ldots, a_{m-i-k_2+3}^{k_2-2}$$
$$d_{i-2}^{k_2-1},b_{i-2}^{k_2-1}$$
$$a_{m-i-k_2+2}^{k_2}, c_{m-i-k_2+2}^{k_2}$$
$$a_{m-i-k_2+1}^{k_2+1},\ldots, a_{m-i-k_3+4}^{k_3-2},$$
$$d_{i-3}^{k_3-1}, b_{i-3}^{k_3-1}$$
$$\ldots$$
$$\ldots$$
$$d_{i-l_1+2}^{k_{ l_1-2}-1},b_{i-l_1+2}^{k_ {l_1-2}-1}$$
$$a_{m-i-k_{ l_1-2}+ l_1-2}^{k_{ l_1-2}},c_{m-i-k_{l_1-2}+ l_1-2}^{k_{ l_1-2}}$$
$$a_{m-i-k_{ l_1-2}+l_1-3}^{k_{l_1-2}+1},\ldots, a _{m-i-k_{l_1-1}+l_1}^{k_{l_1-1}-2}$$

$$d_{i-l_1+1}^{k_{l_1-1}-1}, b_{i-l_1+1}^{k_{l_1-1}-1}$$
\begin{center}
\textbf{$A_{l_1-1}-------------------------------------------$}
\end{center}
$$d_{i-l_1}^{k_{l_1-1}}, d_{i-l_1-1}^{k_{l_1-1}+1},\ldots, d_{i-l_1+z_1+2}^{k_{l_1}-2}$$
$$d_{i-l_1+z_1+1}^{k_{l_1}-1}$$
\begin{center}
\textbf{$B_{l_1}----------------------------------------------$}
\end{center}
$$a^{k_{l_1}}_{m-i-k_{l_1}+l_1-z_1-1},c^{k_{l_1}}_{m-i-k_{l_1}+l_1-z_1-1},$$
$$a_{m-i-k_{l_1}+l_1-z_1-2}^{k_{l_1}+1},a_{m-i-k_{l_1}+l_1-z_1-3}^{k_{l_1}+2},\ldots,a_{m-i+l_1-k_{l_1+1}-z_1+1}^{k_{l_1+1}-2},$$
$$d_{i-l_1+z_1}^{k_{l_1+1}-1},b_{i-l_1+z_1}^{k_{l_1+1}-1}$$
$$a_{m-i-k_{l_1+1}-z_1+l_1}^{k_{l_1+1}},c_{m-i-k_{l_1+1}-z_1+l_1}^{k_{l_1+1}}$$
$$a_{m-i-k_1}^{k_{l_1+1}+1},\ldots, a_{m-i-k_2+3}^{k_{l_1+2}-2}$$
$$d_{i-2}^{k_{l_1+2}-1},b_{i-2}^{k_{l_1+2}-1}$$
$$a_{m-i-k_2+2}^{k_{l_1+2}}, c_{m-i-k_2+2}^{k_{l_1+2}}$$
$$a_{m-i-k_{l_1+2}+1}^{k_{l_1+2}+1},\ldots, a_{m-i-k_{l_1+3}+4}^{k_{l_1+3}-2},$$
$$d_{i-3}^{k_{l_1+3}-1}, b_{i-3}^{k_{l_1+3}-1}$$
$$a_{m-i-k_{l_1+3}+3}^{k_{l_1+3}},c_{m-i-k_{l_1+3}+3}^{k_{l_1+3}}$$
$$a_{m-i-k_{l_1+3}+2}^{k_{l_1+3}+1},\ldots, a_{m-i-k_{l_1+4}+5}^{k_{l_1+4}-2}$$
$$\ldots$$
$$\ldots$$
$$d_{i-l_2+z_1+1}^{k_{{l_{2}}-2}-1},b_{i-l_2+z_1+1}^{k_{{l_{2}}-2}-1}$$
$$a_{m-i+l_2-z_1-k_{l_2-2}-3}^{k_{{l_2}-2}},c_{m-i+l_2-z_1-k_{l_2-2}-3}^{k_{{l_2-2}}}$$
$$a_{m-i+l_2-z_1-k_{l_2-2}-4}^{k_{{l_{2}}-2}+1},\ldots, a_{m-i+l_2-z_1-k_{l_2-1}-1}^{k_{l_{2}-1}-2}$$
$$d_{i-l_2+z_1+2}^{k_{{l_{2}}-1}-1}, b_{i-l_2+z_1+2}^{k_{{l_{2}}-1}-1}$$
\begin{center}
\textbf{$A_{l_2-1}----------------------------------------------$}
\end{center}
$$\ldots$$
$$\ldots$$

\begin{center}
\textbf{$B_{l_{t-1}}----------------------------------------------$}
\end{center}
$$ a_{m-k_{l_{t-1}}-i+l_{t-1}-\sum^{t-1}_{i=1}z_i-t+1}^{k_{l_{t-1}}},c_{m-k_{l_{t-1}}-i+l_{t-1}-\sum^{t-1}_{i=1}z_i-t+1}^{k_{l_{t-1}}},$$
$$a_{m-k_{l_{t-1}}-i+l_{t-1}-\sum^{t-1}_{i=1}z_i-t}^{k_{l_{t-1}}+1},a_{m-k_{l_{t-1}}-i+l_{t-1}-\sum^{t-1}_{i=1}z_i-t-1}^{k_{l_{t-1}}+2},\ldots,a_{m-i+l_{t-1}-\sum^{t-1}_{i=1}z_i-t+3-k_{l_{t-1}+1
}}^{k_{l_{t-1}+1}-2},$$
$$d_{i-{l_{t-1}}+\sum z_i^{t-1}+t-2}^{k_{l_{t-1}+1}-1},b_{i-{l_{t-1}}+\sum z_i^{t-1}+t-2}^{k_{l_{t-1}+1}-1}$$
$$a_{m-k_{l_{t-1}+1}-i+l_{t-1}-\sum^{t-1}_{i=1}z_i-t+2}^{k_{l_{t-1}+1}},c_{m-k_{l_{t-1}+1}-i+l_{t-1}-\sum^{t-1}_{i=1}z_i-t+2}^{k_{l_{t-1}+1}}$$
$$a_{m-i-k_1}^{k_1+1},\ldots, a_{m-i-k_2+3}^{k_2-2}$$
$$d_{i-2}^{k_2-1},b_{i-2}^{k_2-1}$$
$$a_{m-i-k_2+2}^{k_2}, c_{m-i-k_2+2}^{k_2}$$
$$a_{m-i-k_2+1}^{k_2+1},\ldots, a_{m-i-k_3+4}^{k_3-2},$$
$$d_{i-3}^{k_3-1}, b_{i-3}^{k_3-1}$$
$$\ldots$$
$$\ldots$$
$$d_{i-l_{t}+\sum^{t-1}_{i=1} z_i+t-1}^{k_{ l_t-2}-1},b_{i-l_{t}+\sum^{t-1}_{i=1} z_i+t-1}^{k_ { l_t-2}-1}$$
$$a_{m-i-k_{l_t-2}-\sum^{t-1}_{i=1} z_i+l_t-t-1}^{k_{ l_t-2}},c_{m-i-k_{ l_t-2}-\sum^{t-1}_{i=1} z_i+l_t-t-1}^{k_{l_t-2}}$$
$$a_{m-i-k_{l_t-2}-\sum^{t-1}_{i=1} z_i+l_t-t-2}^{k_{l_t-2}+1},\ldots, a _{m-i-k_{l_t-1}-\sum^{t-1}_{i=1} z_i+l_t-t+1}^{k_{l_t-1}-2}$$

$$d_{i-l_{t}+\sum^{t-1}_{i=1} z_i+t}^{k_{l_t-1}-1}, b_{i-l_{t}+\sum^{t-1}_{i=1} z_i+t}^{k_{l_t-1}-1}$$
\begin{center}
\textbf{$A_{l_t-1}----------------------------------------------$}
\end{center}

$$d_{i-l_{t}+\sum^{t-1}_{i=1} z_i+t-1}^{k_{l_t-1}}, d_{i-l_{t}+\sum^{t-1}_{i=1} z_i+t-2}^{k_{l_t-1}+1}\ldots, d_{i-l_{t}+\sum^{t}_{i=1} z_i+t+1}^{k_{l_t}-2}$$
$$d_{i-l_{t}+\sum^{t}_{i=1} z_i+t}^{k_{l_t}-1}$$
\begin{center}
\textbf{$B_{l_t}----------------------------------------------$}
\end{center}
$$a^{k_{l_t}}_{m-i-k_{l_t}+l_t-\sum^{t}_{i=1} z_i-t},c^{k_{l_t}}_{m-i-k_{l_t}+l_t-\sum^{t}_{i=1} z_i-t},$$
$$a_{m-i-k_{l_t}+l_t-\sum^{t-1}_{i=1} z_i-t-1}^{k_{l_t}+1},a_{m-i-k_{l_t}+l_t-\sum^{t}_{i=1} z_i-t-2}^{k_{l_t}+2},\ldots,a_{m-i-k_{l_t+1}+l_t-\sum^{t}_{i=1} z_i-t+2}^{k_{l_t+1}-2},$$
$$d_{i-l_t+\sum^t_{i=1}z_i+t-1}^{k_{l_t+1}-1},b_{i-l_t+\sum^t_{i=1}z_i+t-1}^{k_{l_t+1}-1}$$
$$a_{m-i-k_{l_t+1}+l_t-\sum^t_{i=1} z_i-t+1}^{k_{l_t+1}},c_{m-i-k_{l_t+1}+l_t-\sum^t_{i=1} z_i-t+1}^{k_{l_t+1}}$$
$$a_{m-i-k_1}^{k_{l_1+1}+1},\ldots, a_{m-i-k_2+3}^{k_{l_1+2}-2}$$
$$d_{i-2}^{k_{l_1+2}-1},b_{i-2}^{k_{l_1+2}-1}$$
$$a_{m-i-k_2+2}^{k_{l_1+2}}, c_{m-i-k_2+2}^{k_{l_1+2}}$$
$$a_{m-i-k_{l_1+2}+1}^{k_{l_1+2}+1},\ldots, a_{m-i-k_{l_1+3}+4}^{k_{l_1+3}-2},$$
$$d_{i-3}^{k_{l_1+3}-1}, b_{i-3}^{k_{l_1+3}-1}$$
$$a_{m-i-k_{l_1+3}+3}^{k_{l_1+3}},c_{m-i-k_{l_1+3}+3}^{k_{l_1+3}}$$
$$a_{m-i-k_{l_1+3}+2}^{k_{l_1+3}+1},\ldots, a_{m-i-k_{l_1+4}+5}^{k_{l_1+4}-2}$$
$$\ldots$$
$$\ldots$$
$$d_{i-{l_{t+1}}+\sum^t_{i=1} z_i+t+1}^{k_{{l_{t+1}}-1}-1},b_{i-{l_{t+1}}+\sum^t_{i=1} z_i+t+1}^{k_{{l_{t+1}}-1}-1}$$
$$a_{m-i-k_{l_{t+1}-1}+{l_{t+1}}-\sum^t z_i-t-1}^{k_{{l_{t+1}}-1}},c_{m-i-k_{l_{t+1}-1}+{l_{t+1}}-\sum^t z_i-t-1}^{k_{{l_{t+1}}-1}}$$
$$a_{m-i-k_{l_{t+1}-1}+{l_{t+1}}-\sum^t z_i-t-2}^{k_{{l_{t+1}}-1}+1},\ldots, a _{m-i+l_{t+1}-\sum^tz_i-t-k_{l_{t+1}}+1}^{k_{l_{t+1}}-2}$$
$$a_1^{k_{l_{t+1}}-1},\ldots, a_{m-i+l_{t+1}-\sum^tz_i-t-k_{l_{t+1}}}^{k_{l_{t+1}}-1}, d_1^{k_{l_{t+1}}-1},\ldots,d_{i-{l_{t+1}}+\sum^t_{i=1}z_i+t}^{k_{l_{t+1}}-1}, b_{i-{l_{t+1}}+\sum^t_{i=1}z_i+t}^{k_{{l_{t+1}}}-1}$$
\begin{center}
\textbf{$A_{l_{t+1}}----------------------------\;\;\;with\;l_{t+1}=h\;$}
\end{center}
$\text{Hilb}_{m_{\bullet}}(\tilde{R}/B)$ at the point  $$A_1..B_{l_1}.. B_{l_2}\ldots.B_{l_t}A_{l_{t+1}}$$
is defined by above parameters 
with equations [[ABC]]\small
\begin{align}
\notag(c^{k_{l_j-1}-1}_{m-k_{l_j-1}-i-\sum_{i=1}^{j-1} z_i+l_j-t}=) \;for\; j=1,2,\ldots, t
\end{align}
\begin{align}
\notag&c^{k_{l_j}}_{m-i-k_{l_j}+l_j-\sum^{j}_{i=1} z_i-j}\left\{ \prod^{k_{l_j}-1}_{e=k_{l_{j}-1}}\left(d_{i-l_j+\sum^{j-1}_{i=1}z_i+j-e-1+k_{l_{j}-1}}^{e-1}-d_{i-l_j+\sum^{j-1}_{i=1}z_i+j-e-2+k_{l_{j}-1}}^{e}\right)\right\}\notag\\
&=\left\{\prod^{k_{l_j-1}-2}_{e=k_{l_j-2}}\left(a_{m-i-\sum^{j-1}_{i=1} z_i+l_j-j-1-e}^{e}-a_{m-i-\sum^{j-1}_{i=1} z_i+l_j-j-2-e}^{e+1}\right)\right\}c_{m-i-\sum^{j-1}_{i=1} z_i+l_j-j-1-k_{l_j-2}}^{k_{l_j-2}}\notag
\end{align}

with equations [[BAB]]
\begin{align}
\notag(b^{k_{l_j}}_{i+\sum^{j}_{p=1} z_i-l_j+j-1}=) \;for \; j=1,2,\ldots, t
\end{align}
\begin{align}
\notag&b^{k_{l_j-1}-1}_{i+\sum^{j-1}_{p=1} z_i-l_j+j-1}\left\{ \prod^{k_{l_j}}_{e=k_{l_{j}-1}}\left(d_{i-l_j+\sum^{j-1}_{i=1}z_i+j-e-1+k_{l_{j}-1}}^{e-1}-d_{i-l_j+\sum^{j-1}_{i=1}z_i+j-e-2+k_{l_{j}-1}}^{e}\right)\right\}\notag\\
&=b^{k_{l_j+1}-1}_{i+\sum^{j}_{p=1} z_i-l_j+j-1}\prod^{k_{l_j+1}-2}_{
e=k_{l_j}}(a_{m-i+l_j-\sum^j_{i=1}z_i-j}^{e}-a_{m-i+l_j-\sum^j_{i=1}z_i-j-e-1}^{e+1})\notag
\end{align}

\begin{align}\notag(b^{k_{(l_{j}+s+1)}}_{i-{l_{j}}+\sum^j z_i+j-2-s}=)
\end{align}
with the relations [[AAB]]
\begin{align}
\notag&(d_{i-{l_{j}}+\sum_{i=1}^j z_i+j-1-s}^{k_{(l_{j}+1+s)}-1}-d_{i-{l_{j}}+\sum^j z_i+j-2-s}^{k_{(l_{j}+1+s)}})b_{i-{l_{j}}+\sum^j z_i+j-1-s}^{k_{(l_{j}+1+s)}-1}\notag\\&=\left\{\prod^{k_{(l_j+2+s)}-2}_{e=k_{(l_{j}+1+s)}}\left(a^{e}_{m-i+l_j-\sum^j_{i=1} z_i-j+1+s-e}-a^{e+1}_{m-i+l_j-\sum^j_{i=1} z_i-j+s-e}\right)\right\}b_{i-{l_{j}}+\sum^j z_i+j-2-s}^{k_{(l_{j}+2+s)}-1}.\notag 
\end{align}
with $l_0=0$ , $s=0,..,l_{j+1}-l_{j}-3$ and $j=0,\ldots,t$   

and the relations[[AAC]]

\begin{align}
\notag(c^{k_{(l_{j}+s+1)}-1}_{m-i+l_j-\sum^j_{i=1} z_i-j+s-k_{(l_j+1+s)}+1}=)
\end{align}
\begin{align}
&\left\{\prod^{k_{(l_j+1+s)}-2}_{e=k_{(l_j+s)}}\left(a^{e}_{m-i+l_j-\sum^j_{i=1} z_i-j+s-e}-a^{e+1}_{m-i+l_j-\sum^j_{i=1} z_i-j+s-e-1}\right)\right\}c_{m-i-k_{(l_j+s)}+l_j-\sum^j_{i=1} z_i-j+s}^{k_{(l_{j}+s)}}\notag\\&=(d_{i-{l_{j}}+\sum_{i=1}^j z_i+j-1-s}^{k_{(l_{j}+1+s)}-1}-d_{i-{l_{j}}+\sum^j z_i+j-2-s}^{k_{(l_{j}+1+s)}})c_{m-i+l_j-\sum^j_{i=1} z_i-j+s-k_{(l_j+1+s)}+1}^{k_{(l_{j}+1+s)}}\notag
\end{align}
with $l_0=0$ , $s=0,..,l_{j+1}-l_{j}-3$ and $j=0,\ldots,t$ where

$$a^{k_{(l_j+1+s)}-1}_{m-i+l_j-\sum^j_{i=1} z_i-j+s-k_{(l_j+1+s)}}=$$
$$a^{k_{(l_j+1+s)}}_{m-i+l_j-\sum^j_{i=1} z_i-j+s-k_{(l_j+1+s)}}+b^{k_{(l_j+1+s)}-1}_{i-l_j+\sum^j_{i=1}z_i+j-s}c^{k_{(l_j+1+s)}}_{m-i+l_j-\sum^j_{i=1} z_i-j+s-k_{(l_j+1+s)}}\;$$ $ when\; s=0,..,l_{j+1}-l_{j}-3\; and \;j=0,\ldots,t$

$$d^{k_{l_j+s+1}}_{i-l_j+\sum^j_{i=1}z_i+j-s-1}=d^{k_{l_j+s+1}-1}_{i-l_j+\sum^j_{i=1}z_i+j-s}+$$ $$\left(\sum^{p=k_{l_j+s+1}}_{k_{l_j+s+2}-3}\prod^{k_{l_j+s+2}-3}_{e\geqslant k_{l_j+s+1}, e\neq p}(a^{e}_{m-i+j-e}-a^{e+1}_{m-i+j-e-1})(a^{k_{l_j+s+2}-2}_{m-i-k_{j+1}+j+2}-a^{k_{l_j+s+2}-1}_{m-i-k_{j+1}+j+1})\right)b^{k_{l_j+s+2}-1}_{{i-l_j+\sum^j_{i=1}z_i+j-s}-1}$$
$ when\; s=0,..,l_{j+1}-l_{j}-3\; and \;j=0,\ldots,t.$ 
and 
$$a^{k_{l_j-1}-1}_{m-i-k_{l_j-1}-\sum^{j-1}_{i=1}z_i+l_j-j+1}=a^{k_{l_j}-1}_{m-i-k_{l_j-1}-\sum^{j-1}_{i=1}z_i+l_j-j+1}+$$ $$ \sum_{p=k_{l_j-1}}^{k_{l_j}-1} \prod_{q\geqslant k_{l_j-1}, q\neq p}^{k_{l_j}-1}(d^{q-1}_{i-l_j+\sum^{j-1}_{i=1}z_i+j-q+k_{l_j-1}}-d^{q}_{i-l_j+\sum^{j-1}_{i=1}z_i+j-q+k_{l_j-1}-1})b^{k_{l_j-1}-1}_{i-1}c^{k_{l_j}-1}_{m-i-k+1}$$
where $j=1,2,\ldots, t.$
\end{step}




\end{step}
\end{proof}


\section{ Puctual \text{Hilb}ert scheme of points on the cusp curve.}

\subsection{As a set}

 Let R be $(\mathbb{C}[[x,y]]/(x^2-y^3))_{(x,y)}(=\mathbb{C}[[x,y]]/(x^2-y^3))$.
\begin{lemma}
Every non unit element in R can be associated to $xy^m+ay^n$, $\;y^s, \;xy^s\;or \;x$ for  some integers $0\leqslant m<n, 1\leqslant s$ and some nonzero constants a, b. 
 \end{lemma}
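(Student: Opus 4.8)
The plan is to pass to the normalization $\mathbb{C}[[t]]$ of $R$. Since $x^{2}-y^{3}$ is a Weierstrass polynomial in $x$, the assignment $x\mapsto t^{3}$, $y\mapsto t^{2}$ defines an injective $\mathbb{C}$-algebra homomorphism $R\hookrightarrow\mathbb{C}[[t]]$, whose image is exactly $\{\varphi\in\mathbb{C}[[t]] : \varphi'(0)=0\}$, the power series with vanishing linear coefficient. From this description one reads off: $R$ is a domain; an element of $R$ is a unit precisely when its constant term is nonzero; "$h$ is associated to $h'$" means $h'=uh$ for some unit $u$ of $R$; and --- the one point to keep in mind throughout --- every unit $u=u_{0}+u_{2}t^{2}+u_{3}t^{3}+\cdots$ of $R$ has no $t^{1}$-term.

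The heart of the argument is a normal-form computation in $\mathbb{C}[[t]]$. Let $h\in R$ be a nonzero non-unit and set $v:=\operatorname{ord}_{t}h$; since $R$ contains no element of order $1$, we have $v\ge 2$. Write $h=\sum_{k\ge v}h_{k}t^{k}$ with $h_{v}\ne 0$, and after multiplying by the scalar unit $h_{v}^{-1}$ assume $h_{v}=1$. I claim $h$ is associated to $t^{v}+c\,t^{v+1}$, where $c:=h_{v+1}$. Indeed, for any unit $u=u_{0}+u_{2}t^{2}+\cdots$ of $R$ one has $(uh)_{v}=u_{0}h_{v}$ and $(uh)_{v+1}=u_{0}h_{v+1}$, the second identity using precisely the absence of a $t^{1}$-term in $u$; hence the ratio $h_{v+1}/h_{v}$ is an invariant of the associate class of $h$. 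On the other hand, for $j\ge 2$ the coefficient $(uh)_{v+j}$ equals $u_{j}$ plus a polynomial in $u_{0},\dots,u_{j-1}$; so, taking $u_{0}=1$ and then defining $u_{2},u_{3},\dots$ recursively to kill each of these coefficients, one obtains a genuine unit $u\in R$ with $uh=t^{v}+c\,t^{v+1}$.

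It remains to translate this into $x,y$, via $t^{2k}=y^{k}$ and $t^{2k+1}=xy^{k-1}$, distinguishing the parity of $v$ and whether $c=0$. If $v=2k$ and $c=0$, then $h$ is associated to $y^{k}$, of the form $y^{s}$ with $s=k\ge 1$. If $v=2k$ and $c\ne 0$, then $h$ is associated to $y^{k}+c\,xy^{k-1}=c\bigl(xy^{k-1}+c^{-1}y^{k}\bigr)$, hence to $xy^{m}+ay^{n}$ with $m=k-1\ge 0$, $n=k$, $a=c^{-1}\ne 0$. If $v=2k+1$ (so $k\ge 1$) and $c=0$, then $h$ is associated to $xy^{k-1}$, which equals $x$ when $k=1$ and is of the form $xy^{s}$ with $s=k-1\ge 1$ otherwise. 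And if $v=2k+1$ and $c\ne 0$, then $h$ is associated to $xy^{k-1}+c\,y^{k+1}$, of the form $xy^{m}+ay^{n}$ with $m=k-1\ge 0$, $n=k+1$, $a=c\ne 0$. In every case the stated constraints $0\le m<n$ and $s\ge 1$ hold; in fact the binomials occurring always satisfy $n-m\in\{1,2\}$, but that sharper information is not needed here.

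I do not expect a serious obstacle, only two points needing care. The substantive one is the observation that units of $R$ carry no linear term, so that $h_{v+1}/h_{v}$ is a genuine invariant of the associate class: this is why, for instance, $x+y$ is \emph{not} associated to $y$ even though both have $t$-order $2$ (a hypothetical unit witnessing this would have to be $t+1$, which lies in $\mathbb{C}[[t]]$ but not in $R$), and hence why a second term must in general be retained in the normal form. The rest --- the recursive elimination of the coefficients $(uh)_{v+j}$ for $j\ge 2$, and the bookkeeping that assigns $t^{v}$ and $t^{v+1}$ the correct monomial $x^{\varepsilon}y^{j}$ in each parity case --- is routine. (If the constant $b$ in the statement is meant to be the leading coefficient of a normal form, it can always be normalized to $1$ by a unit, exactly as above.)
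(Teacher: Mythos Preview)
Your proof is correct and takes a genuinely different route from the paper's. The paper works directly in $R$: it writes a non-unit as $xh_1(y)+h_2(y)$, reduces to an expression of the form $u\cdot xy^m+cy^n$ with $u$ a unit, and then solves, case by case in $n$ and in $l=n-m$, an explicit infinite linear system for the coefficients of a unit $v$ realizing $v(xy^m+dy^n)=u\cdot xy^m+cy^n$. Your argument instead passes to the normalization $R\hookrightarrow\mathbb{C}[[t]]$, identifies the image as the subring of series with no linear term, and exploits the single structural fact that units of $R$ have $u_1=0$: this makes $h_{v+1}/h_v$ an invariant and lets you kill all higher coefficients recursively, with no case split. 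What this buys you, beyond economy, is the sharper conclusion $n-m\in\{1,2\}$ as an immediate byproduct; the paper obtains that only in the subsequent lemma by a separate computation in $R/I$. The paper's approach, on the other hand, stays entirely within $R$ and makes the existence of the associating unit completely explicit (including the one-parameter freedom in $d$ when $l>3$), which your argument does not display as directly.
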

 \begin{proof}
 Let f and g be in R.
 We define f is associated to g if there exist unit $u\in R$ such that $f=ug.$
 For any $f\in R$, since $x^2=y^3$,  we can say $f(x,y)=x(h_1(y))+h_2(y)+a_0$ where  $a_0\in \mathbb{C}$ and 
$h_1(y)$ and $h_2(y)$ are power series in y with $deg(h_1(y))\geqslant 0,\;deg(h_2(y))\geqslant 1.$
 We may assume $a_0=0$ since $a_0\neq0$ implies that f is unit.
  Let $degh_1(y)=m\geqslant 0$ and $degh_2(y)=n\geqslant 1$.
 For the case $h_1(y)=0$, then $f=y^n(g_2(y))$ and $g_2(y)$ is unit.
 Thus f can be associated to $y^n.$
For the case $h_1(y)\neq 0$, we can rewrite $f=xy^m(g_1(y))+y^n(g_2(y))$ where $g_1(y)\;,g_2(y)$ are units. Since $g_2(y)$ is unit,  we can get $u(xy^m)+cy^n,m\geqslant 0, n\geqslant 1.$ for some unit u and constant $c\neq 0$ which is associated to f.  For given $a_i', b_i',c \in \mathbb{C}$ with $c\neq 0$ and $b_0'\neq 0$,  let us show there exist $a_i, b_i ,d\in \mathbb{C}\;, \;d\neq 0$ which satisfy the following equation
$$(\sum_{i=0}^{\infty}b_i'y^i+\sum_{i=0}^{\infty}a_i'xy^{i})x+cy^n=(\sum_{i=0}^{\infty}b_iy^i+\sum_{i=0}^{\infty}a_ixy^{i})(x+dy^n).$$  
After comparing coefficients for $xy^i, i\geqslant 0$  we have following equations
$$b_0'=b_0, \;i=0$$
$$b_i'=b_i,\; 1\leqslant i\leqslant n-1$$
$$b_i'=b_i+a_{i-n}d,\; i\geqslant n$$ 
and coefficients for $y^i, i\geqslant 1,\; a_l=d_l=b_i=0\; when\; l<0$
$$a_{n-3}'+c=a_{n-3}+db_0, i= n$$
$$a_{i-3}'=a_{i-3}+db_{i-n}, i\neq n$$
For n=1, for given unit u and nonzero constant c, let us show that $ux+cy$ is associated to $x+dy$ for some nonzero constant d. For given $a_i', b_i',c \in \mathbb{C}$ with $c\neq 0$ and $b_0'\neq 0$, 
let us show there exist $a_i, b_i ,d\in \mathbb{C}\; with \;d\neq 0$ with
$$(\sum_{i=0}^{\infty}b_i'y^i+\sum_{i=0}^{\infty}a_i'xy^{i})x+cy=(\sum_{i=0}^{\infty}b_iy^i+\sum_{i=0}^{\infty}a_ixy^{i})(x+dy).$$  
 The equations for coefficients of $xy^i, i\geqslant 0$ are 
$$b_0'=b_0\;, \;i=0$$
$$b_i'=b_i+da_{i-1}\;,\; i\geqslant 1$$
The coefficients for $y^i,\; i\geqslant 1$ of the equation are
$$c=db_0\;,i=1\;$$
$$0=db_1\;,i=2\;$$
$$a_{i-3}'=db_{i-1}+a_{i-3}, i\geqslant 3$$
We can show that there exist $a_i, b_i\; and\; d\in \mathbb{C}\; with\; d\neq 0$.\\ This infinite linear systems have a solution(exactly one solution.) Chasing it from $b_0'=b_0$. Note $b_0\neq 0$ since by assumption $b_0'\neq 0$. Since we have $c\neq 0$ by assumption and $c=db_0$, $d\neq 0$. In fact, $d=c/b_0.$ 
Since $0=db_1$ and $d\neq 0$ we have $b_1=0$. Hence the equation $b_1'=b_1+da_0$ implies $b_1'=da_0$. That is, $a_0=b_1'b_0/c.$
For this equation $a_0'=db_2+a_0$, we know $a_0', d, and a_0.$ Thus we can solve $b_2$ which is uniquely determined as $b_2=(a_0'-b_1'b_0/c)/(c/b_0)$. Similarly, $b_k$ determines $a_{k-1}$ and $a_{k-1}$ determines $b_{k+1}$ uniquely for $k\geqslant 2$. In such a way, we can find all $a_i$ and $b_i$ and d.\\
For n=2, coefficients for $xy^i, i\geqslant 0,\; a_l=b_l=0\; when\; l<0$
$$b_0'=b_0, \;i=0$$
$$b_i'=b_i,\; i= 1$$
$$b_i'=b_i+a_{i-2}d,\; i\geqslant 2$$ 
and coefficients for $y^i, i\geqslant 1,\; a_l=d_l=b_i=0\; when\; l<0$
$$c=db_0, i= 2$$
$$a_{i-3}'=a_{i-3}+db_{i-2}, i\geqslant 3$$
For $n=3,$ coefficients for $xy^i, i\geqslant 0,\; a_l=b_l=0\; when\; l<0$
$$b_0'=b_0, \;i=0$$
 $$b_i'=b_i,\; 0\leqslant  i \leqslant n-1$$
$$b_i'=b_i+a_{i-n}d,\; i\geqslant n$$ 
and coefficients for $y^i, i\geqslant 1,\; a_l=d_l=b_i=0\; when\; l<0$
$$a_{0}'+c=a_{0}+db_0, i= 3$$
$$a_{i-3}'=a_{i-3}+db_{i-n}, i\geqslant 3+1$$
For $n> 3,$  coefficients for $xy^i, i\geqslant 0,\; a_l=b_l=0\; when\; l<0$
$$b_0'=b_0, \;i=0$$
 $$b_i'=b_i,\; 0\leqslant  i \leqslant n-1$$
$$b_i'=b_i+a_{i-n}d,\; i\geqslant n$$ 
and coefficients for $y^i, i\geqslant 1,\; a_l=d_l=b_i=0\; when\; l<0$
$$a_{i-3}'=a_{i-3}\;, 3\leqslant i\leqslant n-1$$
$$a_{n-3}'+c=a_{n-3}+db_0, i= n$$
$$a_{i-3}'=a_{i-3}+db_{i-n}, i\geqslant n+1$$
 Let u be  $(\sum_{i=0}^{\infty}b_i'y^i+\sum_{i=0}^{\infty}{a_i}'xy^i)xy^m+cy^n$
 with $a_i', b_i', c\neq 0\; and\; b_0'\neq 0\in \mathbb{C}$. For given unit u and nonzero constant c, let us show that $uxy^m+cy^m$ is associated to $xy^m+dy^m$ for some nonzero constant d.
 Let u be  $(\sum_{i=0}^{\infty}b_i'y^i+\sum_{i=0}^{\infty}{a_i}'xy^i)xy^m+cy^n$
 with $a_i', b_i',c \in \mathbb{C}$ with $c\neq 0$ and $b_0'\neq 0$.
 Let us show that there exist  $a_i ,b_i ,\; d\in \mathbb{C}$ with $b_0\neq 0$ and $d\neq 0$ satisfying
 $$(\sum_{i=0}^{\infty}b_i'y^i+\sum_{i=0}^{\infty}{a_i}'xy^i)xy^m+cy^n
 =(\sum_{i=0}^{\infty}a_iy^i+\sum_{i=0}^{\infty}a_ixy^i)(xy^m+dy^n).$$
For $m\geqslant n$, $uxy^m+cy^n=(uxy^{m-n}+c)y^n$ implies that it is associated $y^n.$ 
For the case $m<n$, let $n=m+l.$ 
 For $l=1$, coefficients of $xy^{m+i}, i\geqslant 0$ are
$$b_0'=b_0$$
$$b_1'=b_1+a_0$$
$$b_2'=b_2+da_1$$
$$b_i'+a_{i-3}'=b_i+da_{i-1}\;,\; i\geqslant3$$ 
coefficients for $y^{m+1+i}, i\geqslant 0$ are
$$c=db_0$$
$$0=db_1$$
$$a_{i-2}'=a_{i-2}+db_{i}\;,\; i\geqslant 2.$$
Let us show there exist solution $a_i, b_i , d, d\neq 0$ for the linear infinite systems above. Chasing from $b_0=b_0'$.
$b_0=b_0'$ and $b_0\neq 0$ since $b_0'\neq 0$.
Then we get $d\neq 0$, then $b_1=0$. Then we can get $a_0$\;, $b_2$\;, $a_1$\;, $b_3$\;, $a_2$ and $b_4$ in a order. Thus we can find solution for all $a_i\;, \;b_i$ and d with $b_0\neq 0$ and $d\neq 0.$
For $l=2$, coefficients of $xy^{m+i}, i\geqslant 0$ are
$$b_0'=b_0$$
$$b_1'=b_1$$
$$b_2'=b_2+a_0$$
$$b_i'=b_i+da_{i-2}\;,\;i\geqslant 3$$
and coefficients of $y^{m+2+i}, i\;\geqslant 0$
$$c=db_0$$
$$a_{i-1}'=a_{i-1}+db_{i}\;,\; i\geqslant 1$$
For this infinite linear systems we can find exactly solution.
We can get $b_0, b_1\;,\;b_0 \neq0$ first. Then we can determine $d$ with $d\neq 0$. Then we can get $a_0$. The next  one we can determine is $b_2$. Then $a_1$ is followed. In other words, $b_{i+2}$ is followed by $a_i$ for $i\geqslant 0$. $a_i$ is followed by $b_{i+1}$ for $i\geqslant 1$ 
Hence, we can find all solution $a_i, b_i ,d$ where $b_0\neq 0\; and \;d\neq 0.$
 For $l=3$, the equation of coefficients of $xy^{m+i},i\geqslant 0$ are
$$b_i'=b_i,\; 0\leqslant i\leqslant 2$$
$$b_i'=b_i+da_{i-3}\;,\;i\geqslant 3$$
and comparing coefficients of $y^{m+3+i},\; i\geqslant 0$
$$a_0'+c=b_0d+a_0$$
$$a_i'=db_i+a_i\;,\; i\geqslant 1$$
For this infinite linear systems  we can find only one solution.
We can get $b_0, b_1, b_2\;,\;b_0 \neq0$ first. Then we can determine $d, a_0,a_1,a_2$ with $d\neq 0$. Then we can get $b_3, b_4 ,b_5$. The next we can determine is $a_3, a_4, a_5$. Then $b_6,b_7,b_9$ is followed. In this way we can find all solution $a_i, b_i ,d$ where $b_0\neq 0\; and \;d\neq 0.$
 For $l>3$ the equations for coefficients of $xy^{m+i}\;,\; i\geqslant 0$
$$b_i'=b_i, 0\leqslant i \leqslant l-1$$
$$b_i'=b_i +a_{i-l}, i\geqslant l$$ 
and the equations for coefficients of $y^{m+i+3}\;,\; i\geqslant 0$
$$a_i'=a_i,\; 0\leqslant i \leqslant l-4$$ 
$$a_i'+c=a_i+db_0, \;i=l-3$$
$$a_i'=a_i+db_{i-l+3}, i\geqslant l-2$$ 
  $b_i=b_i', 0\leqslant l-1, b_0\neq 0$ and $a_0,..., a_{l-4}$ are determined.  
 Then we can choose $d=t$ for all $t\in \mathbb{C}$. Then from $a_i'+c=a_i+db_0, \;i=l-3$, $a_i'+c=a_i+tb_0', \;i=l-3$ we determine $a_{l-3}$ since we already know $b_0$. 
 Then the equations $a_i'=a_i+db_{i-l+3},  l-2\leqslant i\leqslant2l-4$ and  $b_i, \;0\leqslant l-1$ tell us $a_i$ for $l-2\leqslant i\leqslant2l-4$. Then  
 $b_i'=b_i +a_{i-l},\; l\leqslant i\leqslant 3l-4 $ and $a_i$ for $l-2\leqslant i\leqslant2l-4$ determine $b_i\;,\; l\leqslant i\leqslant 3l-4 $. Then $a_i,\; 2l-3\leqslant i\leqslant 4l-7$ follow. Then $b_i,\; 3l-3\leqslant 3l-4$ do.  There are infinite solutions depending on the choice of  d.  We can always choose $d\neq 0.$ \end{proof}
Now let us consider the ideal in R.  A nonzero ideal in R contains non units since a maximal ideal is (x,y) which is an unique maximal ideal.  If $m\geqslant n$, $xy^m+ay^n=y^n(xy^{m-n}+a)$ which is associated to $y^n.$
\begin{lemma}
Let I be an ideal $(xy^m+ay^n)$ for $0\leqslant m<n, a\neq0$. Then  I is one of $(xy^m+ay^{m+1})$, $(xy^m+ay^{m+2})$ or $(xy^m)$. 
\end{lemma}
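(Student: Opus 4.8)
The plan is to exploit the defining relation $x^2 = y^3$ of $R$ to reduce the exponent $n$: each time $n$ exceeds $m$ by at least $3$ we can trade a factor $y^3$ for $x^2$ and peel off a unit, and the three normal forms in the statement are exactly the configurations in which this can no longer be done (together with the degenerate outcome $I = (xy^m)$).

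Concretely, I would first dispose of the case $n \geq m+3$. Writing $y^n = y^{n-3}\,y^3 = y^{n-3}x^2$ in $R$ gives
\[
xy^m + a y^n \;=\; xy^m + a\,y^{n-3}x^2 \;=\; x\,y^m\bigl(1 + a\,y^{\,n-3-m}x\bigr).
\]
Because $n-3-m \geq 0$, the term $a\,y^{\,n-3-m}x$ lies in the maximal ideal $(x,y)$, so the cofactor $1 + a\,y^{\,n-3-m}x$ is a unit in the complete local ring $R$. Hence the generator of $I$ is a unit multiple of $xy^m$, and $I = (xy^m)$ (which is $(x)$ when $m=0$). For the two remaining cases, $n = m+1$ gives $I = (xy^m + ay^{m+1})$ and $n = m+2$ gives $I = (xy^m + ay^{m+2})$, both already in the asserted list; since the hypothesis $m < n$ leaves only the possibilities $n = m+1$, $n = m+2$, or $n \geq m+3$, this covers everything.

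The only point that needs attention is the invertibility of the cofactor in the first case: it is the inequality $n - 3 - m \geq 0$ that guarantees $1 + a\,y^{\,n-3-m}x$ has nonzero constant term. When $n - m$ equals $1$ or $2$ the same substitution produces $xy^m + a\,y^{n-3}x^2$ with a non-unit cofactor and no simplification of the ideal, which is precisely why those two cases survive as separate normal forms rather than collapsing to $(xy^m)$. I do not anticipate any genuine obstacle here: all of the content is in the single identity $y^3 = x^2$, and the statement does not assert that the three listed ideals are pairwise distinct, only that $I$ is one of them.
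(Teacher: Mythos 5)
Your proof is correct, and it takes a genuinely different---and more direct---route than the paper's. The paper works in the quotient $R/I$: from $xy^m \equiv -ay^n$ it computes $x^2y^m$ in two ways to conclude that $y^{m+3}-a^2y^{2n-m}\in I$, hence $y^{\min\{m+3,\,2n-m\}}\in I$ (the case $m+3=2n-m$ being excluded by parity), and then runs a case analysis on how $n$ sits relative to $m+3$ and $2n-m$ to pin down the three normal forms. You instead rewrite the generator itself: for $n\geqslant m+3$ the identity $y^n=y^{n-3}x^2$ factors $xy^m+ay^n$ as $xy^m\bigl(1+ay^{\,n-3-m}x\bigr)$, a unit multiple of $xy^m$ in the complete local ring, so $I=(xy^m)$ outright, and the only remaining possibilities $n=m+1$ and $n=m+2$ are already on the list. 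Your argument is shorter and also sidesteps a small lacuna in the paper's case split: the boundary case $n=m+3$ satisfies neither of the stated sub-cases $m+3<n$ nor $n<m+3$, though it is easily absorbed. What the paper's longer computation buys in exchange is the auxiliary membership facts (such as $y^{m+2}\in(xy^m+ay^{m+1})$ and $y^{m+3}\in(xy^m+ay^{m+2})$) that are reused later in Theorem 3 and in the colength computations; your factorization delivers the classification cleanly but does not produce those by-products along the way.
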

\begin{proof}
Note that $xy^m=-ay^n$ in $R/I$ which implies $x^2y^m=-axy^n=-a(xy^m)y^{n-m}=-a(y^n)y^{n-m}=-ay^{2n-m}$ in $R/I$. Since $x^2=y^3$ and $x^2y^m=-ay^{2n-m}$ in $R/I$, $y^{m+3}+ay^{2n-m}\in I.$  Then $y^{min\{m+3,2n-m\}}\in I.$
For the case m+3=2n-m we have
2m+3=2n which is impossible.
If  $m+3<2n-m$, then we can consider two cases
$m+3<n<2n-m$ and $m<n<m+3<2n-m$. For $m+3<n<2n-m$,
$y^{m+3}\in I$, then $x(xy^m)y^{n-m-3}=y^n\in I$. Hence $I=(xy^m)$.  
For $m<n<m+3<2n-m$, we must have$n=m+2,$ thus $I=(xy^m+ay^{m+2}).$ 
If $2n-m<m+3$ we get
$n=m+1.$ Thus $I=(xy^m+ay^{m+1})$. 
\end{proof}

\begin{remark} Any ideal in R can be generated by  $xy^m+ay^{m+1},\;xy^m+ay^{m+2}$$(TypeII),\;y^m,\;x+ay^2,\;xy^m(TypeI)$ for $0\leqslant m\in \mathbb{N}$. 
\end{remark}

\begin{theorem}
 An ideal in R is one of ideals \\
 (1) $(y^m), (x)(=(x,y^3)) , (xy^{m}), (xy^m,y^{m+1}),  (xy^m,y^{m+2}), m\geqslant 0, or$\\
 (2)$(xy^m+ay^{m+1})$($(xy^m+ay^{m+2})$) for $m\geqslant 0$.  
\end{theorem}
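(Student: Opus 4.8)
The plan is to combine the two preceding lemmas with a finite‑generation argument. Since $\mathbb{C}[[x,y]]$ is Noetherian, so is its quotient $R$, so every ideal is finitely generated, say $I=(g_1,\dots,g_r)$. If $I=0$ there is nothing to prove, and if some $g_i$ is a unit then $I=R=(y^0)$; so assume $I$ is proper and nonzero and each $g_i$ is a non‑unit. Replacing each $g_i$ by a unit multiple does not change $I$, so by the lemma that associates every non‑unit of $R$ to a standard form we may assume each $g_i$ is exactly one of $y^{s}$ $(s\ge 1)$, $xy^{s}$ $(s\ge 0)$, or $xy^{m}+a y^{n}$ $(0\le m<n,\ a\ne0)$; and by the lemma on the principal ideals $(xy^m+ay^n)$ we may further take each generator of the last kind to be $xy^m+ay^{m+1}$ or $xy^m+ay^{m+2}$, the remaining possibility $(xy^m+ay^n)=(xy^m)$ being absorbed into the second kind.

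Next I would cut the number of generators down. Call $g_i$ an $x$\emph{-generator} if it involves $x$ (second or third kind), and record its $x$-degree $m_i$. If $g_1$ is an $x$-generator of minimal $x$-degree, then for every other $x$-generator $g_i$ the element $g_i-y^{m_i-m_1}g_1$ lies in $I$, the $x$-terms cancel, and the difference of the $y$-parts works out in every case to be $0$, $c\,y^{j}$, or $y^{j}(a-a'y)$ with $a-a'y$ a unit of $R$; so, rescaling by a unit if necessary, we may replace $g_i$ by a pure power of $y$. Hence $I$ is generated by at most one $x$-generator $g_0$ together with some pure powers of $y$, and $(y^{j_1},\dots,y^{j_p})=(y^{\min j_l})$. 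Therefore $I=(y^s)$, or $I=(g_0)$, or $I=(g_0,y^s)$ with $g_0\in\{xy^m,\ xy^m+ay^{m+1},\ xy^m+ay^{m+2}\}$.

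The cases $I=(y^s)$ and $I=(g_0)$ are already on the list (note $(x)=(x,y^3)$). For the mixed ideals $I=(g_0,y^s)$ I would use the relation $x^2=y^3$ repeatedly, just as in the proof of the lemma on principal ideals. If $g_0=xy^m$: for $s\le m$ one has $xy^m\in(y^s)$ so $I=(y^s)$; for $s\in\{m+1,m+2\}$ one gets $(xy^m,y^{m+1})$ or $(xy^m,y^{m+2})$, on the list; for $s\ge m+3$ one has $y^{m+3}=x\cdot xy^m\in(xy^m)$ so $I=(xy^m)$. If $g_0=xy^m+ay^{m+1}$: for $s\le m+1$, $y^{m+1}\in(y^s)$ forces $xy^m\in I$ and we return to the previous case; for $s\ge m+2$ the identity $(x-ay)(xy^m+ay^{m+1})=y^{m+2}(y-a^2)$, together with the fact that $y-a^2$ is a unit, gives $y^{m+2}\in(xy^m+ay^{m+1})$, hence $y^s$ too, so $I=(xy^m+ay^{m+1})$. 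If $g_0=xy^m+ay^{m+2}$: for $s\le m+2$ we reduce to $(xy^m,y^s)$ as before; for $s\ge m+3$ the identity $(x-ay^2)(xy^m+ay^{m+2})=y^{m+3}(1-a^2y)$ with $1-a^2y$ a unit gives $y^{m+3}\in(xy^m+ay^{m+2})$, so $I=(xy^m+ay^{m+2})$. In every case $I$ appears in the list.

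The main obstacle is not any single deep step but the bookkeeping: first, making sure the reduction of generators really terminates with a generating set of the restricted shape "one $x$-generator plus one power of $y$"; and second, in the mixed case, recognizing that the elements $y-a^{2}$, $1-a^{2}y$ (and $a-a'y$ in the reduction step) have nonzero constant term and are therefore units of the local ring $R$ — this is exactly what makes the higher powers of $y$ collapse into the principal ideal generated by $g_0$ and keeps the list finite.
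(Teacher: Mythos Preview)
Your argument is correct. Both you and the paper proceed by reducing an arbitrary finite generating set to the standard forms supplied by the two preceding lemmas and then doing a case analysis, so the overall strategy is the same; but the organization differs. The paper's proof is an exhaustive pairwise case split: it treats ideals generated by two (then three, then ``four or more'') elements of type~I, then one of type~I together with one of type~II, then two of type~II, checking each combination by hand. Your reduction is more structural: a single Gaussian--elimination step (subtract $y^{m_i-m_1}g_1$ from every other $x$-generator) forces any generating set down to at most one $x$-generator $g_0$ plus a single power $y^s$, after which the three subcases for $g_0$ are dispatched uniformly using the unit identities $(x-ay)(xy^m+ay^{m+1})=y^{m+2}(y-a^2)$ and $(x-ay^2)(xy^m+ay^{m+2})=y^{m+3}(1-a^2y)$. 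This buys you a cleaner proof that visibly handles an arbitrary number of generators, whereas the paper's ``4 or more elements \dots\ is going to be the case we have done above'' is asserted rather than argued. One small point: your phrase ``$y^{j}(a-a'y)$'' in the reduction step should really be ``$y^{j}(a-a'y^{k})$'' for $k\in\{0,1\}$ depending on which of $m_i+1,m_i+2$ and $m_1+1,m_1+2$ occur, but since any such element with nonzero constant term is a unit in the local ring $R$, the conclusion is unaffected.
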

\begin{proof} 
(1)Let us consider ideals generated by  two elements of type I.
Let  the ideal I be $(xy^m,y^n).$  If $n\leqslant m$,  $I=(y^n)$.  If $n\geqslant m+3$, then $xy^m\in I$ and $x^2y^m=y^{m+3}$ in R. Thus $I=(xy^m).$  Otherwise,  I=$(xy^m,y^n)$ for $n=m+1\; or\; n=m+2.$ 
Let I be $(x,xy^m)$ and then it is same as (x). Let us consider ideals generated by three elements of type I. Let I be $(y^m,xy^n,x)$ which is same as $I=(y^m,x)$. This case have been done above. we make an ideal generated by 4 or more elements of type I is going to be the case we have done above.\\
(2)Let us consider ideals generated by an element of type I and an element of type II.
Note that $y^{m+2}\in (xy^m+ay^{m+1})$ since $xy^m=-ay^{m+1}\in R/I$ and then $y^{m+3}=x^2y^m=-axy^{m+1}=-a^2y^{m+2}\in R/I$ which implies $y^{m+2}\in I$. If $I=(xy^m+ay^{m+1},y^{m_1}), m_1\leqslant m+1$, then $I=(xy^m, y^{m_1})$. For  $m_1>m+1$,  then $I=(xy^m+ay^{m+1})$ since $y^{m+2}\in I$ implies that $y^{m_1}\in I.$  If $I=(xy^m+ay^{m+1}, x)$, then $I=(x,y^{m+1})$. If $I=(xy^m+ay^{m+1},xy^{m_1})$, then for $m_1\leqslant m$, $I=(xy^{m_1},y^{m+1})$. If $m_1>m$, then $y^{m+2}\in I$ implies $xy^{m_1}\in (xy^m+ay^{m+1})$ and thus $I=(xy^m+ay^{m+1})$.  Note that  if $y^{m+3}\in(xy^m+ay^{m+2})$.  Similarly, we can show $(xy^m+ay^{m+2}, y^{m_1})$ is same as one of ideals $(xy^m, y^{m+1}) or (xy^m+ay^{m+2})$.  Let $I$ be $(xy^m+ay^{m+2}, xy^{m_1})$. Then  the ideal $I$ is same as one of $(xy^{m_1}, y^{m+2}) or (xy^m+ay^{m+2})$. If  $I=(xy^m+ay^{m+1}, f, g)$  f,g are type I, then we can easily check  the ideal is same as the one we have already done.\\
(3) Let us I be generated by elements of type II.
 Let $I$ be $(xy^m+ay^{m+1}, xy^{m_1}+by^{m_1+1})$ with $m\leqslant m_1.$  If $a=b$, then $ I=(xy^m+ay^{m+1})$.  If $a\neq b$, then $-by^{m_1+1}=xy^{m_1}=xy^my^{m_1-m}=-ay^{m+1}y^{m_1-m}=-ay^{m_1+1}\in R/I$. That implies $-ay^{m_1+1}+by^{m_1+1}\in I$. Hence $y^{m_1+1}\in I$. Thus $I=(xy^m+ay^{m+1}, xy^{m_1},y^{m_1+1})$ which we already done in (2). Let I be $(xy^m+ay^{m+1},xy^{m_1}+by^{m_1+2})$ with $m\leqslant m_1$,  then $y^{m_1+2}\in I$ since $y^{m+2}\in (xy^m+ay^{m+1})$. Thus $I=(xy^m+ay^{m+1},xy^{m_1},y^{m_1+2})$ which we already done. If $m_1< m$, then $-ay^{m+1}=xy^m=xy^{m_1}y^{m-m_1}=-ay^{m_1+2}y^{m-m_1}=-ay^{m+2}\in R/I$ implies  $y^{m+1}\in I$. That shows $ I=(xy^m,y^{m+1}, xy^{m_1}+ay^{m_1+1})$ which we already done in (2). Let $I$ be $(xy^m+ay^{m+2}, xy^{m_1}+by^{m_1+2})$ with $m\leqslant m_1$.  If $a=b,$ then $I=(xy^m+ay^{m+2}).$ If $a\neq b,\; and\; m=m_1$, then $I=(xy^m, y^{m+2}).$ If $m<m_1$, then $I=(xy^m+ay^{m+2}).$
 \end{proof}
            Let us count the colength of ideal defined by $dim_{\mathbb C}(R/I)$. Denote $\mathbb{N}^0=\mathbb{N}\cup\{0\}$
For computing the colength of ideals let us define partial ordered set $\{(m,n)| m,n \in \mathbb{N}^0, n>0\}$  Define $(m_1,n_1)\geqslant (m_2,n_2)$ of length 1 if $m_1+n_1-1=m_2+n_2,\;m_2\leqslant m_1$ and $n_2\leqslant n_1$.
Let us define the length of $(m,n)>(l,s)$ as $m+n-(l+s)-1.$
We want to look at the ideals. Thus we need some inequalities explaining the properties of the ideals.
Define the length of $I\subset J$ as the $max\{n|I_0=I\subset I_1\subset I_2......\subset I_{n-1}\subset I_n=J\}$ with strict inclusion. Let I be an ideal $(xy^m,y^{m+1})((xy^m,y^{m+2}))$ and J be an ideal $(xy^{m_1},y^{m_2})((xy^{m_1},y^{m_2}))$ where $m\geqslant 0$. We want I and J to have two minimal generators.
 Then we may assume $m_1<m_2$ and $m_1+3 >m_2$.If  $m_1\geqslant m_2$,  then $J=(y^{m_2})$.  Also if $m_1+3\leqslant m_2$, then $J=(xy^{m_1})$ since  $x(xy^{m_1})\in J$ implies that $y^{m_1+3}\in J$. If we have the conditions $m_1\leqslant m$, $m_2\leqslant m+1(m_2\leqslant m+2)$ and $m_1+m_2=m+(m+1)-1(m_1+m_2=m+(m+2)-1)$, then obviously $I\subset J$ and ,in particular, the third condition,$m_1+m_2=m+(m+1)-1(m_1+m_2=m+(m+2)-1)$, gives us the length of $I\subset J$ is 1 we are going to prove in the Lemma 4.  Let  $(m,m+1)>(m_1,m_2)$ be partial ordered pairs of length one with inequalities, $m_1\leqslant m, m_2\leqslant m+1,\; m_1+3>m_2,\; m_1+m_2=2m\; and \;m_1<m_2$. Then $(m_1,m_2)=(m-1, m+1)$  Let $(m,m+2)>(m_1,m_2)$ be partial ordered pairs with inequalities, $3+m_1>m_2,\; m_1<m,\;m_1\leqslant m+2,\;m_1<m_2, \;and\; m_1+m_2=2m+1$. Then $(m_1,m_2)=(m, m+1).$ It is possible that both partial ordered pairs stop at (0,1) which is correspondent to (x,y). The length of the pair $(m,m+1)$ will be 2m since we have $(m,m+1)>.......>(0,1)$ which can be counted by  2m+1-1. The length of $(m,m+2)>......>(0,1)$ is 2m+2-1. That showed $(xy^m,y^{m+1})$ is colength 2m+1 and $(xy^m,y^{m+2})$ is colength 2m+2. Thus $(xy^m+ay^{m+1})$ is colength of 2m+2 since $(xy^m+ay^{m+1})\subseteq(xy^m,y^{m+1})$ is one and $(xy^m+ay^{m+2})$ is colength of 2m+3 since $(xy^m+ay^{m+2})\subseteq(xy^m,y^{m+2})$ is one.(there are no proper ideals between these ideals)  Finally, let us show that we have partial ordered set of length one, then that implies as an ideal is also length one.
\begin{remark}
(1)$(xy^m,y^{m+1})\subset (xy^{m-1},y^{m+1})((xy^m,y^{m+2})\subset (xy^{m},y^{m+1}))$ is length one.
 (2)$(xy^m+ay^{m+1})\subset (xy^m, y^{m+1})((xy^m+ay^{m+2})\subset(xy^m,y^{m+2})), a\neq0, m\geqslant 0$ is length one.  
(3) $(y^m)\subset(xy^{m-1}, y^m )$ is length one, $(xy^m)\subset(xy^m,y^{m+2})$ is length one and $(x)\subset(x,y^2)$ is length one.
\end{remark} 
\begin{Cor}
For nonzeros $a\neq b\in \mathbb{C}$, $(xy^m+ay^{m+1})\neq (xy^m+by^{m+1})$
\end{Cor}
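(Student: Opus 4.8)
The plan is a short proof by contradiction, using only the colength computation established in the discussion just above. Suppose, for nonzero $a\neq b$ in $\mathbb{C}$, that $(xy^m+ay^{m+1}) = (xy^m+by^{m+1})$, and denote this common ideal by $I$. Then both of the listed generators belong to $I$, so their difference $(a-b)y^{m+1}$ belongs to $I$; since $\mathbb{C}$ is a field and $a\neq b$, the scalar $a-b$ is invertible, whence $y^{m+1}\in I$. Subtracting $ay^{m+1}$ from the generator $xy^m+ay^{m+1}$ then shows $xy^m\in I$ as well, so $(xy^m,y^{m+1})\subseteq I$.

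On the other hand the reverse inclusion $I=(xy^m+ay^{m+1})\subseteq(xy^m,y^{m+1})$ is immediate, since the single generator of $I$ is a $\mathbb{C}$-linear combination of $xy^m$ and $y^{m+1}$. Combining the two inclusions forces $I=(xy^m,y^{m+1})$. But this contradicts the colengths recorded above: $\dim_{\mathbb{C}}R/(xy^m+ay^{m+1})=2m+2$, while $\dim_{\mathbb{C}}R/(xy^m,y^{m+1})=2m+1$; equivalently, the length-one statement established above says that $(xy^m+ay^{m+1})\subset(xy^m,y^{m+1})$ is a proper inclusion. This contradiction proves $(xy^m+ay^{m+1})\neq(xy^m+by^{m+1})$.

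There is no genuine obstacle here: the only ingredient beyond elementary manipulation of generators is the colength (equivalently the length-one) statement for the inclusion $(xy^m+ay^{m+1})\subset(xy^m,y^{m+1})$, which has already been proved. For completeness I would also note that the identical argument — now producing $y^{m+2}\in I$ and forcing $I=(xy^m,y^{m+2})$ against the colength discrepancy $2m+3$ versus $2m+2$ — shows $(xy^m+ay^{m+2})\neq(xy^m+by^{m+2})$ for $a\neq b$, so the conclusion extends verbatim to the other family of Type II ideals.
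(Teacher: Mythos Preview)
Your proof is correct and follows essentially the same route as the paper: assume equality, obtain $y^{m+1}\in I$ (hence $xy^m\in I$), conclude $I=(xy^m,y^{m+1})$, and derive a contradiction from the colength discrepancy $2m+2$ versus $2m+1$. Your write-up is in fact more explicit than the paper's, which jumps directly to ``$xy^m\in I$'' without spelling out the difference-of-generators step, and your extension to the $(xy^m+ay^{m+2})$ family matches the paper's concluding remark.
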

Suppose not, let $I=(xy^m+ay^{m+1})= (xy^m+by^{m+1})$.
Then $xy^m\in I$ which implies $I=(xy^m,y^{m+1})$. However we already showed $(xy^m,y^{m+1})$ is colength 2m+1 which is different from the colength of  $(xy^m+ay^{m+1}).$ Similarly, we can show for $(xy^m+ay^{m+i}) m\geqslant 0, i=0 ,i=1.$ That is for a different nonzero a, we have a different ideal. 
\begin{Cor}
Every ideal in R can have at most 2 generators with the colegth as listed below
$$\left.\begin{array}{c|c}colength & ideal \\\hline 2m+2 & (xy^m+ay^{m+1}),(xy^{m},y^{m+2}),(y^{m+1})\\\hline2m+3 & (xy^m+ay^{m+2}),(xy^{m}),(xy^{m+1},y^{m+2})\\ \end{array}\right.
$$ for $m\geqslant 0$.
\end{Cor}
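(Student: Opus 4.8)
The plan is straightforward, since the heavy lifting has already been done in the classification of ideals of $R$ (the preceding Theorem). The bound on the number of generators is immediate from that classification: every ideal of $R$ equals one of $(y^m)$, $(x)$, $(xy^m)$, $(xy^m,y^{m+1})$, $(xy^m,y^{m+2})$, $(xy^m+ay^{m+1})$ or $(xy^m+ay^{m+2})$, and each of these is visibly generated by at most two elements. So the content of the corollary is the colength computation, and the remaining task is to compute $\dim_{\mathbb C}(R/I)$ for each of the six families and then sort the answers into the two rows of the displayed table, after the obvious reindexing (for instance $(xy^m,y^{m+1})$ shows up as the entry $(xy^{m'+1},y^{m'+2})$ with $m'=m-1$; the case $m=0$ is the maximal ideal $(x,y)$, of colength $1$, the single proper ideal not appearing in the table).

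First I would record the $\mathbb C$-basis of $R$: because $x^2=y^3$, any monomial $x^iy^j$ with $i\ge 2$ reduces to $x^{i-2}y^{j+3}$, so iterating shows $\{1,y,y^2,\dots\}\cup\{x,xy,xy^2,\dots\}$ is a $\mathbb C$-basis of $R$. For an ideal $I$ generated by monomials from this list, $I$ is spanned by a subset of the basis, and the only nonobvious membership to note is $x\cdot xy^m=y^{m+3}\in(xy^m)$; reading off the complementary basis of $R/I$ then gives at once $\dim_{\mathbb C}R/(y^{m+1})=2m+2$, $\dim_{\mathbb C}R/(xy^m,y^{m+1})=2m+1$, $\dim_{\mathbb C}R/(xy^m,y^{m+2})=2m+2$, $\dim_{\mathbb C}R/(xy^m)=2m+3$, and hence $\dim_{\mathbb C}R/(xy^{m+1},y^{m+2})=2m+3$.

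The two families with non-monomial generators are where the one genuine idea is needed, and it is exactly the computation already used in the Lemma about ideals $(xy^m+ay^n)$. In $R/(xy^m+ay^{m+1})$ one has $xy^m\equiv-ay^{m+1}$, so $y^{m+3}=x^2y^m\equiv-a\,xy^{m+1}=-a(xy^m)y\equiv a^2y^{m+2}$; since $a\neq 0$ the element $y-a^2$ has nonzero constant term and is therefore a unit in the local ring $R$, which forces $y^{m+2}\in(xy^m+ay^{m+1})$. Thus $(xy^m+ay^{m+1})=(xy^m+ay^{m+1},\,y^{m+2})$; passing to $R/(y^{m+2})$, which has dimension $2m+4$, and killing the ideal generated by $v=xy^m+ay^{m+1}$ — which there is spanned by the two elements $v$ and $yv=xy^{m+1}$ — yields $\dim_{\mathbb C}R/(xy^m+ay^{m+1})=2m+2$. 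The identical argument applied to $xy^m+ay^{m+2}$ puts $y^{m+3}$ in the ideal and then, inside $R/(y^{m+3})$ of dimension $2m+6$, leaves an ideal spanned by $v$, $xy^{m+1}$, $xy^{m+2}$, so $\dim_{\mathbb C}R/(xy^m+ay^{m+2})=2m+3$. Collecting the six values and grouping them as colength $2m+2$ and colength $2m+3$ reproduces the table.

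The main obstacle — really the only step with content beyond bookkeeping — is the last paragraph: one must see that a non-monomial generator $xy^m+ay^{m+1}$ (resp.\ $xy^m+ay^{m+2}$) automatically pulls $y^{m+2}$ (resp.\ $y^{m+3}$) into the ideal, which is precisely where the hypothesis $a\neq 0$ is used, after which everything collapses to counting monomials modulo a monomial-generated ideal. The rest is enumerating the six families coming from the classification theorem and a short reindexing to align the colengths with the two displayed rows.
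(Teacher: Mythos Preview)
Your proof is correct. The classification theorem immediately gives the two-generator bound, and your colength computations are all valid; in particular, the check that in $R/(y^{m+2})$ the ideal generated by $v=xy^m+ay^{m+1}$ is spanned by $v$ and $yv$ (since $xv=y^{m+3}+axy^{m+1}=a\,yv$ there) is the right thing to verify, and the analogous count for $xy^m+ay^{m+2}$ is fine.

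Your approach differs from the paper's. The paper does not count monomials directly; instead it sets up a partial order on pairs $(m,n)$ corresponding to the monomial ideals $(xy^m,y^n)$, argues that an inclusion $I\subset J$ of ``length one'' corresponds to $\dim_{\mathbb C}J/I=1$, and then computes colengths by counting the length of a maximal chain down to $(x,y)$. The non-monomial colengths are then read off from the length-one inclusions $(xy^m+ay^{m+1})\subset(xy^m,y^{m+1})$ and $(xy^m+ay^{m+2})\subset(xy^m,y^{m+2})$. Your route is more elementary and self-contained: once the $\mathbb C$-basis $\{y^j,\,xy^j:j\ge 0\}$ of $R$ is written down, all the monomial colengths are immediate, and the one substantive step---forcing $y^{m+2}$ (resp.\ $y^{m+3}$) into the non-monomial ideal via $a\neq 0$ being a unit---is exactly the computation the paper records in the remark following the corollary. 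The paper's chain argument is organized to dovetail with the later limit computations in Lemma~18, but for the bare colength statement your direct count is shorter.
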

 Note that $I_a=(xy^m+ay^{m+1})$ contains $y^{m+2}$ and $xy^{m+1}\in I$. Since $x^2y^m+axy^{m+1}\in I$,  $y^{m+3}+axy^{m+1}=y^{m+3}+axy^my=y^{m+3}+a(-ay^{m+1})y=0$ in $R/I_a.$ Thus we have $y^{m+3}-a^2y^{m+2}\in I$ which implies  $y^{m+2}\in I_a.$  Since $x^2y^m+axy^{m+1}=y^{m+3}+axy^{m+1}\in I_a$ and $xy^{m+2}+ay^{m+3}\in I$, \; $xy^{m+1}\in I_a$.
Similarly,  note that $I_a=(xy^m+ay^{m+2})$ contains $y^{m+3}$ and $xy^{m+1}.$
Since $x^2y^m+axy^{m+2}\in I_a$\; $x^2=y^3$ and $xy^m+ay^{m+2}=0$ in $R/I_a,$
$x^2y^m+axy^{m+2}=y^{m+3}-a^2y^{m+4}=0$ in $R/I_a$. Therefore
$y^{m+3}\in I_a.$\; Since $y^{m+3}+axy^{m+2}\in I$ and $xy^{m+1}+ay^{m+3}\in I$, \; $xy^{m+1}\in I_a.$

\begin{lemma}\begin{align}
&lim_{a\rightarrow 0}(xy^{m}+ay^{m+1})=(xy^{m},y^{m+2})\notag\\
&lim_{a\rightarrow 0}(xy^{m}+ay^{m+2})=(xy^{m})\; same\; as\; (xy^m,y^{m+3})\notag\\
&lim_{a\rightarrow \infty}(xy^{m}+ay^{m+1})=(y^{m+1})\notag\\
&lim_{a\rightarrow \infty}(xy^{m}+ay^{m+2})=(xy^{m+1},y^{m+2})\notag
\end{align}\end{lemma}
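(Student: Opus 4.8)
\subsection*{Proof proposal}

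The plan is to realise each of the four one-parameter families of ideals as the restriction to $\mathbb{A}^{1}\setminus\{0\}$ of a \emph{single flat family over the whole line} $\mathbb{A}^{1}$, whose fibre over the origin is then, by definition, the asserted limit. Throughout write $R=\mathbb{C}[[x,y]]/(x^2-y^3)$, fix $m\geqslant 0$, and recall from the discussion preceding the lemma that $(xy^m+ay^{m+1})$, $(xy^m,y^{m+2})$, $(y^{m+1})$ all have colength $2m+2$, while $(xy^m+ay^{m+2})$, $(xy^m)=(xy^m,y^{m+3})$, $(xy^{m+1},y^{m+2})$ all have colength $2m+3$; recall also that for $a\neq 0$ the ideal $(xy^m+ay^{m+1})$ contains $y^{m+2}$ (since $y^{m+2}(y-a^2)\in(xy^m+ay^{m+1})$ and $y-a^2$ is a unit of $R$) and that $(xy^m+ay^{m+2})$ contains $xy^{m+1}$.

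For $\lim_{a\to 0}(xy^m+ay^{m+1})$ I would take the ideal $\mathcal{J}=(xy^m+ay^{m+1},\,y^{m+2})$ of $R\otimes_{\mathbb{C}}\mathbb{C}[a]$, defining a closed subscheme $Z\subset \operatorname{Spec}(R)\times\mathbb{A}^{1}_{a}$, and check that $Z$ has constant fibre colength $2m+2$: over $a=a_{0}\neq 0$ the fibre ideal is $(xy^m+a_{0}y^{m+1},y^{m+2})=(xy^m+a_{0}y^{m+1})$ (the second generator being redundant there), and over $a_{0}=0$ it is $(xy^m,y^{m+2})$, both of colength $2m+2$. Since $\mathbb{A}^{1}$ is reduced, a coherent sheaf of constant fibre dimension is locally free, so $Z$ is flat over $\mathbb{A}^{1}$; its restriction to $\mathbb{A}^{1}\setminus\{0\}$ is the family $a\mapsto[R/(xy^m+ay^{m+1})]$, and since the punctual Hilbert scheme $\text{Hilb}^{0}_{2m+2}(R)$ is a separated scheme, a flat extension across $a=0$ is unique; hence $\lim_{a\to 0}(xy^m+ay^{m+1})$ is the central fibre $(xy^m,y^{m+2})$. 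The other three limits follow by the identical mechanism, the only work being to pick generators making all fibres over $\mathbb{A}^{1}$ have the common colength: for $\lim_{a\to\infty}(xy^m+ay^{m+1})$ set $b=1/a$ and use $(bxy^m+y^{m+1})$ over $\mathbb{A}^{1}_{b}$, whose fibre over $b_{0}\neq 0$ is an associate of $(xy^m+b_{0}^{-1}y^{m+1})$ and over $b_{0}=0$ is $(y^{m+1})$, all of colength $2m+2$, so the family is already flat and the limit is $(y^{m+1})$; for $\lim_{a\to 0}(xy^m+ay^{m+2})$ the family $(xy^m+ay^{m+2})$ over $\mathbb{A}^{1}_{a}$ works directly, its fibre over $a_{0}\neq 0$ and over $0$ (namely $(xy^m)$) both having colength $2m+3$; and for $\lim_{a\to\infty}(xy^m+ay^{m+2})$ set $b=1/a$ and take $(bxy^m+y^{m+2},\,xy^{m+1})$ over $\mathbb{A}^{1}_{b}$, whose fibre over $b_{0}\neq 0$ is $(b_{0}xy^m+y^{m+2})$ (the generator $xy^{m+1}$ being redundant) and over $b_{0}=0$ is $(xy^{m+1},y^{m+2})$, both of colength $2m+3$, giving the limit $(xy^{m+1},y^{m+2})$.

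The only conceptual ingredients are the passage from ``coherent family of locally constant fibre length over a reduced base'' to ``flat'', together with separatedness of the punctual Hilbert scheme, which identifies the flat limit with the central fibre of the explicit family. The step I expect to be the main obstacle — though it is a careful computation rather than a difficulty of principle — is verifying, in the two cases requiring an enlargement (the limits $a\to 0$ of $(xy^m+ay^{m+1})$ and $a\to\infty$ of $(xy^m+ay^{m+2})$), that the enlarged ideal has central fibre of colength exactly $2m+2$, respectively $2m+3$, i.e. that neither too few nor too many generators have been added; this is where the explicit monomial bases of $R/(xy^m,y^{m+2})$ and of $R/(xy^{m+1},y^{m+2})$ (obtained using $x^2=y^3$) are used.
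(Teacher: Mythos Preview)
Your proof is correct and takes a genuinely different route from the paper's. The paper argues as follows: by properness of the Hilbert scheme the limit exists and has the same colength; since the generator $xy^m+ay^{m+1}$ specialises to $xy^m$ the limit ideal must contain $xy^m$; and then the classification of colength-$(2m+2)$ ideals supported at the origin (Theorem~17 and Corollary~3 in the paper) singles out $(xy^m,y^{m+2})$ as the only possibility. The remaining limits are dispatched by the same existence--containment--classification pattern.

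Your approach instead manufactures the flat extension explicitly by adjoining the ``hidden'' generator that becomes necessary in the limit ($y^{m+2}$, respectively $xy^{m+1}$), checks constant fibre length, and invokes flatness over a reduced base plus separatedness. This is more constructive: you never appeal to the full classification of ideals, only to the colength of the specific central fibre, and you never need properness of the Hilbert scheme since the extension is exhibited rather than inferred. The paper's argument is shorter because the classification is already in hand, but it leaves the step ``the limit contains $xy^m$'' slightly informal (it is an upper-semicontinuity statement), whereas your version sidesteps this entirely. Your explicit flat families would also be the natural starting point if one later wanted to compute the scheme structure of $\text{Hilb}^0_m$ near the limit points.
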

\begin{proof}
$lim_{a\rightarrow 0}(xy^m+ay^{m+1})$ contains $xy^m$ and since Hilbert scheme is complete, the limit is same as the colength of $(xy^m+ay^{m+1})$. Thus Thm 3 and from lemma 4 implies that the limit is an ideal $(xy^m, y^{m+2}).$ Let $b=1/a$ and then  $lim_{a\rightarrow \infty}(xy^m+ay^{m+1})=lim_{b\rightarrow 0}(bxy^m+y^{m+1})$ which contains $y^{m+1}$. Thus the limit is the ideal $(y^{m+1}).$
Similarly,  we can get the limits for the other cases.
\end{proof}
\subsection {Puctual \text{Hilb}ert scheme of points on the cusp curve is isomorphic to $P^1$ as a scheme} 

Let $R$ be $\mathbb{C}[[x,y]]/(x^2-y^3)$ and  $\x=Spec R.$  The tangent space at the point $I\in Hilb_m(R)$ denoted by $T_IHilb_m(R)$ is known to be same as $Hom_R(I,R/I)$ as a $\mathbb{C}$ module and also let us remark $Hilb_m(\mathbb{C}[x,y]/(x^2-y^3))_{(x,y)})$ is same as $Hilb_m(R).$

 \begin{theorem}
$\text{Hilb}^0_m(X)$ is isomorphic to $\mathbb{P}^1$ as a subscheme of $\text{Hilb}_m(X).$
\end{theorem}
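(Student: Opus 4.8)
The plan is to exhibit $\mathrm{Hilb}^0_m(X)$ explicitly as the image of a closed immersion from $\mathbb{P}^1$ and then show that immersion is surjective. Write $m=2k+2$ (the case $m=2k+3$ is entirely parallel: the moving generator $y^{k+1}$ is replaced by $y^{k+2}$ and one uses the auxiliary monomial generators $y^{k+3}$ and $xy^{k+1}$). By the classification of ideals and colengths of Section 3.1 together with the limit computations of the last Lemma there, the closed points of $\mathrm{Hilb}^0_m(X)$ are exactly $I_a=(xy^k+a\,y^{k+1})=(xy^k+a\,y^{k+1},\,y^{k+2})$ for $a\in\mathbb{C}$ and $I_\infty=(y^{k+1})$. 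First I would assemble these into a flat family over $\mathbb{P}^1=\mathrm{Proj}\,\mathbb{C}[s,t]$: let $\mathcal{I}\subset\mathcal{O}_X\boxtimes\mathcal{O}_{\mathbb{P}^1}$ be generated by $s\,xy^k+t\,y^{k+1}$ and by $y^{k+2}$. The one genuine computation is that $(\mathcal{O}_X\boxtimes\mathcal{O}_{\mathbb{P}^1})/\mathcal{I}$ is locally free of rank $m$ over $\mathbb{P}^1$, with fiberwise basis $\{1,y,\dots,y^{k+1},x,xy,\dots,xy^{k-1}\}$; one checks this on the two standard charts $a=t/s$ and $b=s/t$ using the relations $xy^k\equiv -a\,y^{k+1}$, $y^{k+2}\equiv 0$, $x^2\equiv y^3$ (which force $xy^{k+j}\equiv 0$ and $y^{k+1+j}\equiv 0$ for $j\ge 1$), plus the remark that a finitely generated module over the principal ideal domain $\mathbb{C}[a]$ with all fiber dimensions equal to the number of generators is free. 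Since every fiber of this family is a length-$m$ subscheme supported at the cusp, Hilbert--Chow carries the whole family to the single reduced point $m\cdot[\text{cusp}]$, and as $\mathbb{P}^1$ is reduced and $\mathrm{Sym}^m(X)$ is separated this factors to give a morphism $\Phi\colon\mathbb{P}^1\to\mathrm{Hilb}^0_m(X)$.

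Next I would prove $\Phi$ is a closed immersion. It is proper (source proper, $\mathrm{Hilb}_m(X)$ separated) and radicial: injectivity on closed points is exactly the Corollary of Section 3.1 that distinct $a$ give distinct ideals $(xy^k+a\,y^{k+1})$, together with the fact that $I_0$, $I_\infty$ and the $I_a$ ($a\neq 0,\infty$) are pairwise distinct, and all residue fields are $\mathbb{C}$. For unramifiedness one checks $d\Phi$ is injective at every closed point. On the $a$-chart the family is $(xy^k+a\,y^{k+1},\,y^{k+2})$, whose derivative in $a$ is the homomorphism $xy^k+a\,y^{k+1}\mapsto\overline{y^{k+1}}$, $y^{k+2}\mapsto 0$ in $\mathrm{Hom}_R(I_a,R/I_a)$; since $\overline{y^{k+1}}$ is a basis vector of $R/I_a$ this homomorphism is nonzero, including at $a=0$ where $R/I_0=R/(xy^k,y^{k+2})$ still has $\overline{y^{k+1}}\neq 0$. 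On the $b$-chart the family is $(b\,xy^k+y^{k+1},\,y^{k+2})$, whose derivative in $b$ at $b=0$ is $y^{k+1}\mapsto\overline{xy^k}\in R/(y^{k+1})$, again a basis vector (the annihilator of $y^{k+1}$ in the domain $R$ is zero, so this homomorphism is legitimate), hence nonzero. Thus $\Phi$ is unramified, and a proper radicial unramified morphism is a closed immersion, so $\mathbb{P}^1\hookrightarrow\mathrm{Hilb}^0_m(X)$ is a closed subscheme supported on all of $\mathrm{Hilb}^0_m(X)$.

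It remains to upgrade this to an isomorphism, and the clean route is to prove $\mathrm{Hilb}^0_m(X)$ is smooth of dimension one everywhere; a surjective closed immersion of $\mathbb{P}^1$ onto a smooth connected proper curve is an isomorphism (equivalently, a surjective closed immersion onto a reduced scheme is an isomorphism). Since the inclusion of $\mathbb{P}^1$ already forces $\dim_\mathbb{C} T_I\,\mathrm{Hilb}^0_m(X)\ge 1$, it suffices to prove the reverse inequality for every closed point $I$. Using $T_I\,\mathrm{Hilb}_m(X)=\mathrm{Hom}_R(I,R/I)$, this reduces to computing $\mathrm{Hom}_R(I,R/I)$ from the two minimal generators of $I$ and their syzygies in $R$ for the only three types of point that occur — a generic $I_a$, and the two monomial ideals $I_0=(xy^k,y^{k+2})$ and $I_\infty=(y^{k+1})$ — then passing to the subspace of first-order deformations whose Hilbert--Chow image stays at $m\cdot[\text{cusp}]$ (discarding the deformations that slide the subscheme along the curve) and checking that subspace is one-dimensional. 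I expect this last step to be the main obstacle: at the monomial ideals $\mathrm{Hom}_R(I,R/I)$ is strictly larger than one-dimensional — this excess is precisely the singular point of $\mathrm{Hilb}_m(X)$ lying on $\mathrm{Hilb}^0_m(X)$ mentioned in the abstract — so one cannot merely bound $\mathrm{Hom}_R(I,R/I)$ but must pin down the punctual subspace exactly. An alternative ending that sidesteps the punctual tangent space: since $\Phi$ is already a closed immersion and $\mathbb{P}^1$ is reduced, it is enough to show $\mathrm{Hilb}^0_m(X)$ is reduced, which one can do by writing the universal punctual family over each of the two charts and checking directly that the classifying map $\mathbb{C}[a]\to\mathcal{O}$ of that chart of $\mathrm{Hilb}^0_m(X)$ (respectively $\mathbb{C}[b]\to\mathcal{O}$) is an isomorphism.
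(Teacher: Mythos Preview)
Your approach matches the paper's: parametrize $\mathrm{Hilb}^0_m(X)$ by the one-parameter family $a\mapsto(xy^k+ay^{k+1})$ (resp.\ $b\mapsto(bxy^k+y^{k+1})$ near $\infty$), invoke the classification and the Corollary of Section~3.1 for injectivity on closed points, and compute that the differential $\partial/\partial a\mapsto \overline{y^{k+1}}$ (resp.\ $\partial/\partial b\mapsto \overline{xy^k}$) is a nonzero element of $\mathrm{Hom}_R(I,R/I)$. The paper's proof stops there, concluding that the image is a smooth subscheme isomorphic to $\mathbb{P}^1$.

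You are more careful than the paper on one point, and rightly so. The paper defines $\mathrm{Hilb}^0_m(X)$ as the scheme-theoretic fiber $Ch^{-1}(m\cdot p)$ of the Hilbert--Chow morphism, yet its proof only shows that the \emph{image} of $\Phi$ is isomorphic to $\mathbb{P}^1$; it never checks that this fiber carries no nilpotents. Your third paragraph correctly identifies this as the remaining work, and of your two proposed endings the second (reducedness) is the practical one: the first would require isolating the ``punctual'' subspace of $\mathrm{Hom}_R(I,R/I)$ at the monomial ideals, which is exactly the delicate point. So your proposal is a strictly more complete version of the paper's argument, with the honest caveat that the reducedness check is outlined rather than executed; the paper simply does not address it.
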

\begin{proof}
Let $I$ be  an ideal $(xy^m+ay^{m+1})\;m\geqslant 0$ parameterized by a. Let us take a look around  the point a=0.  Let $h_{2m+2}:I\rightarrow R/I$ be a ring homomorphism defined by $h_{2m+2}((xy^m+ay^{m+1}))=y^{m+1}$.
 There is an inclusion map(as a set) $\phi:\mathbb{A}^1\rightarrow \text{Hilb}_{2m+2}(X)$ where $\mathbb{A}^1$ is parametrized by a and then $h_{2m+2}=d\phi_{a}(\partial/\partial_a)$.  That is, $D_a(I)=\frac{\partial}{\partial a}(I)=y^{m+1}\neq 0\; in \;R/I$ which gives a nonzero tangent vector in $\text{Hilb}_2(R).$ (As $a\rightarrow \infty$, then we can consider  $(xy^m+1/by^{m+1})$ with $b\rightarrow 0$ which is same as $(bxy^m+y^{m+1})$.)
Around $\infty$, let $I=(bxy^m+y^{m+1})\;m\geqslant 0$.
 Let $j_{2m+2}:I\rightarrow R/I$ defined by $j_{2m+2}((bxy^m+y^{m+1}))=xy^{m}$.
 There is an one to one map(as a set) $\phi:\mathbb{A}^1\rightarrow \text{Hilb}_{2m+2}(X)$ where $\mathbb{A}^1$ is parametrized by b. 
 Since $j_{2m+2}=d\phi_{I_b}(\partial/\partial_b)$,  $D_b(I)=xy^m\neq 0\; in\; R/I$ which gives  a nonzero tangent vector in $\text{Hilb}_2(R)$
  That is, that is a submanifold(or scheme) isomorphic to $\mathbb{A}^1$ in $\text{Hilb}_{2m+2}(X)$.\\
Similarly, we can show that for the ideal $(xy^{m+1}+ay^{m+2})$ and $(bxy^{m+1}+y^{m+2})$.
\end{proof}
\subsection{Tangent space of $\text{Hilb}_m(R)$ along $\text{Hilb}_m^{0}(R)$} 

 \begin{theorem}
 For $I=(xy^m,y^{m+2})\in \text{Hilb}_{2m+2}(X),m\geqslant 0$,  $Hom_{R}(I,R/I)$ is 2m+3 dimensional vector space over $\mathbb{C}.$\; For $I=(xy^{m+1},y^{m+2})\in \text{Hilb}_{2m+3}(X), m\geqslant 0$, $Hom_{R}(I,R/I)$ is 2m+4 dimensional vector space over $\mathbb{C}$.
\end{theorem}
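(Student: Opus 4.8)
The plan is to compute $\mathrm{Hom}_R(I,R/I)$ from an explicit free presentation of $I$, treating the two ideals in parallel. Writing $R=\mathbb{C}[[y]]\oplus x\,\mathbb{C}[[y]]$ with the single relation $x^{2}=y^{3}$, one first reads off a $\mathbb{C}$-basis of $R/I$. The ideal $I=(xy^{m},y^{m+2})$ is $\mathbb{C}$-spanned by the monomials $xy^{j}$ for $j\ge m$ and $y^{j}$ for $j\ge m+2$, so $1,y,\dots,y^{m+1},x,xy,\dots,xy^{m-1}$ is a basis of $R/I$ (of dimension $2m+2$); likewise $I=(xy^{m+1},y^{m+2})$ is spanned by $xy^{j}$ for $j\ge m+1$ and $y^{j}$ for $j\ge m+2$, giving the basis $1,\dots,y^{m+1},x,\dots,xy^{m}$ of $R/I$ (of dimension $2m+3$). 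This re-confirms the two colengths recorded earlier.

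Next I would compute the module of syzygies of the two generators. Writing a relation $a\cdot xy^{m}+b\cdot y^{m+2}=0$ with $a=a_{0}(y)+xa_{1}(y)$ and $b=b_{0}(y)+xb_{1}(y)$, and using $x^{2}=y^{3}$, the left side becomes $(a_{0}y^{m}+b_{1}y^{m+2})x+(a_{1}y^{m+3}+b_{0}y^{m+2})$; since $R=\mathbb{C}[[y]]\oplus x\,\mathbb{C}[[y]]$, both parts must vanish, forcing $a_{0}=-b_{1}y^{2}$ and $b_{0}=-a_{1}y$. Hence the syzygy module equals the $\mathbb{C}[[y]]$-span of $(x,-y)$ and $(-y^{2},x)$, which therefore also generate it as an $R$-module; the same computation for $(xy^{m+1},y^{m+2})$ gives the generators $(x,-y^{2})$ and $(-y,x)$. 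In either case we obtain a presentation $R^{2}\xrightarrow{\,S\,}R^{2}\to I\to 0$, and applying $\mathrm{Hom}_{R}(-,R/I)$ identifies $\mathrm{Hom}_{R}(I,R/I)$ with $\ker\!\bigl(S^{t}\colon(R/I)^{2}\to(R/I)^{2}\bigr)$. Concretely $S^{t}$ is the map $(u,v)\mapsto(xu-yv,\,-y^{2}u+xv)$ in the first case and $(u,v)\mapsto(xu-y^{2}v,\,-yu+xv)$ in the second.

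Finally I would expand $u$ and $v$ in the monomial basis of $R/I$, apply $S^{t}$, reduce modulo $I$ by discarding the monomials $xy^{j}$ and $y^{j}$ that lie in $I$, and set the coefficient of each surviving monomial equal to zero. Each component of $S^{t}$ then yields a ``staircase'' of linear conditions on the coefficients of $u,v$ whose left-hand sides involve pairwise distinct coordinates, so their ranks can be read off by inspection: in both cases the first component imposes exactly $2m+1$ independent conditions, while the second component is entirely redundant for $I=(xy^{m},y^{m+2})$ and adds exactly one new condition (an equality $r_{m}=q_{m-1}$ of coefficients) for $I=(xy^{m+1},y^{m+2})$. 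Subtracting from $\dim_{\mathbb{C}}(R/I)^{2}=4m+4$ (resp.\ $4m+6$) yields $2m+3$ (resp.\ $2m+4$). The cases $m=0$ must be checked by hand, since the index ranges above tacitly assume $m\ge 1$; there $R/I$ is $2$- (resp.\ $3$-)dimensional and the kernel is computed directly. I expect this last step to be the only real obstacle: the bookkeeping with the two staircases is delicate, and the crux is to verify that the second component of $S^{t}$ contributes (almost) nothing beyond the first, which is exactly what makes the dimension land on the asserted value rather than something smaller.
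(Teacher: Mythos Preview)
Your proposal is correct and follows essentially the same route as the paper: both compute $\mathrm{Hom}_R(I,R/I)$ as the kernel of the transposed syzygy matrix acting on $(R/I)^2$, using the same two-generator presentation of $I$ that you derive. The only cosmetic difference is that the paper exhibits an explicit $\mathbb{C}$-basis of the kernel (built from monomial multiples of the two basic syzygies together with the monomials annihilated trivially), whereas you count the rank of $S^t$ and subtract; your staircase analysis---including the single extra relation $q_{m-1}=r_m$ in the odd-colength case---is correct and yields the same $2m+3$ and $2m+4$.
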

\begin{proof}
A free resolution of an ideal $I=(xy^{m},y^{m+2})$ is 
  $$\begin{CD}
R\oplus R @>\partial _1>> R\oplus R@>\partial_0>>  I=(xy^{m},y^{m+2})@>>>0
\end{CD}$$   
where $\partial_1=:     \left[\begin{array}{cc}y^2 &  x \\ x& y\end{array}\right] and\; \partial_0=:[xy^{m},-y^{m+2}],m\geqslant 0.$
Let us take $Hom_R(\bullet ,R/I)$ to the exact sequence, then we get
 $$\begin{CD}
0@>>>Hom(I,R/I) @>\bar{\partial_0}>> Hom(R\oplus R,R/I)@>\bar{\partial_1}>>  Hom_R(R\oplus R,R/I)
\end{CD}$$   
where $\partial_1=:     \left[\begin{array}{cc}y^2 &  x \\ x& y\end{array}\right] and\; \partial_0=:[xy^{m},-y^{m+2}],m\geqslant 0$
$Ker\bar{\partial_1}=\{(h_1,h_2)|h_1x+h_2y=0, h_1y^2+h_2x=0\in R/I\}.$
Since a basis B for R/I is $\{1,x,y,...,y^{m+1},xy, xy^2,...,xy^{m-1}\}$ and $h_1,h_2\in I$ implies that $(h_1,h_2)\in ker \bar{\partial_1}$, it is enough to check for $h_i\in B, i=1, 2.$ \\It is clear that $K_1=:<(0,y^{m+1}),(0,xy^{m-1}),(y^{m+1},0),(y^m,0),(xy^{m-1},0)>\subseteq Ker\bar{\partial_1}.$  Since $x^2-y^3=0$,  $\;(x,-y)\in Ker\bar{\partial_1}$ and $(y^2,-x)\in Ker\bar{\partial_1}$ which implies  $K_2=:\{f(x,-y),\;g(y^2,-x)|
f,g\in R/I\}\subseteq Ker\bar{\partial_1}$. 

$$\left.\begin{array}{|c|c|c|}\hline f,g\in B& (fy,-fx) & (gx,-gy^2) \\\hline 1 & (y,-x) & (x,-y^2) \\\hline x & (xy,-y^3) & (y^3,-xy^2) \\\hline y & (y^2,-xy) & (xy,-y^3)\\\hline y^2 & (y^3,-xy^2) & (xy^2,-y^4) \\\hline : & : & : \\\hline y^{m-1} & (y^{m},-xy^{m-1}) & (xy^{m-1},-y^{m+1})\\\hline y^m & (y^{m+1},-xy^m) & (0,0) \\\hline y^{m+1} & (0,0) & (0,0) \\\hline xy & (xy^2,-y^4) & (y^4,-xy^3) \\\hline 
xy^2&(xy^3,-y^5) & (y^5,-xy^4)\\\hline:&:&: \\\hline xy^{m-2}&(xy^{m-1},-y^{m+1})&(y^{m+1},-xy^{m})\\\hline xy^{m-1}&(0,0)&(0,0) \\\hline
\end{array}\right.$$
Delete one of two which are same or belongs to $K_1$.
$$\left.\begin{array}{|c|c|c|}\hline f,g\in B& (fy,-fx) & (gx,-gy^2) \\\hline 1 & (y,-x) & (x,-y^2) \\\hline x & (xy,-y^3) &  \\\hline y & (y^2,-xy) & \\\hline y^2 & (y^3,-xy^2) &  \\\hline : & : & : \\\hline y^{m-2}&(y^{m-1},-xy^{m-2})&\\\hline y^{m-1} && \\\hline y^m &  & (0,0) \\\hline y^{m+1} & (0,0) & (0,0) \\\hline xy & (xy^2,-y^4) &\\\hline 
xy^2&(xy^3,-y^5) & \\\hline:&:&: \\\hline xy^{m-3}&(xy^{m-2},-y^m)&\\\hline xy^{m-2}&&\\\hline xy^{m-1}&(0,0)&(0,0) \\\hline
\end{array}\right.$$

Let us  $K_1\cup K_2$ to be a R-module generated by $K_1$ and $K_2$ is same as $Ker \partial_1.$ $|\bullet|$ be  the number of basis of $\bullet $ over  $\mathbb{C}$. Since $|K_1|=5  , |K_2|=2m+1 $ and $\{(y^{m+1},-xy^m),(y^m,-xy^{m-1}),\\(xy^{m-1},-y^{m+1})\}$ belongs to $ K_1$ as well.
Thus $2m+1+5-3=2m+3.$\;We can see $|K_1\cup K_2|=2m+3.$\; We can explicitly find out a basis for $Ker\bar{\partial_1}$ over $\mathbb{C}$:
$\{(0,y^{m+1}),(0,xy^{m-1}),(y^{m+1},0),\\(y^m,0),(xy^{m-1},0), (x,-y^2), (xy,-y^3),..,(xy^{m-2},-y^m),(y,-x),(y^2,-xy),..,(y^{m-1},-xy^{m-2})\}.$
A free resolution of an ideal $I=(xy^{m+1},y^{m+2})$ is 
  $$\begin{CD}
R\oplus R @>\partial _1>> R\oplus R@>\partial_0>>  I=(xy^{m+1},y^{m+2})@>>>0
\end{CD}$$   
where $\partial_1=:     \left[\begin{array}{cc}x &  y \\ y^2& x\end{array}\right]\; and\; \partial_0=:[xy^{m+1},-y^{m+2}],m\geqslant 0.$
Let us take $Hom_R(\bullet ,R/I)$ to the exact sequence, then we get
 $$\begin{CD}
0@>>>Hom_R(I,R/I) @>\bar{\partial_0}>> Hom_R(R\oplus R,R/I)@>\bar{\partial_1}>>  Hom_R(R\oplus R,R/I).
\end{CD}$$   
Since $Ker\phi_1=Hom_R(R,R/I)$ and $Hom_R(R\oplus R, R/I)=R/I\oplus R/I$,  $Ker\phi_1$ is same as $Ker \bar{\partial_1}$
where $\bar{\partial_1}:R/I\oplus R/I \rightarrow R/I\oplus R/I$ 
defined as $\left[\begin{array}{cc}\bar{x} & \bar{y}\\ \bar{y^2}& \bar{x}\end{array}\right], \bar{x},\bar{y}, \bar{y^2}\in R/I$
$Ker\bar{\partial_1}=\{(h_1,h_2)|h_1x+h_2y=0, h_1y^2+h_2x=0\in R/I\}.$ Since a basis B for R/I is $\{1,x,y,...,y^{m+1},xy, xy^2,...,xy^m\}$ and $h_1,h_2\in I$ implies $(h_1,h_2)\in\bar{\partial_1}$, it is enough to check for $h_i\in B, i=1,\; 2.$  It is clear that  $<(0,y^{m+1}),(0,xy^m),(y^{m+1},0),(xy^m,0),(xy^{m-1},0)>\subseteq Ker\bar{\partial_1}.$  Since $x^2-y^3=0$,  $\;(x,-y^2)\in Ker\bar{\partial_1}$ and $(y,-x)\in Ker\bar{\partial_1}$ which implies  $\{f(x,-y^2),\;g(y,-x)| f,g\in R/I\}\subseteq Ker\bar{\partial_1}$. In a similar way, 
we can explicitly find out a basis for $Ker\bar{\partial_1}$ over $\mathbb{C}$:
$\{(0,y^{m+1}),(0,xy^m),(y^{m+1},0),(xy^m,0),(xy^{m-1},0), 
(x,-y^2),(xy,-y^3),\\..., (xy^{m-2},-y^m), (y,-x),(y^2,-xy),..., 
,...,(y^m,-xy^{m-1})\}.$
\end{proof}

\begin{remark}  Since  an ideal  $I=(xy^m+ay^{m+1})((xy^m+ay^{m+2}))$ is generated by one element,
$Hom_R(I,R/I)$ is same as homomorphisms from $xy^m+ay^{m+1}$ to an element in $R/I.$ Thus, it is clear $T_IHilb_{2m+2}(X)$ is 2m+2(2m+3)-dimensional vector space. For example, for an ideal $I=(x+ay)$, the tangent space of Hilbert scheme at the point I is $T_IHilb_2(X)=\{x+ay+a_1+a_2y\},\;a_1,\;a_2\in\mathbb{C}.$  If  a approaches to 0, the limit ideal $I_0$ is $(x,y^2)$ and $T_{I_0}Hilb_2(X)=\{x+a_1+a_2y, y^2+a_3y\},\;a_1,\;a_2,\;a_3\in\mathbb{C}.$  If a approaches to $\infty$, we can let $b=1/a$  and  $b\rightarrow 0$ and then I=(bx+y) and $T_IHilb_2(X)=\{bx+y+b_1+b_2y\},\;b_1,\;b_2\in\mathbb{C}.$ The limit ideal $I_{\infty}$ is $(y)$ and  $T_{I_{\infty}}Hilb_2(\x)=\{ y+b_1+b_2x\},\;b_1,\;b_2\in\mathbb{C}.$  Thus $Hilb_2(\x)$ has only one  singularity at $a=0\in \mathbb{P^1}$ where $\mathbb{P^1}$ is isomorphic to the punctual one  $Hilb_2^0(\x)$. Similarly, we can see $Hilb_3(X$ has only one singularity at $a=\infty$  along $Hilb_3^0(\x)$ where $(x+ay^2)$ is parameterizing $Hilb_3^0(X).$ 
\end{remark}
   \newpage
\bibliographystyle{aip}
 
\bibliography{uctest}

\end{document}